\numberwithin{equation}{section} \allowdisplaybreaks
\begin{document}
\newtheorem{theorem}{Theorem}[section]
\newtheorem{defin}{Definition}[section]
\newtheorem{prop}{Proposition}[section]
\newtheorem{corol}{Corollary}[section]
\newtheorem{lemma}{Lemma}[section]
\newtheorem{rem}{Remark}[section]
\newtheorem{example}{Example}[section]
\title{Geometry on Big-Tangent Manifolds}
\author{{\small by}\vspace{2mm}\\Izu Vaisman}
\date{}
\maketitle
{\def\thefootnote{*}\footnotetext[1]%
{{\it 2000 Mathematics Subject Classification: 53C15, 53C80} .
\newline\indent{\it Key words and phrases}: Big-tangent manifold, Bott connection, Vertical metric, Double field.}}
\begin{center} \begin{minipage}{10cm}
A{\footnotesize BSTRACT. Motivated by generalized geometry, we discuss differential geometric structures on the total space $\mathfrak{T}M$of the bundle $TM\oplus T^*M$, where $M$ is a differentiable manifold; $\mathfrak{T}M$ is called a big-tangent manifold. The vertical leaves of the bundle are para-Hermitian vector spaces. The big-tangent manifolds are endowed with canonical presymplectic, Poisson and $2$-nilpotent structures. We discuss lifting processes from $M$ to $\mathfrak{T}M$. From the point of view of the theory of $G$-structures, the structure of a big-tangent manifold is equivalent with a suitable triple $(P,Q,S)$, where $P$ is a regular bivector field, $Q$ is a $2$-contravariant symmetric tensor field of the same rank as $P$ and $S$ is a $2$-nilpotent $(1,1)$-tensor field. The integrability conditions include the annulation of the Schouten-Nijenhuis bracket $[P,P]$,  the annulation of the Nijenhuis tensor $\mathcal{N}_S$ and conditions that connect between the three tensor fields. We discuss horizontal bundles and associated linear connections with the Bott property. Then, we discuss metrics on the vertical bundle that are compatible with the para-Hermitian metric of the leaves. Together with a horizontal bundle, such metrics may be seen as a generalization of the double fields of string theory with the role of double fields over a manifold. We define a canonical connection and the action functional of such a field.}
\end{minipage}
\end{center} \vspace*{5mm}
\section{Introduction}
The geometry of the total space of a tangent bundle is of considerable interest in differential geometry and it was the subject of many publications. In the more recent subject of {\it generalized geometry} introduced by Hitchin
\cite{Ht1}, which is of interest in theoretical physics (e.g., \cite{Zab}), the tangent bundle $TM$ of the
$m$-dimensional, differentiable manifold $M$ is replaced by the {\it big tangent
bundle} $\mathbf{T}M=TM\oplus T^*M$. Accordingly, we investigate the geometry of the total space of a big tangent bundle, called a big-tangent manifold, which we will denote by $\mathfrak{T}M$. The name big tangent bundle will refer to either the bundle structure or its total space as indicated by the context.

We recall that the bundle $ \mathbf{T}M$ has a non degenerate, neutral metric and a non degenerate $2$-form defined by
\begin{equation}\label{g}
g(\mathcal{X},\mathcal{Y})=\frac{1}{2}(\alpha(Y)+\mu(X)),\; \omega(\mathcal{X},\mathcal{Y})=\frac{1}{2}(\alpha(Y)-\mu(X)),
\end{equation} and the Courant bracket
\begin{equation}\label{Cbr}
[\mathcal{X},\mathcal{Y}]=([X,Y],L_X\mu-L_Y\alpha +\frac{1}{2}d(\alpha(Y)-\mu(X)),\end{equation}
where calligraphic characters denote pairs
$\mathcal{X}=(X,\alpha),\mathcal{Y}=(Y,\mu)$ with $X,Y$ either vectors or vector fields and $\alpha,\mu$ either covectors or $1$-forms. The structure group of $(\mathbf{T}M,g)$ is $O(m,m)$ and generalized geometric structures are defined as reductions of this structure group to various subgroups.

In the paper, we use the typical notation of differential geometry, e.g., like in \cite{KN}, and the reader may refer to \cite{Mol} for the encountered foliation theory notions. For the evaluation of exterior and symmetric products we use Cartan's convention
$$\begin{array}{l}
\alpha\wedge\beta(X,Y)=\alpha(X)\beta(Y)-\alpha(Y)\beta(X),
\vspace*{2mm}\\
\alpha\odot\beta(X,Y)=\alpha(X)\beta(Y)+\alpha(Y)\beta(X),
\end{array}$$
etc., without a factor $1/2$ in the right hand side.
Everything will be $C^\infty$-smooth (possibly, except along the zero section of a vector bundle).

In Section 2 we show that the leaves of the vertical foliation of a big-tangent manifold are para-Hermitian vector spaces and that the manifold has canonical presymplectic, Poisson, $2$-nilpotent and generalized $2$-nilpotent structures. Then, we define the vertical and complete lifts of vector fields from $M$ to $\mathfrak{T}M$, which are similar to those from $M$ to $TM$, and new lifts from $TM$ and $T^*M$ to $\mathfrak{T}M$.

In Section 3 we give the interpretation of the structure of $\mathfrak{T}M$ as a $G$-structure, equivalent to a suitable triple $(P,Q,S)$, where $P$ is a regular bivector field, $Q$ is a $2$-contravariant symmetric tensor field of the same rank as $P$ and $S$ is a $(1,1)$-tensor field with $S^2=0$. This leads to {\it almost big-tangent manifolds} and their integrable case the {\it big-tangent manifolds}. We establish the corresponding integrability conditions, which consist of the annulation of the Schouten-Nijenhuis bracket $[P,P]$, of the Nijenhuis tensor $\mathcal{N}_S$ and of the Lie derivatives of $S$ with respect to $P$-Hamiltonian vector fields.

In Section 4 we discuss horizontal bundles. We lift horizontal bundles of the usual tangent manifold to the big tangent manifold, particularly, those defined by regular Lagrangians. Then, we consider the linear connections that have the Bott property known from foliation theory and we show the main properties of the torsion and curvature of these connections.

In Section 5 we discuss metrics and vertical metrics on a big-tangent manifold. We review the canonical connection of foliation theory for the vertical foliation and its curvature properties. Vertical metrics that are compatible with the para-Hermitian metric of the leaves, together with a horizontal bundle, may be seen as a generalization of the notion of double field studied in string theory. We transfer to this case the para-Hermitian construction of an invariant action of the field given in \cite{VT}.
\section{The big tangent manifold}
The big tangent manifold $ \mathfrak{T}M$ has dimension $3m$, $m=dim\,M$.
The points of $ \mathfrak{T}M$ are triples $(x\in M,y\in T_xM,z\in T^*M)$ and one has natural local coordinates $(x^i,y^i,z_i)$, where $i=1,...,m$, $(x^i)$ are local coordinates on $M$, $(y^i)$ are vector coordinates and $(z_i)$ are covector coordinates. The corresponding coordinate transformations are:
\begin{equation}\label{coordtr} \tilde{x}^i=\tilde{x}^i(x^j),
\,\tilde{y}^i=\frac{\partial\tilde{x}^i}{\partial x^j}y^j,\,
\tilde{z}_i=\frac{\partial{x}^j}{\partial \tilde{x}^i}z_j.
\end{equation}
Notice the existence of the global function $ev(x,y,z)=z(y)=z_iy^i$, $ev\in C^\infty(\mathfrak{T}M)$, called the {\it evaluation function} (we use the Einstein summation convention overall).

On $\mathfrak{T}M$, a tangent vector $\mathfrak{X}$ and a $1$-form $\mathfrak{a}$ have the coordinate expressions
\begin{equation}\label{vect} \mathfrak{X}=\xi^i\frac{\partial}{\partial{x}^i}
+\eta^i\frac{\partial}{\partial{y}^i}
+\zeta_i\frac{\partial}{\partial{z}_i},\end{equation}
\begin{equation}\label{covect} \mathfrak{a}=\alpha_idx^i+\beta_idy^i+\gamma^idz_i,\end{equation}
and a coordinate transformation (\ref{coordtr}) implies the following change of vector and covector coordinates:
\begin{equation}\label{vectortr}
\tilde{\xi}^i=\frac{\partial\tilde{x}^i}{\partial x^j}\xi^j,\,
\tilde{\eta}^i=\frac{\partial\tilde{y}^i}{\partial x^j}\xi^j
+\frac{\partial\tilde{x}^i}{\partial x^j}\eta^j,\,
\tilde{\zeta}_i=\frac{\partial\tilde{z}_i}{\partial x^j}\xi^j
+\frac{\partial{x}^j}{\partial \tilde{x}^i}\zeta_j,
\end{equation}
$$ \tilde{\alpha}_i=\frac{\partial{x}^j}{\partial \tilde{x}^i}\alpha_j+\frac{\partial{y}^j}{\partial \tilde{x}^i}\beta_j +\frac{\partial{z}_j}{\partial \tilde{x}^i}\gamma^j,\, \tilde{\beta}_i=\frac{\partial{x}^j}{\partial \tilde{x}^i}\beta_j,\, \tilde{\gamma}^i=\frac{\partial\tilde{x}^i}{\partial {x}^j}\gamma^j.$$

The manifold $\mathfrak{T}M$ has the projections
$$ p:\mathfrak{T}M\rightarrow M,\,
p_1:\mathfrak{T}M\rightarrow TM,\,p_2:\mathfrak{T}M\rightarrow T^*M$$ on $M$ and on the total spaces of the tangent and cotangent bundle, respectively.

For any fiber bundle, the tangent bundle of the fibers is called vertical and, usually, denoted by $\mathcal{V}$. On $\mathfrak{T}M$, the vertical bundle has the decomposition
\begin{equation}\label{descV} \mathcal{V}\mathfrak{T}M=
\mathcal{V}_1\oplus\mathcal{V}_2,
\end{equation} where
$$\mathcal{V}_1=p_1^{-1}(
\mathcal{V}TM),\,\mathcal{V}_2=p_2^{-1}(\mathcal{V}T^*M)$$
and have the local bases $(\partial/\partial y^i)$, $(\partial/\partial z_i)$, respectively.
We also have the isomorphisms \begin{equation}\label{fiso} \mathcal{V}_1\approx p^{-1}(TM),\,\mathcal{V}_2\approx p^{-1}(T^*M),\,
\mathcal{V}\approx p^{-1}(\mathbf{T}M).\end{equation}

The subbundles $\mathcal{V}_1,\mathcal{V}_2$ are the foliations of $\mathfrak{T}M$ by the fibers of $p_2,p_1$, respectively, and $\mathfrak{T}M$ has a multi-foliate structure \cite{{KS},{V70}} that consists of the diagram of foliations
\begin{equation}\label{diagram}
\{0\}=\mathcal{V}_1\cap\mathcal{V}_2\begin{array}{c}
\nearrow\mathcal{V}_1
\searrow\vspace*{2mm}\\ \searrow \mathcal{V}_2\nearrow
\end{array}\mathcal{V}_1\oplus\mathcal{V}_2
=\mathcal{V}\subset T\mathfrak{T}M,\end{equation}
where the arrows are inclusions; it will be called the {\it vertical multi-foliation}.

We denote by $\chi(\mathfrak{T}M),\chi_v(\mathfrak{T}M), \chi_a(\mathfrak{T}M)$ $(a=1,2)$ the spaces of all vector fields and of vector fields in $\mathcal{V},\mathcal{V}_1,\mathcal{V}_2$, respectively. The fiber-wise infinitesimal homotheties, called {\it Euler vector fields} are given by
\begin{equation}\label{EVF}
\mathcal{E}_1=y^i\frac{\partial}{\partial{y}^i}\in \chi_1(\mathfrak{T}M),\,
\mathcal{E}_2=z_i\frac{\partial}{\partial{z}_i}\in \chi_2(\mathfrak{T}M),\,
\mathcal{E}=\mathcal{E}_1+\mathcal{E}_2\in \chi_v(\mathfrak{T}M).\end{equation}
On $\mathfrak{T}M\setminus\{0\}$, where $\{0\}$ denotes the zero section, $\mathcal{E}\neq0$
and $\mathcal{E}$ is a transversal vector field of the codimension-one foliation
defined by $ev=const.$
\begin{prop}\label{paraHpeV} The vertical leaves of $\mathfrak{T}M$ have a natural structure of para-Hermitian vector spaces.\end{prop}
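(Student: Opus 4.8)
The plan is to realize each vertical leaf as the fiber $\mathbf{T}_xM=T_xM\oplus T_x^*M$ and to equip it with the three pieces of data of a para-Hermitian structure: a neutral metric $g$, a para-complex involution $K$, and the compatibility relation between them. The vertical leaves are the connected fibers of $p:\mathfrak{T}M\to M$; the leaf over $x\in M$ is the vector space $\mathbf{T}_xM$, and the isomorphisms (\ref{fiso}) identify its tangent space with $\mathbf{T}_xM$ itself, the local bases $(\partial/\partial y^i)$ and $(\partial/\partial z_i)$ spanning $\mathcal{V}_1\approx T_xM$ and $\mathcal{V}_2\approx T_x^*M$. Recall that a para-Hermitian vector space is an even-dimensional space carrying a neutral metric $g$ and an endomorphism $K$ with $K^2=\mathrm{id}$, $K\neq\pm\mathrm{id}$, equal-dimensional $\pm1$-eigenspaces, and $g(K\cdot,K\cdot)=-g(\cdot,\cdot)$.

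For the metric I would take the restriction of the canonical neutral form $g$ of (\ref{g}): on vertical vectors $\mathfrak{X}_v\approx(X,\alpha)$, $\mathfrak{Y}_v\approx(Y,\mu)$ it reads $g(\mathfrak{X}_v,\mathfrak{Y}_v)=\tfrac12(\alpha(Y)+\mu(X))$, i.e. $\tfrac12(\zeta_i\sigma^i+\tau_i\eta^i)$ in coordinates, which is manifestly of neutral signature $(m,m)$. For the para-complex structure I would use the decomposition (\ref{descV}) and set $K=+\mathrm{id}$ on $\mathcal{V}_1$ and $K=-\mathrm{id}$ on $\mathcal{V}_2$, i.e. $K(X,\alpha)=(X,-\alpha)$; then $K^2=\mathrm{id}$ and the $\pm1$-eigenspaces are precisely $\mathcal{V}_1,\mathcal{V}_2$, each of dimension $m$, so $K$ is a genuine para-complex structure.

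The compatibility is then an immediate substitution into (\ref{g}): $g(K\mathfrak{X}_v,K\mathfrak{Y}_v)=\tfrac12((-\alpha)(Y)+(-\mu)(X))=-g(\mathfrak{X}_v,\mathfrak{Y}_v)$. I would also record that the fundamental $2$-form $g(\cdot,K\cdot)$ of this structure reproduces the canonical form $\omega$ of (\ref{g}), since $g(\mathfrak{X}_v,K\mathfrak{Y}_v)=\tfrac12(\alpha(Y)-\mu(X))=\omega(\mathfrak{X}_v,\mathfrak{Y}_v)$, tying the para-Hermitian data to the structures already carried by $\mathbf{T}M$.

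The only point needing genuine care — justifying the word \emph{natural} — is that $g$ and $K$ be defined independently of the chart, so that they are honest objects on the leaf rather than coordinate artifacts. The decisive observation is that, restricted to vertical vectors ($\xi^i=0$), the transformation laws (\ref{vectortr}) decouple: the $\eta$-components transform as a vector and the $\zeta$-components as a covector, with no cross terms, so the splitting $\mathcal{V}=\mathcal{V}_1\oplus\mathcal{V}_2$, and hence $K$, is chart-independent; moreover the pairing $\zeta_i\sigma^i+\tau_i\eta^i$ is invariant because $\partial x^j/\partial\tilde x^i$ and $\partial\tilde x^i/\partial x^k$ contract to $\delta^j_k$, so $g$ is chart-independent as well. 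Since these structures use only the canonical duality between $T_xM$ and $T_x^*M$ and do not vary along the leaf, $(g,K)$ is a well-defined para-Hermitian structure on every vertical leaf, as claimed. I expect this naturality verification to be the main, though routine, obstacle, the rest being linear algebra on a single fiber.
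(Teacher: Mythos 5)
Your proposal is correct and follows essentially the same route as the paper: both transfer the neutral metric $g$ (and $\omega$) of $\mathbf{T}M$ to $\mathcal{V}$ via the isomorphism $\mathcal{V}\approx p^{-1}(\mathbf{T}M)$ and take the para-complex involution to be the product structure determined by the splitting $\mathcal{V}=\mathcal{V}_1\oplus\mathcal{V}_2$ (your $K$ is the paper's $F_{\mathcal{V}}$), then check compatibility and that $\omega$ is the fundamental form. Your explicit verification of chart-independence and of the identities $g(K\cdot,K\cdot)=-g(\cdot,\cdot)$ and $g(\cdot,K\cdot)=\omega$ merely spells out what the paper leaves as ``easy to see.''
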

\begin{proof}  We refer to \cite{CFG} for para-Hermitian geometry.
The last isomorphism (\ref{fiso}) transfers the metric $g$ and $2$-form $\omega$ given by (\ref{g}) to a metric and a $2$-form on $\mathcal{V}$, also denoted by $g,\omega$, given by
$$ g(\mathfrak{X},\mathfrak{X}')= \frac{1}{2}(\zeta_i\eta^{'i}+\zeta'_i\eta^{i}),\;
\omega(\mathfrak{X},\mathfrak{X}')= \frac{1}{2}(\zeta_i\eta^{'i}-\zeta'_i\eta^{i})\;
(\mathfrak{X},\mathfrak{X}'\in\mathcal{V}),$$
where the coordinates are like in (\ref{vect})
(the coordinate characterization of $\mathcal{V}$ is $\xi^i=0$).
The musical isomorphism $\flat_{g}$ yields isomorphisms $\mathcal{V}_2\approx\mathcal{V}_1^*, \mathcal{V}_1\approx\mathcal{V}_2^*$. On the other hand,
(\ref{descV}) defines a product structure $F_{\mathcal{V}}$ on the leaves of $\mathcal{V}$, with $\mathcal{V}_1,\mathcal{V}_2$ as $\pm1$-eigenbundles and it is easy to see that $(g,F_{\mathcal{V}})$ is a para-Hermitian structure on the leaves with $\omega$ as fundamental form. Since the leaves of $\mathcal{V}$ are vector spaces, we are done.\end{proof}
\begin{prop}\label{structuri} The big tangent manifold $\mathfrak{T}M$ is endowed with canonical tensor fields $\varpi,P,Q,S$, where $\varpi$ is a presymplectic form of rank $2m$, $P$ is a Poisson bivector field, $Q$ is a symmetric $2$-contravariant tensor field and $S$ is a $2$-nilpotent structure. Furthermore, the triple $(S,P,Q)$ has the following properties:\\ 1)\hspace{5mm} $rank\,S=m,\;
ker\,S=im\,\sharp_P=im\,\sharp_Q$,\\ 2)\hspace{5mm} $\sharp_P\circ\flat_Q =\sharp_Q\circ\flat_P,\;	 \sharp_Q\circ\flat_P\circ S=-S$,\\
where flats are the inverses of the isomorphisms onto the image of sharps.
\end{prop}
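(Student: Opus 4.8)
The plan is to exhibit the four tensor fields explicitly in the natural coordinates $(x^i,y^i,z_i)$ and then verify each assertion. The natural candidates are $\varpi=dz_i\wedge dx^i$, the bivector $P=\partial/\partial y^i\wedge\partial/\partial z_i$, the symmetric field $Q=\partial/\partial y^i\odot\partial/\partial z_i$, and the $(1,1)$-tensor $S$ determined by $S(\partial/\partial x^i)=\partial/\partial y^i$ and $S(\partial/\partial y^i)=S(\partial/\partial z_i)=0$. The first task is to show that these are globally defined. For $\varpi$ this is immediate, since $\varpi=p_2^*\omega_0$ is the pull-back of the canonical symplectic form $\omega_0=dz_i\wedge dx^i$ of $T^*M$, hence intrinsic and closed. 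For $P$ and $Q$ I would avoid the coordinate grind by an intrinsic identification: by Proposition \ref{paraHpeV} the vertical bundle $\mathcal{V}$ carries the canonical nondegenerate $g$ and $\omega$ of (\ref{g}), and $P,Q$ are, up to normalization, the inverses of $\omega|_{\mathcal{V}}$ and $g|_{\mathcal{V}}$, regarded as elements of $\wedge^2\mathcal{V}\subset\wedge^2 T\mathfrak{T}M$ and $\mathrm{Sym}^2\mathcal{V}\subset\mathrm{Sym}^2 T\mathfrak{T}M$; since $\mathcal{V}$ and the pair $(g,\omega)$ are canonical, so are $P$ and $Q$. For $S$ I would use that the submersion $p$ gives a canonical identification $T\mathfrak{T}M/\mathcal{V}\simeq p^{-1}(TM)$, which, composed with the first isomorphism (\ref{fiso}), $p^{-1}(TM)\simeq\mathcal{V}_1$, and the inclusion $\mathcal{V}_1\hookrightarrow T\mathfrak{T}M$, produces exactly $S$; this makes $S$ manifestly coordinate-free.

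Next I would settle the structural statements. That $\varpi$ is presymplectic of rank $2m$ is clear: $d\varpi=0$ and its radical is $\mathcal{V}_1=\mathrm{span}(\partial/\partial y^i)$, of dimension $m$. That $Q$ is symmetric and $2$-contravariant is built into its definition, and $S^2=0$ because $\mathrm{im}\,S=\mathcal{V}_1\subset\mathcal{V}=\ker S$. For the Poisson property I would compute the Schouten--Nijenhuis bracket $[P,P]$: in every adapted chart $P$ has constant coefficients and is a sum of wedges of commuting coordinate fields, so $[P,P]=0$. This simultaneously yields Property 1: $\mathrm{rank}\,S=m$ and $\ker S=\mathcal{V}$ straight from the definition of $S$, while $\mathrm{im}\,\sharp_P=\mathrm{im}\,\sharp_Q=\mathcal{V}$ because $P$ and $Q$ are nondegenerate precisely on $\mathcal{V}$ and vanish in the transverse (the $dx^i$) directions; hence all three subspaces coincide with $\mathcal{V}$.

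For Property 2 I would first note that $\ker\sharp_P=\ker\sharp_Q=\mathrm{Ann}(\mathcal{V})=\mathrm{span}(dx^i)$, so the partial inverses $\flat_P,\flat_Q\colon\mathcal{V}\to T^*\mathfrak{T}M$ are determined up to this common kernel and the composites $\sharp_P\circ\flat_Q$ and $\sharp_Q\circ\flat_P$ are well-defined endomorphisms of $\mathcal{V}$. A short computation in the coordinate frame gives $\flat_P(\partial/\partial z_i)=dy^i$, $\flat_P(\partial/\partial y^i)=-dz_i$, and likewise for $\flat_Q$, whence both composites equal the product structure that is $+1$ on $\mathcal{V}_2$ and $-1$ on $\mathcal{V}_1$; in particular $\sharp_P\circ\flat_Q=\sharp_Q\circ\flat_P$. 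Since $\mathrm{im}\,S=\mathcal{V}_1$ is exactly the $(-1)$-eigenspace of this endomorphism and $S$ kills $\mathcal{V}=\ker S$, one obtains $\sharp_Q\circ\flat_P\circ S=-S$.

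The main obstacle I anticipate is not a single computation but the bookkeeping of global well-definedness under the transition rule (\ref{coordtr}): the nonlinear dependence of $\tilde{y}^i,\tilde{z}_i$ on the derivatives of $\tilde{x}^i$ produces inhomogeneous terms, so a naive coordinate verification of the invariance of $P,Q,S$ is delicate. Routing everything through the intrinsic descriptions above (the canonical $\mathcal{V}$ and its splitting, the para-Hermitian $(g,\omega)$ of Proposition \ref{paraHpeV}, and the canonical quotient $T\mathfrak{T}M/\mathcal{V}\simeq\mathcal{V}_1$) is precisely what makes those cancellations automatic. The only remaining point requiring care is fixing the sharp/flat sign conventions so that Property 2 delivers the eigenvalue $-1$ rather than $+1$, together with the observation that $\sharp_Q\circ\flat_P$ is independent of the chosen right inverse of $\sharp_P$, which is guaranteed by $\ker\sharp_P=\ker\sharp_Q$.
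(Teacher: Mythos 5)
Your proposal is correct and follows essentially the same route as the paper: the same local expressions, with $P$ and $Q$ obtained intrinsically as the (suitably normalized) inverses of $\omega|_{\mathcal{V}}$ and $g|_{\mathcal{V}}$ from Proposition \ref{paraHpeV}, $\varpi$ as $p_2^*$ of the canonical symplectic form, and Properties 1), 2) read off from the coordinate formulas --- indeed you supply more detail for 1) and 2) than the paper, which simply asserts they ``follow from the local expressions.'' The only real difference is the packaging of $S$: the paper defines $S=\sharp_P\circ\flat_\varpi$, whereas you realize it as the composite $T\mathfrak{T}M\rightarrow T\mathfrak{T}M/\mathcal{V}\simeq p^{-1}(TM)\simeq\mathcal{V}_1\hookrightarrow T\mathfrak{T}M$ (equivalently, $S\mathfrak{X}=(p_*\mathfrak{X})^v$, which the paper records in Remark \ref{obsv}); both yield $dx^i\otimes\partial/\partial y^i$, so nothing is lost. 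One omission to repair: in this paper a $2$-nilpotent \emph{structure} requires, besides $S^2=0$ and constant rank, the integrability condition $\mathcal{N}_S=0$ of (\ref{Nij}); you never check it. The check is immediate --- in the natural chart $S$ has constant coefficients and all brackets of coordinate fields vanish, so each term of $\mathcal{N}_S(\partial/\partial x^i,\partial/\partial x^j)$, etc., is zero --- but it is part of the claim and should be stated.
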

\begin{proof} We take $\varpi$ to be the pullback of the canonical symplectic form of $T^*M$ by $p_2$, i.e., locally,
$$\varpi=d\lambda=-dx^i\wedge dz_i,\; \lambda=z_idx^i;$$ $\varpi$ is closed and has rank $2m$.

The inverse of $2\omega$ along the para-Hermitian leaves of $\mathcal{V}$ yields the bivector field \begin{equation}\label{P} P=\frac{\partial}{\partial y^i}
\wedge\frac{\partial}{\partial z_i},\end{equation} which is a regular Poisson structure of rank $2m$. The symplectic leaves of $P$ are the leaves of the vertical foliation $\mathcal{V}$.

The inverse of $2g$ along the leaves of $\mathcal{V}$ yields the twice contravariant tensor field of rank $2m$
\begin{equation}\label{G} Q=\frac{\partial}{\partial y^i}
\odot\frac{\partial}{\partial z_i}.\end{equation}

Finally, we define
$$S=\sharp_P\circ\flat_\varpi=dx^i\otimes\frac{\partial}{\partial y^i}.$$
On the vector (\ref{vect}) we get
\begin{equation}\label{Snou} S\mathfrak{X}=\xi^i\frac{\partial}{\partial y^i},
\end{equation} hence, $rank\,S=m$. We also get
\begin{equation}\label{reltens} S^2=0,\, S\circ\sharp_P=0,\,\flat_\omega\circ S=0.\end{equation}
Recall that $S$ is a $2$-nilpotent structure if $S^2=0$, $rank\,S=const.$ and the integrability condition
\begin{equation}\label{Nij} \mathcal{N}_{S}(\mathfrak{X},\mathfrak{X}')=[S\mathfrak{X}, S\mathfrak{X}']-S([S\mathfrak{X},\mathfrak{X}']
+[\mathfrak{X},S\mathfrak{X}'])=0\end{equation}
holds, which happens in our case.

Properties 1) and 2) follow from the local expressions of the tensor fields. In properties 2) we refer to the isomorphisms $$\sharp_P:T^*\mathfrak{T}M/ker\,\sharp_P\approx
im\,\sharp_P,\;\sharp_Q:T^*\mathfrak{T}M/ker\,\sharp_Q\approx
im\,\sharp_Q,$$ the flats are the inverses of these isomorphisms and the composition makes sense because of the equality of the kernels.
\end{proof}
\begin{rem}\label{obsstr} {\rm	Properties 1) are equivalent with\\
$1'$)\hspace{5mm} $rank\,P=rank\,Q=2m,\;ker\,\sharp_P=ker\,\sharp_Q=im\,
\hspace{1pt}^t\hspace{-1pt}S$.\\
Notice the following relations $$L_{\mathcal{E}}\lambda=\lambda, \; L_{\mathcal{E}}\varpi=\varpi,\;	 \sharp_P\lambda=0,\; \sharp_Gd(ev)=\mathcal{E},\; L_{\mathcal{E}}P=-2P,\; L_{\mathcal{E}}Q=-2Q$$
and the $2$-contravariant tensor field
\begin{equation}\label{U} U=\frac{1}{2}(Q+P)=\frac{\partial}{\partial y^i}
\otimes\frac{\partial}{\partial z_i}.\end{equation}}\end{rem}
\begin{prop}\label{strgen} A big tangent manifold $\mathfrak{T}M$ is endowed with two canonical, generalized, $2$-nilpotent structures.\end{prop}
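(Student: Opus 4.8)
The plan is to exhibit explicitly the two generalized $2$-nilpotent structures on $\mathfrak{T}M$. Recall that a generalized structure lives on the big tangent bundle $\mathbf{T}(\mathfrak{T}M)=T\mathfrak{T}M\oplus T^*\mathfrak{T}M$ of the $3m$-dimensional manifold $\mathfrak{T}M$, i.e., it is an endomorphism $\mathcal{S}$ of $\mathbf{T}(\mathfrak{T}M)$ with $\mathcal{S}^2=0$, together with an integrability condition expressed via the Courant bracket (\ref{Cbr}), now taken on $\mathfrak{T}M$ itself. First I would assemble from the canonically given data of Proposition \ref{structuri} the ingredients of the correct bidegree: the $(1,1)$-tensor $S$ gives an endomorphism of $T\mathfrak{T}M$, its transpose $\,^tS$ acts on $T^*\mathfrak{T}M$, and the bivector $P$ and presymplectic form $\varpi$ supply maps $\sharp_P:T^*\mathfrak{T}M\to T\mathfrak{T}M$ and $\flat_\varpi:T\mathfrak{T}M\to T^*\mathfrak{T}M$. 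Writing an element of $\mathbf{T}(\mathfrak{T}M)$ as a pair $(\mathfrak{X},\mathfrak{a})$, I expect the two structures to be of the block form
\begin{equation}\label{genstr}
\mathcal{S}_1(\mathfrak{X},\mathfrak{a})=(S\mathfrak{X}+\sharp_P\mathfrak{a},\, -\,^tS\mathfrak{a}),\quad
\mathcal{S}_2(\mathfrak{X},\mathfrak{a})=(S\mathfrak{X},\, \flat_\varpi\mathfrak{X}-\,^tS\mathfrak{a}),
\end{equation}
the first built from the pair $(S,P)$ and the second from the pair $(S,\varpi)$, mirroring the classical generalized-complex dichotomy between a Poisson and a symplectic datum.

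The second step is to verify the algebraic condition $\mathcal{S}_a^2=0$. Using the block structure this reduces to a small number of identities among $S,\,^tS,\sharp_P,\flat_\varpi$, all of which are already recorded in (\ref{reltens}) and Proposition \ref{structuri}: namely $S^2=0$, $(^tS)^2=0$, $S\circ\sharp_P=0$ (hence also $^tS\circ\,^t(\sharp_P)$-type compatibilities), and $\flat_\varpi\circ S$ paired with $^tS\circ\flat_\varpi$. For $\mathcal{S}_1$, squaring the first slot produces $S^2+S\sharp_P\,\mathfrak{a}-\sharp_P\,^tS\mathfrak{a}$ and I would note that $S\circ\sharp_P=0$ from (\ref{reltens}) together with $\sharp_P\circ\,^tS=0$ (equivalently $^tS$ lands in $\ker\sharp_P=\mathrm{im}\,^tS$, cf.\ Remark \ref{obsstr}, $1'$) kills the cross terms; the analogous computation for $\mathcal{S}_2$ uses $\flat_\omega\circ S=0$ from (\ref{reltens}). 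These are pure linear-algebra checks done fiber-wise, most economically in the local frame $(dx^i,dy^i,dz_i)$ where $S=dx^i\otimes\partial/\partial y^i$, $P=\partial/\partial y^i\wedge\partial/\partial z_i$ and $\varpi=-dx^i\wedge dz_i$ have the explicit forms of Proposition \ref{structuri}.

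The third and genuinely substantive step is integrability. Here I would invoke the integrability data already established for the constituent tensors: $[P,P]=0$ (Proposition \ref{structuri} says $P$ is Poisson), $d\varpi=0$, and the vanishing of the Nijenhuis tensor $\mathcal{N}_S$ in (\ref{Nij}). The standard way to phrase generalized integrability is that the $\mathcal{S}_a$-eigen-subbundles (or, since $\mathcal{S}_a^2=0$, the images $\mathrm{im}\,\mathcal{S}_a\subset\mathbf{T}(\mathfrak{T}M)$) be closed under the Courant bracket. I expect that the mixed compatibility conditions required — the ones coupling $S$ with $P$ and with $\varpi$ — are exactly the Lie-derivative conditions $L_{\sharp_P\,\cdot}S=0$ along $P$-Hamiltonian directions announced in the integrability discussion of Section 3 of the Introduction, so that closure of $\mathrm{im}\,\mathcal{S}_a$ follows from $\mathcal{N}_S=0$, $[P,P]=0$ (resp.\ $d\varpi=0$) and these coupling identities. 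The main obstacle will be bookkeeping in the Courant-bracket computation: one must expand (\ref{Cbr}) on pairs of sections of $\mathrm{im}\,\mathcal{S}_a$ and show every term either vanishes or again lies in $\mathrm{im}\,\mathcal{S}_a$, and keeping the vector and covector slots correctly matched against the mixed $S$–$P$ and $S$–$\varpi$ terms is where sign errors and missed cross-terms are most likely. I would therefore carry out this verification in the explicit coordinate frame above, where the Courant bracket and all four tensors are completely explicit, reducing integrability to the already-known identities rather than to an abstract bracket manipulation.
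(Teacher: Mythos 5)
Your proposal matches the paper's proof essentially exactly: the two endomorphisms $\mathcal{S}_1,\mathcal{S}_2$ you write down are precisely the block matrices $S_P=\left(\begin{smallmatrix} S&\sharp_P\\ 0&-^t\hspace{-1pt}S\end{smallmatrix}\right)$ and $S_\varpi=\left(\begin{smallmatrix} S&0\\ \flat_\varpi&-^t\hspace{-1pt}S\end{smallmatrix}\right)$ of the paper, the nilpotency is likewise deduced from (\ref{reltens}), and the integrability is likewise settled by a check on the natural coordinate bases. The only cosmetic difference is that the paper phrases integrability as the vanishing of a Courant--Nijenhuis tensor (the analogue of (\ref{Nij}) with Courant brackets, per \cite{VD}) rather than as closure of $\mathrm{im}\,\mathcal{S}_a$ under the Courant bracket, but both reduce to the same coordinate verification.
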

\begin{proof} We refer to \cite{VD} for the notion of a generalized $2$-nilpotent structure. The structures required by the proposition are given by the endomorphisms
$S_P,S_\varpi:\mathbf{T}\mathfrak{T}M\rightarrow \mathbf{T}\mathfrak{T}M$ with the matrix representation
$$ S_P=\left(\begin{array}{cc} S&\sharp_P\vspace{2mm}\\
0&-^t\hspace{-1pt}S\end{array}\right),\;
S_\varpi=\left(\begin{array}{cc} S&0\vspace{2mm}\\
\flat_\varpi&-^t\hspace{-1pt}S\end{array}\right),$$
where the index $t$ denotes transposition.
Properties (\ref{reltens}) show that
these are generalized almost $2$-nilpotent structures. The structures are integrable since the Courant-Nijenhuis tensors of $S_P,S_\varpi$, which are defined by a formula similar to (\ref{Nij}) where the arguments are cross sections of $\mathbf{T}\mathfrak{T}M$, the brackets are Courant brackets and $S$ is replaced by $S_P,S_\varpi$, vanish (it suffices to check on the natural bases).\end{proof}

Now, we will discuss lifting procedures that extend those used for usual tangent bundles. A cross section $$(X=\xi^i\frac{\partial}{\partial x^i},\alpha=\alpha_idx^i)\in \Gamma\mathbf{T}M$$ has a {\it generalized moment} defined by
\begin{equation}\label{linfunct}
l_{(X,\alpha)}(x,y,z)=\alpha(y)+z(X)=\alpha_iy^i+z_i\xi^i,
\end{equation}
which together with the functions $p^*f$ ($f\in C^\infty(M)$) functionally generate $C^\infty(\mathfrak{T}M)$.
\begin{prop}\label{propliftv} For any point $x\in M$, there exists a canonical isomorphism $v:\mathbf{T}_xM\rightarrow\mathcal{V}_{(x,y,z)}\subset T_{(x,y,z)}(\mathfrak{T}M)$.\end{prop}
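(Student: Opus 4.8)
The plan is to construct $v$ as the standard vertical lift associated with the vector bundle structure of $\mathbf{T}M\rightarrow M$. Since the fiber $p^{-1}(x)=\mathbf{T}_xM$ is itself a vector space, for a fixed base point $(x,y,z)$ and an element $(X,\alpha)\in\mathbf{T}_xM$ I would set
$$v(X,\alpha)f=\frac{d}{dt}\Big|_{t=0}f\big(x,\,y+tX,\,z+t\alpha\big),\qquad f\in C^\infty(\mathfrak{T}M),$$
which is well defined because $y,X\in T_xM$ and $z,\alpha\in T_x^*M$ lie in the same fibers, so the affine curve $t\mapsto(x,y+tX,z+t\alpha)$ stays inside $p^{-1}(x)$ and passes through $(x,y,z)$ at $t=0$.

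First I would compute $v$ in the natural coordinates (\ref{vect}). Writing $X=\xi^i\partial/\partial x^i$ and $\alpha=\alpha_idx^i$, differentiation of $f(x^i,y^i+t\xi^i,z_i+t\alpha_i)$ at $t=0$ yields
$$v(X,\alpha)=\xi^i\frac{\partial}{\partial y^i}+\alpha_i\frac{\partial}{\partial z_i}.$$
In particular the image carries no $\partial/\partial x^i$ component, so $v(X,\alpha)\in\mathcal{V}_{(x,y,z)}$, with $\xi^i\partial/\partial y^i\in\mathcal{V}_1$ and $\alpha_i\partial/\partial z_i\in\mathcal{V}_2$, in accordance with the decomposition (\ref{descV}).

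Next I would verify that $v$ is a linear isomorphism. Linearity is immediate from the defining formula. Since $v$ sends the basis $\{(\partial/\partial x^i|_x,0),\,(0,dx^i|_x)\}$ of $\mathbf{T}_xM$ to the basis $\{\partial/\partial y^i,\,\partial/\partial z_i\}$ of $\mathcal{V}_{(x,y,z)}$, and both spaces have dimension $2m$, the map is bijective, hence an isomorphism.

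The one point requiring care — and the only place where anything could fail — is canonicity, that is, the independence of $v$ from the chosen chart on $M$. I would check this against the transformation rules (\ref{vectortr}): restricting to $\xi^i=0$ (the vertical locus), those rules reduce to $\tilde\eta^i=(\partial\tilde x^i/\partial x^j)\eta^j$ and $\tilde\zeta_i=(\partial x^j/\partial\tilde x^i)\zeta_j$, so the components along $\partial/\partial y^i$ transform contravariantly and those along $\partial/\partial z_i$ covariantly. This is exactly how the components $(\xi^i,\alpha_i)$ of $(X,\alpha)\in\mathbf{T}_xM$ transform, whence the assignment $(\xi^i,\alpha_i)\mapsto(\xi^i,\alpha_i)$ is coordinate-independent and $v$ is intrinsic. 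Equivalently, $v$ is just the fiberwise realization at $(x,y,z)$ of the last isomorphism in (\ref{fiso}), $\mathcal{V}\approx p^{-1}(\mathbf{T}M)$, which already secures the claim.
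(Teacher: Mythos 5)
Your proof is correct, and it reaches the same map as the paper by a slightly different route. The paper defines $v(X,\alpha)=(X^v,\alpha^v)$ by prescribing its directional derivatives on a functionally generating family: $(X^v,\alpha^v)(p^*f)=0$ and $(X^v,\alpha^v)(l_{(Y,\beta)})=\alpha(Y)+\beta(X)$, where $l_{(Y,\beta)}$ is the generalized moment (\ref{linfunct}); it then reads off the same local expression $\xi^i\partial/\partial y^i+\alpha_i\partial/\partial z_i$ and concludes. You instead use the tangent vector at $t=0$ of the affine curve $t\mapsto(x,y+tX,z+t\alpha)$ inside the fiber, which is the classical curve-based definition of the vertical lift for a vector bundle. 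The two definitions agree (one checks that your vector field applied to $l_{(Y,\beta)}=\beta_iy^i+z_i\eta^i$ gives exactly $\alpha(Y)+\beta(X)$), and each has its advantage: your version makes canonicity transparent without any coordinate computation, since the curve is defined purely from the linear structure of the fiber $\mathbf{T}_xM$, whereas the paper's version, phrased through $p^*f$ and $l_{(Y,\beta)}$, is the form that is reused verbatim to define the complete lift in (\ref{liftc2}) and the extended lifts later on. Your closing remark identifying $v$ with the fiberwise form of the last isomorphism in (\ref{fiso}) is also consistent with how the paper uses the vertical lift afterwards.
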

\begin{proof}
The vector $v(X,\alpha)$, also denoted $(X^v,\alpha^v)$, will be called the {\it vertical lift}, and it is defined by the directional derivatives $$(X^v,\alpha^v)(p^*f)=0,\,(X^v,\alpha^v)(l_{(Y,\beta)})=
\alpha(Y)+\beta(X).$$ In local coordinates we get
$$ (X^v,\alpha^v)=\xi^i\frac{\partial}{\partial y^i}+\alpha_i \frac{\partial}{\partial z_i}.$$
Obviously, the vertical lift is an isomorphism onto the vertical space.\end{proof}
\begin{rem}\label{obsv} {\rm 1. We will denote $(X^v,0^v)=X^v,(0^v,\alpha^v)=\alpha^v$. 2. The vertical lift satisfies the property
$$ [(X^v,\alpha^v),(Y^v,\beta^v)]=0.$$ 3. Using the vertical lift we get $S\mathfrak{X}=(p_*\mathfrak{X})^v$.
4. The vertical lift leads to push-forward homomorphisms
$$ q':T_{(x,y,z)}^*\mathfrak{T}M\rightarrow T_xM,\,q'':T_{(x,y,z)}^*\mathfrak{T}M\rightarrow T_x^*M,$$
where
$$ \alpha(q'\mathfrak{a})=
\mathfrak{a}(\alpha^v),\, q''\mathfrak{a}(X)=\mathfrak{a}(X^v),$$
$X\in T_xM,\alpha\in T^*_xM$. For $\mathfrak{a}$ given by (\ref{covect}), we have
$$ q'\mathfrak{a}=\gamma^i
\frac{\partial}{\partial x^i},\,
q''\mathfrak{a}=\beta_idx^i,$$ and the following relations hold
$$\sharp_G\mathfrak{a}=(q'\mathfrak{a})^v+(q''\mathfrak{a})^v,
(q''\mathfrak{a})^v=U(\mathfrak{a}),$$
where $U$ is (\ref{U}) seen as the homomorphism $\mathfrak{a}\mapsto i(\mathfrak{a})U$ (contraction on the first index of $U$). 5. The vertical lift extends to tensors like in the case of the usual tangent bundle, but, here, it always leads to completely contravariant tensors.}\end{rem}
\begin{prop}\label{propliftc} For any manifold $M$, there exists a canonical injection $c:\chi(M)\rightarrow\chi(\mathfrak{T}M)$, which is compatible with the flows.\end{prop}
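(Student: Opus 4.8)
The plan is to define the injection $c$ as the \emph{complete lift} $X\mapsto X^c$, built from the flow of $X$ in exact analogy with the tangent-bundle case. If $\phi_t$ denotes the (local) flow of $X$ on $M$, then each $\phi_t$ lifts naturally to a diffeomorphism $\Phi_t$ of $\mathfrak{T}M=TM\oplus T^*M$ by acting on the tangent part through the tangent map $d\phi_t$ and on the cotangent part through the inverse-transpose $(d\phi_{-t})^*$; this is precisely the action preserving the pairing $z(y)$, hence the evaluation function $ev$. I would then set $X^c:=\frac{d}{dt}\big|_{t=0}\Phi_t$. Because the tangent and cotangent functors are functorial, $t\mapsto\Phi_t$ is a (local) one-parameter group, so $X^c$ is a genuine vector field whose flow is $\Phi_t$; compatibility with the flows is thereby built into the construction, and the construction is manifestly canonical.

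First I would record the local expression. Writing $X=\xi^i\,\partial/\partial x^i$ and differentiating the lifted flow gives
\begin{equation*}
X^c=\xi^i\frac{\partial}{\partial x^i}
+\frac{\partial\xi^i}{\partial x^j}\,y^j\frac{\partial}{\partial y^i}
-\frac{\partial\xi^j}{\partial x^i}\,z_j\frac{\partial}{\partial z_i}.
\end{equation*}
A direct check against (\ref{vectortr}) shows the right-hand side transforms as the components of a vector field, so $X^c\in\chi(\mathfrak{T}M)$ is globally well defined; this is automatic once one notes it arises from an honest flow. The same expression gives $p_*X^c=X$, so $X^c=0$ forces $X=0$, and together with the evident $\mathbb{R}$-linearity of $X\mapsto X^c$ this yields that $c$ is injective.

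An equivalent and more invariant characterization is obtained on the functional generators: one verifies that $X^c(p^*f)=p^*(Xf)$ and $X^c\big(l_{(Y,\beta)}\big)=l_{([X,Y],\,L_X\beta)}$, the second identity following by expanding $l_{(Y,\beta)}=\beta_iy^i+z_i\eta^i$ and using $(L_X\beta)_i=\xi^k\partial_k\beta_i+\beta_j\partial_i\xi^j$ together with $[X,Y]^i=\xi^k\partial_k\eta^i-\eta^k\partial_k\xi^i$. Since $p^*f$ and $l_{(Y,\beta)}$ functionally generate $C^\infty(\mathfrak{T}M)$, these two relations determine $X^c$ uniquely. The step needing the most care is the \emph{consistency} of this invariant definition, because the generators are not free: they satisfy the module relations $l_{(fY,f\beta)}=(p^*f)\,l_{(Y,\beta)}$, so one must confirm that the prescribed derivation respects them under the Leibniz rule, which reduces to the identities $[X,fY]=f[X,Y]+(Xf)Y$ and the derivation property of $L_X$. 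Adopting the flow definition as primary makes this consistency automatic, leaving only the smoothness and one-parameter-group property of $\Phi_t$, both consequences of the functoriality of the tangent and cotangent lifts.
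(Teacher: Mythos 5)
Your proposal is correct and arrives at exactly the same object as the paper: the local expression you write for $X^c$ is precisely the paper's formula (\ref{liftc}), and your generator identities $X^c(p^*f)=p^*(Xf)$, $X^c(l_{(Y,\beta)})=l_{([X,Y],L_X\beta)}$ subsume the paper's defining conditions (\ref{liftc2}), since $l_{df}$ corresponds to the pair $(0,df)$ and $L_X(df)=d(Xf)$. The only real difference is the order of logic. The paper takes the directional derivatives on the generators $p^*f,\,l_{df},\,l_Y$ as the \emph{definition}, reads off the coordinate formula, and then asserts flow compatibility by ``calculations similar to those for the tangent bundle,'' using the lift of a diffeomorphism by $\Phi_*$ on $y$ and $\Phi^{-1*}$ on $z$. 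You take the lifted flow $\Phi_t$ as the definition and derive the generator identities afterwards. Your ordering buys two things for free: well-definedness (no need to worry that the generators satisfy module relations such as $l_{(fY,f\beta)}=(p^*f)\,l_{(Y,\beta)}$, a consistency point the paper does not address explicitly) and flow compatibility, which is built in rather than verified a posteriori. What the paper's ordering buys is that the generator characterization is immediately available for the subsequent computations (Proposition \ref{proplc} and the extended lifts of Definitions \ref{extended1}--\ref{extended2} are modeled on it). Both arguments are complete; yours is arguably the cleaner derivation of the same result.
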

\begin{proof} Like in the case of $TM$, we define the required injection $c(X)=X^c$, called {\it complete lift}, by the directional derivatives
\begin{equation}\label{liftc2} X^c(p^*f)=p^*(Xf),\,
X^c(l_{df})=l_{d(Xf)},\,X^c(l_Y)=l_{[X,Y]},\end{equation}
where $f\in C^\infty(M)$ and $X,Y\in\chi(M)$.
By means of local coordinates, if $X=\xi^i(\partial/\partial x^i)$, then,
\begin{equation}\label{liftc} X^c=\xi^i\frac{\partial}{\partial x^i}+y^j\frac{\partial\xi^i}{\partial x^j}\frac{\partial}{\partial y^i}-z_j\frac{\partial\xi^j}{\partial x^i}\frac{\partial}{\partial z_i}.\end{equation}
This formula shows that $c$ is an injection and that $X^c$ is projectable by the three projections $p,p_1,p_2$.

A diffeomorphism $\Phi$ of $M$ lifts to $\mathfrak{T}M$ acting by $\Phi_*$ on $y$ and by $\Phi^{-1*}$ on $z$, and a left action of a group $G$ on $M$ lifts to a left action on $\mathfrak{T}M$. Calculations similar to those for the tangent bundle show that the flow $exp(tX)$ lifts to the flow $exp(tX^c)$, where $X^c$ is given by (\ref{liftc}), which is the meaning of flow compatibility.\end{proof}
\begin{prop}\label{proplc} The complete lift has the following properties:
\begin{equation}\label{propc} \begin{array}{c}
g(X^c,\alpha^v)=\frac{1}{2}
(\alpha(X))^v,\,[X^c,Y^v]=[X,Y]^v,\,[X^c,Y^c]=[X,Y]^c,\vspace*{2mm}\\
(fX)^c=f^vX^c+l_{df}X^v-l_X(df)^v.\end{array}\end{equation}
\end{prop}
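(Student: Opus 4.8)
The plan is to verify each of the four identities in (\ref{propc}) directly from the coordinate expression (\ref{liftc}) for the complete lift together with the definitions of the vertical lift, the metric $g$, and the bracket relations already established. Since all the objects involved are either defined by local coordinate formulas or characterized by their action on the generating functions $p^*f$ and $l_{(Y,\beta)}$, the natural strategy is computational: translate each claim into coordinates and compare both sides.

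First I would treat $g(X^c,\alpha^v)$. Recall from Proposition \ref{paraHpeV} that on $\mathcal{V}$ the metric is $g(\mathfrak{X},\mathfrak{X}')=\frac{1}{2}(\zeta_i\eta^{'i}+\zeta'_i\eta^{i})$; since $g$ lives on the vertical bundle, only the vertical parts of the arguments contribute. From (\ref{liftc}) the vertical part of $X^c$ has $\eta^i=y^j\partial\xi^i/\partial x^j$ and $\zeta_i=-z_j\partial\xi^j/\partial x^i$, while $\alpha^v=\alpha_i\,\partial/\partial z_i$ has $\eta^i=0$, $\zeta_i=\alpha_i$. Substituting into the formula for $g$ collapses one term and yields $\frac{1}{2}\alpha_i y^j(\partial\xi^i/\partial x^j)$, which is exactly $\frac{1}{2}(\alpha(X))^v$ since $\alpha(X)=\alpha_i\xi^i$ and the vertical lift of a function on $M$ differentiates it along $y$. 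This is routine once the vertical parts are isolated.

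For the two bracket identities $[X^c,Y^v]=[X,Y]^v$ and $[X^c,Y^c]=[X,Y]^c$, I would use the functional characterization rather than grinding out coordinate Lie brackets: the functions $p^*f$ and $l_{(Y,\beta)}$ functionally generate $C^\infty(\mathfrak{T}M)$ (noted after (\ref{linfunct})), so it suffices to check that both sides of each identity act identically on these generators. For $[X^c,Y^c]$ one applies the defining relations (\ref{liftc2}) twice and invokes the Jacobi identity on $M$ together with $[X,[Y,Z]]$-type expansions; the key inputs are $X^c(l_Z)=l_{[X,Z]}$ and $X^c(p^*f)=p^*(Xf)$, from which $[X^c,Y^c]$ acting on $l_Z$ returns $l_{[[X,Y],Z]}$, matching $[X,Y]^c$. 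The mixed bracket $[X^c,Y^v]$ is handled analogously, using $X^v$'s characterization from Proposition \ref{propliftv} and the vanishing $X^v(p^*f)=0$. I expect this functional approach to be cleaner than coordinates and to be the conceptual heart of the verification.

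The last identity $(fX)^c=f^vX^c+l_{df}X^v-l_X(df)^v$ is the Leibniz-type relation and will be the main obstacle, since it mixes all three lift types and the scalar functions $f^v=p^*f$, $l_{df}$, and $l_X$ in a way that does not factor cleanly. Here I would again test both sides on the generators $p^*h$ and $l_{(Y,\beta)}$, carefully expanding $(fX)f'=f(Xf')$ and tracking how the complete lift of a product behaves; the appearance of $l_{df}$ and $(df)^v$ encodes the failure of $c$ to be $C^\infty(M)$-linear, exactly as in the classical tangent-bundle case. The delicate point is bookkeeping the three different function-coefficients and confirming that the correction terms $l_{df}X^v-l_X(df)^v$ precisely cancel the non-tensorial contributions from differentiating $f$; I would verify this on $l_Y$ where the term $X^c(l_Y)=l_{[X,Y]}$ interacts with $(fX)^c(l_Y)=l_{[fX,Y]}=l_{f[X,Y]-(Yf)X}$, and check that the right-hand side reproduces the same expression after evaluating each piece. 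This component identity is the one requiring the most care, but it remains a finite check on generators rather than a genuine difficulty.
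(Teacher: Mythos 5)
Your overall strategy --- direct verification from the defining relations (\ref{liftc2}) and the coordinate formula (\ref{liftc}) --- is exactly what the paper's one-line proof prescribes, and your treatment of the two bracket identities and of the Leibniz-type formula is sound. (For the mixed bracket you will also want $X^c(l_{(0,\gamma)})=l_{(0,L_X\gamma)}$ for a non-exact $1$-form $\gamma$, which is not literally among the relations (\ref{liftc2}) but follows from (\ref{liftc}) or from writing $l_{(0,\gamma)}=\sum_i (p^*\gamma_i)\, l_{dx^i}$ locally; this is a minor addition.)

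The genuine problem is your verification of the first identity. You take ``the vertical part'' of $X^c$ to be $y^j(\partial\xi^i/\partial x^j)\,\partial/\partial y^i-z_j(\partial\xi^j/\partial x^i)\,\partial/\partial z_i$, i.e.\ you project along $\mathrm{span}\{\partial/\partial x^i\}$. This complement of $\mathcal{V}$ is not invariant (see (\ref{vectortr})), and indeed the scalar you obtain, $\tfrac12\alpha_i y^j\partial\xi^i/\partial x^j$, is not coordinate-independent: under (\ref{coordtr}) it acquires a term involving the second derivatives of $\tilde{x}^i$, so it cannot equal any invariantly defined right-hand side. Moreover, your identification of it with $\tfrac12(\alpha(X))^v$ rests on the convention that the vertical lift of a function $h$ is $y^j\partial h/\partial x^j$; besides failing to reproduce your own computed expression (that convention would add the term $\tfrac12 y^j\xi^i\partial\alpha_i/\partial x^j$), it is incompatible with the fourth identity, whose $\partial/\partial x^i$-component forces $f^v=p^*f$. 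With $f^v=p^*f$ the first identity asserts that the left-hand side equals $\tfrac12\alpha_i\xi^i$, and the canonical way to extract $\xi^i$ from $X^c$ is through $p_*$, i.e.\ through $SX^c=(p_*X^c)^v=X^v$ (Remark \ref{obsv}). The identity should therefore be checked in the form $g(X^v,\alpha^v)=\tfrac12(\alpha(X))^v$, where it is immediate from $g(\mathfrak{X},\mathfrak{X}')=\tfrac12(\zeta_i\eta^{\prime i}+\zeta'_i\eta^i)$; as written, $g(X^c,\alpha^v)$ requires this interpretation (or the choice of a horizontal bundle) before it even makes sense, and your computation silently makes a non-invariant choice and then misreads the answer.
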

\begin{proof} Use the formulas (\ref{liftc2}) and (\ref{liftc}).\end{proof}
\begin{rem}\label{obslc} {\rm The last relation (\ref{propc}) tells us that the complete lift is not $C^\infty(M)$-linear, therefore, it does not extend well to higher order contravariant tensor fields. On the other hand, if needed, we may define complete lifts of covariant tensor fields as pullbacks of the complete lifts to $TM$ by the projection $p_1$.}\end{rem}

The defining formulas (\ref{liftc2}) of the complete lift suggest natural lifts with respect to the projections $p_1,p_2$. Let
$$\mathcal{X}=\xi^i(x)\frac{\partial}{\partial x^i}+\eta^i(x,y)\frac{\partial}{\partial y^i}$$
be a vector field on the total space of the bundle $TM$ that is projectable with
respect to the vertical foliation. The space of functions $C^\infty(\mathfrak{T}M)$ may be seen as locally, functionally spanned by functions of the type $p_1^*\varphi,l_{\mathcal{Y}}$ where
$\varphi\in C^\infty(TM)$, $\mathcal{Y}=\lambda^k(\partial/\partial y^k)$ is a vertical vector field on $TM$ and $l_{\mathcal{Y}}=z_k\lambda^k$.
\begin{defin}\label{extended1} {\rm The {\it extended lift} $\mathcal{X}^e$ of $\mathcal{X}$ to $\mathfrak{T}M$ is the vector field defined by the directional derivatives
 \begin{equation}\label{extend1}\mathcal{X}^e(p_1^*\varphi)=
 p_1^*(\mathcal{X}\varphi),\; \mathcal{X}^e(l_{\mathcal{Y}})=l_{[\mathcal{X},\mathcal{Y}]}
 \hspace{2mm}(\varphi\in C^\infty(TM)).
\end{equation}}\end{defin}

Conditions (\ref{extend1}) are correct since $\mathcal{X}$ projectable implies that the bracket $[\mathcal{X},\mathcal{Y}]$ is vertical.
From (\ref{extend1}) we get the local expression
\begin{equation}\label{extend2}\mathcal{X}^e= \xi^i\frac{\partial}{\partial x^i}+\eta^i\frac{\partial}{\partial y^i}-z_j\frac{\partial\eta^j}{\partial y^i}
\frac{\partial}{\partial z_i}.\end{equation}

Similarly, if
$$\mathcal{U}=\xi^i(x)\frac{\partial}{\partial x^i}+\zeta_i(x,z)\frac{\partial}{\partial z_i}$$ is a vertically projectable vector field on the manifold $T^*M$ and if we use functions $l_{\mathcal{Z}}=y^i\theta_i$ where $\mathcal{Z}=\theta_i(\partial/\partial z_i)$, we define
 \begin{defin}\label{extended2} {\rm The {\it extended lift} $\mathcal{U}^e$ of $\mathcal{U}$ to $\mathfrak{T}M$ is the vector field defined by the directional derivatives
$$ \mathcal{U}^e(p_2^*\varphi)=
 p_2^*(\mathcal{U}\varphi),\; \mathcal{U}^e(l_{\mathcal{Z}})=l_{[\mathcal{U},\mathcal{Z}]}
 \hspace{2mm}(\varphi\in C^\infty(T^*M)).$$}\end{defin}
The extended lift to $\mathfrak{T}M$ is given by
\begin{equation}\label{extend3} \mathcal{U}^e=\xi^i\frac{\partial}{\partial x^i}-z_j\frac{\partial\zeta_j}{\partial z_i}\frac{\partial}{\partial y^i}+\zeta_i\frac{\partial}{\partial z_i}.\end{equation}
\begin{rem}\label{obsce} {\rm
The extended lift of the complete lift of a vector field of $M$ to $TM$ is the complete lift to $\mathfrak{T}M$.}\end{rem}
\begin{rem}\label{obseliftgen} {\rm The extended lift may be extended to a lift of a projectable cross section of the bundle $p_1^{-1}(TM)$, respectively $p_1^{-1}(T^*M)$, to a vector field on $\mathfrak{T}M$. The vertical bundle of the pullback is the pullback of the vertical bundle of $TM,T^*M$, respectively, the Lie brackets are calculated as if the coordinates $z_i$, respectively $y^i$, were parameters, and a cross section of the pullback is {\it projectable} if the Lie brackets with vertical cross sections are again vertical. The local expressions (\ref{extend2}), (\ref{extend3}) remain valid, but, in (\ref{extend2}), $\xi^i=\xi^i(x,z),\eta^i=\eta^i(x,y,z)$ and, in (\ref{extend3}), $\xi^i=\xi^i(x,y),\zeta_i=\zeta_i(x,y,z)$.} \end{rem}
\section{The $G$-structure theory framework}
$G$-structures are reductions of the structure group of the tangent bundle of a manifold to a subgroup $G\subseteq Gl(n,\mathds{R})$, and $G$-structure theory is a general framework to study geometry on manifolds, e.g., \cite{DB}.

The coordinate transformations (\ref{coordtr}) may be interpreted as telling us that $\mathfrak{T}M$ has an integrable $G$-structure where $G$ is the following linear subgroup
$$G=Bt(3m,\mathds{R})=\left( \begin{array}{ccc} A&B&C\vspace*{2mm}\\ 0&A&0\vspace*{2mm}\\ 0&0&\hspace*{1pt}^t\hspace*{-1pt}A^{-1}\end{array}\right),
\hspace{2mm}A\in Gl(m,\mathds{R}).$$
Accordingly, a	$3m$-dimensional manifold $N$ with a $Bt(3m,\mathds{R})$-structure is to be seen as an {\it almost big-tangent manifold}. A characterization of such structures is given in the following proposition.
\begin{prop}\label{Gstr1} Let $N$ be a $3m$-dimensional differentiable manifold. An almost big-tangent structure on $N$ is equivalent with a triple of tensor fields $(S,P,Q)$, where $S$ is of type $(1,1)$, $P$ and $Q$ are of type $(2,0)$, $P$ is skew symmetric and $Q$ is symmetric, and these tensor fields satisfy the properties 1), 2) of Proposition \ref{structuri}.\end{prop}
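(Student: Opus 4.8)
The plan is to treat this as an instance of the standard correspondence in $G$-structure theory: for a closed subgroup $G\subseteq Gl(n,\mathds{R})$ realized as the isotropy group of a model tuple of tensors $t_0$ on $\mathds{R}^n$, the reductions of the frame bundle of $N$ to $G$ are in bijection with the tensor fields $t$ on $N$ that are pointwise in the $Gl(n,\mathds{R})$-orbit of $t_0$. Here $n=3m$ and the model tuple is $t_0=(S_0,P_0,Q_0)$, the constant triple on $\mathds{R}^{3m}=\langle e_i,f_i,g^i\rangle$ given by $S_0=\sum e^i\otimes f_i$, $P_0=\sum f_i\wedge g^i$, $Q_0=\sum f_i\odot g^i$, i.e. the values at a point of the tensor fields of Proposition \ref{structuri} read in the natural frame. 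With this in place, the proposition reduces to two linear-algebraic facts.

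First I would compute the isotropy group $Gl(3m,\mathds{R})_{t_0}$ and check that it is $Bt(3m,\mathds{R})$. Writing $g\in Gl(3m,\mathds{R})$ in $3\times3$ blocks adapted to the splitting $\langle e_i\rangle\oplus\langle f_i\rangle\oplus\langle g^i\rangle$, the condition $gS_0=S_0g$ forces the diagonal $x$- and $y$-blocks to coincide (call them $A$) and kills the blocks coupling $y,z$ to the wrong slots; the conditions $gP_0{}^tg=P_0$ and $gQ_0{}^tg=Q_0$ then force the $z$-diagonal block to be ${}^tA^{-1}$ and, because $P_0$ is skew while $Q_0$ is symmetric, force the surviving mixed block to vanish. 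What remains is exactly the block pattern of $Bt(3m,\mathds{R})$, up to the usual frame/coframe transposition under which the isotropy group matches the transition functions \ref{vectortr} of the natural frame. This is a direct, if slightly tedious, block computation.

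The substantive step is the \emph{normal form}: I must show that the set of triples $(S,P,Q)$ satisfying properties 1), 2) of Proposition \ref{structuri} is precisely the orbit of $t_0$. One inclusion is free, since properties 1), 2) are stated invariantly and are therefore preserved along the orbit, while $t_0$ satisfies them by Proposition \ref{structuri}. For the converse I would argue as follows. Set $K=\ker S$; property 1) gives $\dim K=2m$, $\mathrm{rank}\,P=\mathrm{rank}\,Q=2m$ and $\mathrm{im}\,\sharp_P=\mathrm{im}\,\sharp_Q=K$, so $P$ and $Q$ restrict to a nondegenerate skew form and a nondegenerate symmetric form on $K$, with common kernel $K^{\circ}$ of the sharps. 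The first relation in 2) says that $R:=\sharp_P\circ\flat_Q$ equals $\sharp_Q\circ\flat_P=R^{-1}$, hence $R^2=\mathrm{id}$ is an involution of $K$; writing $b_P,b_Q$ for the induced bilinear forms on $K$ one gets $b_Q(u,w)=b_P(Ru,w)$, whence $R$ is skew-adjoint for $b_P$ and both eigenspaces $K_{\pm}$ of $R$ are Lagrangian for $P$ and null for $Q$, with $b_P$ pairing $K_+$ perfectly against $K_-$. The second relation in 2) reads $R\circ S=-S$, so $\mathrm{im}\,S=K_-$. I would then choose a basis $f_i$ of $K_-=\mathrm{im}\,S$, select vectors $e_i$ with $Se_i=f_i$ (independent modulo $K$, hence spanning a complement of $K$, since $S$ is injective off $K$), and take the basis $g^i$ of $K_+$ dual to $f_i$ under the perfect pairing $b_P$; by construction $S,P,Q$ acquire the forms $S_0,P_0,Q_0$ in the frame $(e_i,f_i,g^i)$, so $(S,P,Q)$ lies in the orbit of $t_0$.

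Finally, combining the two facts with the general $G$-structure correspondence yields the asserted equivalence: a $Bt(3m,\mathds{R})$-reduction $B$ produces the triple whose value in any frame of $B$ is $t_0$ (well defined because $Bt(3m,\mathds{R})$ fixes $t_0$), and conversely a triple satisfying 1), 2) produces the reduction consisting of all frames in which it equals $t_0$; these constructions are mutually inverse. I expect the normal-form step—specifically extracting the involution $R$ from 2) and proving the Lagrangian/null splitting of $K$—to be the only real obstacle, the remaining verifications being routine block or index computations.
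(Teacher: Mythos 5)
Your proposal is correct and follows essentially the same route as the paper: your involution $R=\sharp_P\circ\flat_Q=\sharp_Q\circ\flat_P$ on $\ker S$ is exactly the paper's paracomplex structure $\phi$ of the induced para-Hermitian pairing on $\mathrm{im}\,\sharp_Q$, and your adapted frame $(e_i,f_i,g^i)$ built from $\mathrm{im}\,S=K_-$ and a $b_P$-dual basis of $K_+$ is the paper's frame $(a_i,b_i,c^i)$. The only difference is presentational: you make explicit the isotropy-group block computation and the orbit/reduction correspondence, which the paper compresses into the closing remark that a change of $b_i$ by $A$ forces $c^i$ to change by $\hspace{1pt}^t\hspace{-1pt}A^{-1}$.
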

\begin{proof}
Seen as a $G$-structure, the almost big-tangent structure is a bundle of frames (seen as one-line matrices of vectors) with frame changes in
 $Bt(3m,\mathds{R})$ and such a frame is of the form $(a_i,b_i,c^i)$ $(i=1,...,m)$. If we define
 $$Sa_i=b_i,\,Sb_i=0,\,Sc^i=0,\;P=b_i\wedge c^i,\;Q=b_i\odot c^i,$$ we get global tensor fields that have the required properties.

Conversely, if we have $(S,P,Q)$, we may look at tangent frames $(a_i,b_i,c^i)$ such that $b_i\in im\,S$ and $Sa_i=b_i$, then, define $c^i$ as follows. Take the non degenerate metric $g$ defined on the subbundle $\mathcal{V}=im\,\sharp_Q\subseteq TN$ by $$g(\mathfrak{Y},\mathfrak{Y}')=<\flat_Q\mathfrak{Y},\mathfrak{Y}'>.$$
The metric $g$ is para-Hermitian on $\mathcal{V}$ with the paracomplex structure tensor $\phi=\sharp_Q\circ\flat_P$ and $\mathcal{V}$ is the $+1$-eigenbundle of $\phi$.
Accordingly, $g(b_i,b_j)=0$, and there exist vectors $\tilde{c}^i$ such that
\begin{equation}\label{auxGstr}
g(\tilde{c}^i,\tilde{c}^j)=0,\,g(b_i,\tilde{c}^j)=\delta_i^j.
\end{equation}
Finally, define $c^i=(1/2)(\tilde{c}^i-\phi \tilde{c}^i)$. These vectors still satisfy (\ref{auxGstr}) and yield a basis of the $-1$-eigenbundle of $\phi$, while
$b_i$ is a basis of the $+1$-eigenbundle.  Now, we see that, if $b_i$ change by $A\in Gl(m,\mathds{R})$, $c^i$ change by $\hspace{1pt^t}\hspace{-1pt}A^{-1}$ and the whole basis changes by an element of  $Bt(3m,\mathds{R})$.
\end{proof}
\begin{defin}\label{defintegrab} {\rm An almost big-tangent structure $(S,P,Q)$ is {\it quasi-integrable} and called a {\it quasi-big-tangent structure} if there exists a {\it canonical atlas} of local coordinates $(x^i,y^i,z_i)$ $(i=1,...,m)$ such that the coordinate expressions of the tensors $S$ and $P$ are (\ref{Snou}) and (\ref{P}), respectively, and the structure is called {\it integrable} or {\it big-tangent} if it is quasi-integrable and $Q$ has the expression (\ref{G}).}\end{defin}
\begin{prop}\label{atlasintegr} The Jacobian matrix of a coordinate transformation between maps of the canonical atlas of a quasi-big-tangent structure $(S,P,Q)$ is of the form
$$
\left( \begin{array}{ccc} \frac{\partial\tilde{x}^i}{\partial x^j}
& \frac{\partial\tilde{y}^i}{\partial x^j}&
\frac{\partial\tilde{z}_i}{\partial x^j}\vspace*{2mm}\\ 0&
\frac{\partial\tilde{x}^i}{\partial x^j}&0\vspace*{2mm}\\ 0&\frac{\partial\tilde{y}^i}{\partial z_j}&\frac{\partial x^j}{\partial\tilde{x}^i}\end{array}\right)
$$
and $Q$ is of the form
$$Q=Q^{ij}\frac{\partial}{\partial y^i}\odot\frac{\partial}{\partial y^j}
+\frac{\partial}{\partial y^i}\odot\frac{\partial}{\partial z_i}\;\;(Q^{ij}=Q^{ji}).$$ If the structure is integrable, one also has $\partial\tilde{y}_i/\partial z_j=0$.\end{prop}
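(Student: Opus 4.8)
The plan is to extract every entry of the Jacobian by demanding that the two already-normalized tensors $S$ and $P$ retain their canonical forms $(\ref{Snou})$ and $(\ref{P})$ under a change between charts $(x,y,z)$ and $(\tilde x,\tilde y,\tilde z)$ of the canonical atlas, and then to determine $Q$ from the algebraic relations 1), 2) of Proposition \ref{structuri}. It is convenient to work with the matrix expressing the old coordinate frame $(\partial/\partial x^j,\partial/\partial y^j,\partial/\partial z_j)$ in the new natural frame, whose blocks are precisely the partial derivatives displayed in the statement.

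First I would exploit $S$. Since $\ker S=\mathcal{V}=\langle\partial/\partial y,\partial/\partial z\rangle$ and $\operatorname{im}S=\mathcal{V}_1=\langle\partial/\partial y\rangle$ are tensorial, they are chart-independent; passing to annihilators, $d\tilde x^i$ must lie in $\langle dx^j\rangle$ and $d\tilde z_i$ in $\langle dx^j,dz_j\rangle$. This yields $\tilde x^i=\tilde x^i(x)$, hence $\partial\tilde x^i/\partial y^j=\partial\tilde x^i/\partial z_j=0$, together with $\partial\tilde z_i/\partial y^j=0$, i.e. the three vanishing blocks. Evaluating the full tensor on $\partial/\partial x^k$ then gives $\partial/\partial y^k=(\partial\tilde x^i/\partial x^k)\,\partial/\partial\tilde y^i$ on one side and $(\partial\tilde y^i/\partial y^k)\,\partial/\partial\tilde y^i$ on the other, forcing the diagonal identity $\partial\tilde y^i/\partial y^j=\partial\tilde x^i/\partial x^j$.

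Next I would impose invariance of $P$. With the data just obtained, the $\partial/\partial y\wedge\partial/\partial z$ component of the transformed bivector reduces to $\sum_k(\partial\tilde x^i/\partial x^k)(\partial\tilde z_j/\partial z_k)=\delta^i_j$, whence $\partial\tilde z_i/\partial z_j=\partial x^j/\partial\tilde x^i$, the last diagonal block; the $\partial/\partial y\wedge\partial/\partial y$ component imposes only a symmetry condition on the surviving block $\partial\tilde y^i/\partial z_j$, which therefore remains free, as the matrix asserts. For $Q$ I would use Proposition \ref{structuri}: property 1) gives $\operatorname{im}\sharp_Q=\ker S=\mathcal{V}$, so $Q$ carries no $\partial/\partial x$ and is a rank-$2m$ symmetric combination $Q=Q^{ij}\partial_{y^i}\odot\partial_{y^j}+R^i_j\,\partial_{y^i}\odot\partial_{z_j}+T_{ij}\,\partial_{z_i}\odot\partial_{z_j}$; computing $\sharp_Q\circ\flat_P\circ S$ from the explicit $\sharp_P,\flat_P$ and setting it equal to $-S$ by property 2) forces $R^i_j=\delta^i_j$ and $T_{ij}=0$, which is exactly the claimed form of $Q$.

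Finally, in the integrable case $Q$ is $(\ref{G})$, so the tensor $U=\tfrac12(Q+P)$ of $(\ref{U})$ equals $\partial_{y^i}\otimes\partial_{z_i}$ in both charts; transporting it, its $\partial/\partial y\otimes\partial/\partial y$ component is $\sum_i(\partial x^k/\partial\tilde x^i)(\partial y^l/\partial\tilde z_i)$, and setting this to zero together with invertibility of $\partial x^k/\partial\tilde x^i$ gives $\partial y^l/\partial\tilde z_i=0$, i.e. $\partial\tilde y^i/\partial z_j=0$. The main obstacle I anticipate is the bookkeeping in the $P$ and $U$ steps: one must separate the $\partial/\partial y$ cross term into its skew and symmetric parts and track forward versus inverse Jacobian blocks, so that $\partial\tilde y^i/\partial z_j$ is not over-constrained in the quasi-integrable case yet is killed in the integrable one; using $U$ rather than the symmetric $Q$ in the last step avoids a spurious symmetrization and isolates this obstruction cleanly.
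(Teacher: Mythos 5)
Your proposal is correct and follows essentially the same route as the paper: the zero blocks and the identity $\partial\tilde y^i/\partial y^j=\partial\tilde x^i/\partial x^j$ from the invariance of $\operatorname{im}S$, $\ker S$ and the action of $S$ on coordinate fields, the block $\partial\tilde z_i/\partial z_j$ from the invariance of $P$, the form of $Q$ from properties 1), 2) of Proposition \ref{structuri}, and the final assertion from the invariance of the canonical expression of $Q$. Your only deviations are refinements the paper leaves implicit --- recording the symmetry constraint on the cross block coming from the $\partial_y\wedge\partial_y$ component of $P$, and using $U=\tfrac12(Q+P)$ to combine it with the symmetric constraint from $Q$ so that $\partial\tilde y^i/\partial z_j=0$ follows cleanly --- and both are sound.
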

\begin{proof} Let $(x^i,y^i,z_i)$, $(\tilde{x}^i,\tilde{y}^i,\tilde{z}_i)$ be two maps of the canonical atlas. Since $im\,S=span\{\partial/\partial y^i\}=span\{\partial/\partial \tilde{y}^i\}$, we get $\partial \tilde{x}^i/\partial y^j=\partial \tilde{z}_i/\partial y^j=0$. Furthermore, from $S(\partial/\partial x^i)=\partial/\partial y^i,S(\partial/\partial \tilde{x}^i)=\partial/\partial \tilde{y}^i,S(\partial/\partial z_i)=
S(\partial/\partial \tilde{z}_i)=0$, we get $\partial\tilde{x}^i/\partial z_j=0$ and
$\partial \tilde{y}^i/\partial y^j=\partial \tilde{x}^i/\partial x^j$. Then,
$$P=\frac{\partial}{\partial y^i}\wedge\frac{\partial}{\partial z_i}=\frac{\partial}{\partial \tilde{y}^i}\wedge\frac{\partial}{\partial \tilde{z}_i}$$
implies $\partial\tilde{z}_i/\partial z_j=\partial x^j/\partial\tilde{x}^i$ and the Jacobian matrix has the required form. Furthermore, if we take a general, local expression of $Q$ and ask properties 1), 2) of Proposition \ref{structuri} to hold, we get the required expression of $Q$. The last assertion of the proposition holds because $Q$ has the canonical expression in both maps iff $\partial\tilde{y}^i/\partial z_j=0$.
\end{proof}

We recall that, separately, $P$ is integrable if the Schouten-Nijenhuis bracket is $[P,P]=0$, which is equivalent with the fact that $P$ defines a Poisson bracket. If this happens, $im\,\sharp_P$ is an integrable, generalized distribution with symplectic leaves. The converse is also true if the leaf-wise Hamiltonian fields of differentiable functions are differentiable on the manifold. In particular, if $rank\,P=const.$, $P$ is integrable iff $im\,P$ is a foliation and $P$ has the expression (\ref{P}) \cite{VPs}.

On the other hand, the almost $2$-nilpotent structure $S$, $S^2=0$, $rank\,S=const.$ is said to be integrable if the Nijenhuis tensor is $\mathcal{N}_S=0$. This situation is characterized by
\begin{prop}\label{integrS} A 2-nilpotent structure $S$ of rank $p$
on a manifold $N^{2q+p}$ is integrable iff $E=im\,S$ is a foliation, and there are
local coordinates $(t^h,x^u,y^u)$, ($h=1,...,p$, $u=1,...,q$) and independent, $E$-projectable, local
vector fields $T_h$ such that
\begin{equation}\label{Scanonic} S\frac{\partial}{\partial x^u}=\frac{\partial}{\partial{y}^u},\; S\frac{\partial}{\partial y^u}=0,\;ST_h=0.\end{equation}\end{prop}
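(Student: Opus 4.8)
The statement has two directions. For the direction $\Leftarrow$, suppose the coordinates $(t^h,x^u,y^u)$ and the fields $T_h$ satisfying (\ref{Scanonic}) exist. Since $\mathcal{N}_S$ is a tensor, it is enough to evaluate (\ref{Nij}) on the frame $\{\partial/\partial x^u,\partial/\partial y^u,T_h\}$. Using that coordinate vector fields commute, that $E=im\,S=span\{\partial/\partial y^u\}\subseteq ker\,S$, and that each $T_h$ is $E$-projectable (so $[T_h,\partial/\partial y^u]\in\Gamma E$), one checks that every term in (\ref{Nij}) either vanishes or equals $S$ applied to a section of $E$, hence is zero; as $S^2=0$ and $rank\,S=q$ are immediate, $S$ is integrable.

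For $\Rightarrow$ the plan is to build the coordinates in stages. First I would show $E=im\,S$ is a foliation: writing $U=S\tilde U$, $V=S\tilde V$, formula (\ref{Nij}) gives $[U,V]=\mathcal{N}_S(\tilde U,\tilde V)+S([S\tilde U,\tilde V]+[\tilde U,S\tilde V])$, so $\mathcal{N}_S=0$ forces $[U,V]\in im\,S$; since $rank\,S$ is constant, Frobenius applies. Then I choose coordinates $(w^\alpha,y^u)$ adapted to $E$, so that $E=span\{\partial/\partial y^u\}$, $S(\partial/\partial y^u)=0$, and $S(\partial/\partial w^\alpha)=S^u_\alpha\,\partial/\partial y^u$ with $(S^u_\alpha)$ of rank $q=\dim E$.

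The crux is the next computation. With $Z_\alpha=S(\partial/\partial w^\alpha)\in\Gamma E$, and the leaf fields $\partial/\partial y^u$ now commuting with the transverse fields $\partial/\partial w^\alpha$, evaluating (\ref{Nij}) gives $\mathcal{N}_S(\partial/\partial w^\alpha,\partial/\partial w^\beta)=[Z_\alpha,Z_\beta]$, the two correction terms being $S$ of sections of $E$. Hence $\mathcal{N}_S=0$ makes all the $Z_\alpha$ commute. Reordering $w^\alpha=(x^u,t^h)$ so that the $q\times q$ block $(S^u_v)$ is invertible, the fields $Z_u=S(\partial/\partial x^u)$ form a commuting frame of $E$; by the simultaneous flow-box theorem I straighten them to coordinate fields, obtaining new fibre coordinates with $\partial/\partial y^u=Z_u$. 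Since this change adds only vertical ($S$-killed) terms to $\partial/\partial x^u$, it yields $S(\partial/\partial x^u)=\partial/\partial y^u$, while $S(\partial/\partial y^u)=0$ persists.

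The final step, which I expect to be the main obstacle, is producing the $E$-projectable fields $T_h$ with $ST_h=0$. Put $Z_{t^h}=S(\partial/\partial t^h)=S^u_h\,\partial/\partial y^u$. The commutation relation $[\partial/\partial y^u,Z_{t^h}]=0$ (again forced by $\mathcal{N}_S=0$, exactly as above) yields $\partial S^u_h/\partial y^v=0$, so the $S^u_h$ depend only on $(x,t)$. Setting $T_h=\partial/\partial t^h-S^u_h\,\partial/\partial x^u$ gives $ST_h=0$ by construction, and the $y$-independence of $S^u_h$ gives $[T_h,\partial/\partial y^u]=0\in\Gamma E$, so $T_h$ is $E$-projectable; the $T_h$ are patently independent, which establishes (\ref{Scanonic}). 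The delicate point is that integrability is invoked twice — to straighten the frame of $E$ and to secure projectability of the $T_h$ — and that each coordinate change preserves the previously gained relations precisely because all correction terms are vertical, hence annihilated by $S$.
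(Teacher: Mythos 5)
Your proof is correct, and its core mechanism coincides with the paper's: in both arguments the decisive point is that, for fields transverse to $E=im\,S$, the correction terms in $\mathcal{N}_S$ are $S$ applied to sections of $E\subseteq ker\,S$, so $\mathcal{N}_S=0$ forces the images $S(\partial/\partial w^\alpha)$ to commute, and these commuting fields are then straightened along the leaves to produce the coordinates $y^u$ with $S(\partial/\partial x^u)=\partial/\partial y^u$. Where you genuinely diverge is in how the transversal fields $T_h$ are obtained. The paper first shows that $E'=ker\,S$ is an $E$-projectable distribution (via $\mathcal{N}_S(U,Z)=-S[SU,Z]$ for $Z\in\Gamma E'$) and then invokes the adapted-frame result of \cite{V3} for the flag $E\subseteq E'$ to get the frame $(\partial/\partial s^u,T_h,X_u)$ \emph{before} straightening; you straighten first and then build the $T_h$ explicitly as $\partial/\partial t^h-S^u_h\,\partial/\partial x^u$, extracting the required $y$-independence of $S^u_h$ from one more evaluation of $\mathcal{N}_S$. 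Your route is more elementary and self-contained -- it never needs $ker\,S$ as a distribution nor the multifoliate-frame citation, and it exhibits the projectability of $ker\,S$ concretely in coordinates -- at the cost of some bookkeeping (choosing the invertible $q\times q$ block, checking that each coordinate change only adds $S$-killed vertical terms, which you do correctly). Your sufficiency direction is likewise a fleshed-out version of the paper's one-line claim. (A minor observation: you write $rank\,S=q$, which is what the normal form (\ref{Scanonic}) actually yields; the ``rank $p$'' in the statement is a slip in the paper.)
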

\begin{proof} Formula (\ref{Scanonic}) shows that $E$ is a foliation. Then, $E$-projectability of $T_h$ means that $E$ is preserved by the infinitesimal transformations $T_h$ and we get $\mathcal{N}_S=0$. Conversely, $\mathcal{N}_S=0$ implies the integrability of $E$. Put $E'=ker\,S$. Since $im\,S\subseteq E'$
and $\mathcal{N}_S(U,Z)=-S[SU,Z]$ whenever $Z\in E'$, the
annulation of $\mathcal{N}_S$ gives $[SU,Z]\in\Gamma E'$. Hence, if $S$ is integrable, $E'$ is an $E$-projectable distribution and, following \cite{V3}, $TN$ has local bases of the following form
$$\frac{\partial}{\partial s^u},\; T_h=\frac{\partial}{\partial t^h}+\lambda_h^u\frac{\partial}{\partial s^u},
\;X_u=\frac{\partial}{\partial x^u}+\mu_u^v\frac{\partial}{\partial s^v},$$ where $(t^h,x^u,s^u)$ are local coordinates on $M$, $(\partial/\partial s^u)$ is a basis of $E$ and $(\partial/\partial s^u,T_h)$ is a basis of $E'$. Then, $SX_u=S(\partial/\partial x^u)$ and $\mathcal{N}_S=0$ gives
$[S(\partial/\partial x^u),S(\partial/\partial x^v)]=0$. This commutation property of vector fields tangent to the leaves of $E$ allows us to change the coordinates $s^u$ along the leaves of $E$ by new coordinates $y^u$ such that $S(\partial/\partial x^u)=\partial/\partial y^u$.
\end{proof}

Now, we come back to the integrability of a big-tangent structure and prove the following result.
\begin{prop}\label{propSP} The almost big-tangent structure $(S,P,Q)$ is quasi-integrable iff the structures $S$ and $P$ are integrable and the $P$-Hamiltonian vector fields preserve $S$, i.e.,
\begin{equation}\label{SsiP}\mathcal{N}_S=0,\,[P,P]=0,\, L_{\sharp_Pdf}S=0\hspace{2mm}(f\in C^\infty(N)). \end{equation} The same structure is integrable iff it satisfies (\ref{SsiP}) and there exists a $P$-Lagrangian, $Q$-isotropic, integrable distribution $\Delta$ such that $ker\,S=im\,S\oplus \Delta$.
\end{prop}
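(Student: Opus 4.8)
The plan is to prove the two equivalences separately, in each case dispatching the ``only if'' direction by a computation in a canonical chart and reserving the real work for the ``if'' direction, where two independent normal forms must be produced and then synchronized. Throughout I would keep in mind that $rank\,S=m$, $\dim N=3m$ and $ker\,S=im\,\sharp_P=im\,\sharp_Q$, so that $ker\,S$ is a rank-$2m$ distribution containing the rank-$m$ subbundle $im\,S$.

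For the first equivalence the ``only if'' direction is immediate: in a canonical chart $S$ and $P$ are given by (\ref{Snou}) and (\ref{P}), so $\mathcal{N}_S=0$ and $[P,P]=0$ follow at once from the constant-coefficient expressions, and the third relation in (\ref{SsiP}) reduces to a direct verification on the natural coordinate frame. For the ``if'' direction I would first feed $\mathcal{N}_S=0$ into Proposition \ref{integrS} with $p=m$ and $q=m$: this produces local coordinates in which $im\,S$ is a foliation, $S$ carries a coordinate direction into the leaf direction as in (\ref{Scanonic}), and $ker\,S=im\,\sharp_P$ is spanned by the leaf fields together with the projectable transverse fields $T_h$. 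Next I would bring in $[P,P]=0$: by the regularity hypothesis and the discussion of Poisson integrability preceding the proposition, $P$ is a genuine Poisson bivector whose symplectic foliation is exactly $im\,\sharp_P=ker\,S$, so along each $2m$-dimensional leaf $P$ is nondegenerate. The remaining task is to choose Darboux-type coordinates along these leaves that are simultaneously adapted to $S$.

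This is where the third condition in (\ref{SsiP}) enters and where the main difficulty lies. The relation $L_{\sharp_P df}S=0$ expresses that the $P$-Hamiltonian flows are compatible with $S$, hence transport the $im\,S$-data along the symplectic leaves in a manner that is constant relative to the Poisson structure; concretely, it is what lets the subbundle $im\,S$ of each leaf be integrated to a Lagrangian foliation whose transverse coordinates may be taken as the conjugate ``$z$'' coordinates. Combining the $S$-adapted coordinates $(x^i,y^i)$ with these conjugate coordinates $z_i$, and then checking through the coordinate-transformation analysis of Proposition \ref{atlasintegr} that the only admissible frame changes lie in $Bt(3m,\mathds{R})$, one obtains a canonical chart in which $S$ and $P$ reduce to (\ref{Snou}) and (\ref{P}). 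The hard part is precisely this simultaneous normalization: each of $\mathcal{N}_S=0$ and $[P,P]=0$ yields its own normal form, and it is the Hamiltonian-invariance condition that must be shown to glue them, so the bulk of the argument is verifying that the leafwise Darboux construction can be carried out without disturbing the already-fixed $S$-adapted coordinates.

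For the second equivalence I would begin from a canonical chart supplied by the first part, in which Proposition \ref{atlasintegr} gives $Q=Q^{ij}\,\partial/\partial y^i\odot\partial/\partial y^j+\partial/\partial y^i\odot\partial/\partial z_i$ with $Q^{ij}=Q^{ji}$, the residual coordinate freedom acting only on the $z_i$. Integrability means exactly that the symmetric block $Q^{ij}$ can be transformed away. The plan is to realize the prescribed distribution $\Delta$ as the span of the $\partial/\partial z_i$ in a suitable canonical chart: reading $P$-Lagrangianity and $Q$-isotropy of $\Delta$ off through $\flat_P$ and $\flat_Q$ forces the vanishing of the $Q^{ij}$ term and brings $Q$ to the form (\ref{G}), while integrability of $\Delta$ is what guarantees, via Frobenius, that the required change of the $z$-coordinates closes up to an honest chart. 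Conversely, a canonical chart for the integrable structure exhibits $\Delta=span\{\partial/\partial z_i\}$, which is manifestly $P$-Lagrangian, $Q$-isotropic, integrable and complementary to $im\,S$ inside $ker\,S$. I expect the delicate point to be confirming that $Q$-isotropy together with $P$-Lagrangianity is strong enough to annul $Q^{ij}$ while keeping $S$ and $P$ canonical, i.e. that the adjustment associated with $\Delta$ stays inside the structure group $Bt(3m,\mathds{R})$.
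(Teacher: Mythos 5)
Your skeleton is right, but the proposal stops exactly where the proof has to start: you correctly isolate the simultaneous normalization of $S$ and $P$ as ``the hard part,'' yet you do not supply the argument that performs it, and the mechanism you assign to the third condition in (\ref{SsiP}) is not the one that does the work. The fact that $im\,S$ is an integrable Lagrangian subfoliation of the symplectic foliation of $P$ does \emph{not} come from $L_{\sharp_Pdf}S=0$; it already follows from $\mathcal{N}_S=0$ together with the algebraic relations 1), 2) of Proposition \ref{structuri} (which give $im\,S\subset ker\,S=im\,\sharp_P$ and the $P$-isotropy of $im\,S$). Accordingly, the paper does not begin with Proposition \ref{integrS} and then try to run Darboux compatibly, as you propose; it applies the Carath\'eodory-Jacobi-Lie theorem to the Lagrangian subfoliation $im\,S$ and obtains in one stroke coordinates $(x^i,q^i,p_i)$ with $P=(\partial/\partial q^i)\wedge(\partial/\partial p_i)$, $im\,S=span\{\partial/\partial q^i\}$ and $S(\partial/\partial q^i)=S(\partial/\partial p_i)=0$, so the two normal forms never have to be ``glued.'' The condition $L_{\sharp_Pdf}S=0$ enters only afterwards and for a different purpose: writing $SX_i=\partial/\partial q^i$ with $X_i=\xi_i^j(\partial/\partial x^j)$, it forces $\xi_i^j=\xi_i^j(x)$, i.e.\ $S$ is basic along the symplectic leaves. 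That is precisely what makes the final substitution $z_i=\eta_i^j(x)p_j$ (with $\xi^i_j\eta^j_k=\delta^i_k$) preserve the canonical form of $P$; if $\eta$ depended on the leaf coordinates $q,p$, this rescaling would destroy the Darboux form. A second use of $\mathcal{N}_S=0$ (commutation of the fields $S(\partial/\partial x^i)$) then produces the coordinates $y^i$ with $S(\partial/\partial x^i)=\partial/\partial y^i$. Without these concrete steps, your ``verification that the leafwise Darboux construction can be carried out without disturbing the $S$-adapted coordinates'' remains an announcement rather than an argument.

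For the second equivalence your picture is essentially correct and matches the paper's: in a canonical chart of the quasi-integrable structure one arranges $\Delta=span\{\partial/\partial z_i\}$, and $Q$-isotropy of $\Delta$ annihilates the block $Q^{ij}$ in the normal form of Proposition \ref{atlasintegr}, giving (\ref{G}); the converse is a direct inspection. But the forward step again hinges on the same tool you left implicit: one must re-apply the Carath\'eodory-Jacobi-Lie theorem along the symplectic leaves, this time adapted to the Lagrangian foliation $\Delta$ (integrability of $\Delta$ and $x^i=const.$ along its leaves are what make this legitimate), so that the $z_i$ become coordinates along $\Delta$ while $P$ stays canonical, and then repeat the ending argument of the first part to keep $S$ canonical as well. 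I would ask you to carry out the two normalizations explicitly rather than flagging them as the delicate points.
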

\begin{proof} By definition, quasi-integrability implies (\ref{SsiP}). Conversely assume that (\ref{SsiP}) holds. Properties 1), 2) of Proposition \ref{structuri} show that $im\,S$ is a Lagrangian subfoliation of the symplectic foliation of $P$, where the tangent spaces of the leaves are $ker\,S$. Then, a well known result of symplectic geometry (e..g., the Carath\'eodory-Jacobi-Lie theorem17.2 in \cite{LM}) tells us that there exists an atlas with local coordinates $(x^i,q^i,p_i)$ such that
$$ im\,S=span\{\frac{\partial}{\partial q^i}\},\; P=
\frac{\partial}{\partial q^i}\wedge\frac{\partial}{\partial p_i},\; S\frac{\partial}{\partial q^i}=0,\;S\frac{\partial}{\partial p_i}=0.$$
Accordingly, there are local, independent, vector fields $X_i=\xi_i^j(\partial/\partial x^j)$ such that $SX_i=\partial/\partial q^i$ and the last condition (\ref{SsiP}) implies $\xi_i^j=\xi_i^j(x)$. Then, we have $S(\partial/\partial x^i)=\eta_i^j(x)(\partial/\partial q^j)$, where $\xi^i_j\eta^j_k=\delta^i_k$, and $\mathcal{N}_S=0$ implies that the vector fields $S(\partial/\partial x^i)$ commute. Accordingly, there exist new local coordinates $y^i=y^i(x,q)$ along the leaves of $im\,S$ such that $S(\partial/\partial x^i)=\partial/\partial y^i$. If we extend our change of coordinates by the transformation $z_i=\eta_i^j(x)p_j$, we still have $P=(\partial/\partial y^i)\wedge(\partial/\partial z_i)$ and we see that the structure is quasi-integrable.

For the integrable case, what we meant by $P$-Lagrangian and $Q$-isotropic is
$$<\flat_PZ_1,Z_2>=0,\;\;<\flat_QZ_1,Z_2>=0,\;\; \forall Z_1,Z_2\in \Delta.$$
If the structure is integrable, we may take $\Delta=span\{\partial/\partial z_i\}$. Conversely, by the conditions assumed for $\Delta$, $x^i=const.$ along the leaves tangent to $\Delta$ and we may use the Carath\'eodory-Jacobi-Lie theorem along the symplectic leaves of $P$ to get coordinates $y^i,z_i$ on them, such that $P$ still has the canonical form, while $z_i$ are coordinates along the leaves of $\Delta$. Moreover, like in the ending argument of the quasi-integrability part of the proof, we can arrange all the coordinates such that both $P$ and $S$ preserve the canonical form. Finally, if we ask $Q$-isotropy of $span\{\partial/\partial z_i\}$ for $Q$ given by Proposition \ref{atlasintegr}, we get $Q^{ij}=0$ and we are done.
\end{proof}
\begin{rem}\label{obsafin2} {\rm
The coordinate transformations of the canonical atlas of an integrable structure $(S,P,Q)$ are not exactly (\ref{coordtr}). The Jacobian matrix shown in Proposition \ref{atlasintegr} shows that they are of the form
$$ \tilde{x}^i=\tilde{x}^i(x^j),
\,\tilde{y}^i=\frac{\partial\tilde{x}^i}{\partial x^j}y^j+\varphi^i(x),\,
\tilde{z}_i=\frac{\partial{x}^j}{\partial \tilde{x}^i}z_j+\psi_i(x).$$ These formulas show a locally affine structure on the symplectic leaves of $P$. The obstructions for an integrable big-tangent manifold to actually be the big tangent manifold $\mathfrak{T}M$ of a given manifold $M$ are given by the more general theory developed in \cite{VLagr}.}\end{rem}
\section{Horizontal bundles and Bott connections}
Like for the usual tangent bundle and like in foliation theory, the geometry of the manifold $\mathfrak{T}M$ may be enhanced by the addition of a {\it horizontal bundle} $\mathcal{H}$ such that
\begin{equation}\label{descbig} T\mathfrak{T}M=\mathcal{H}\oplus\mathcal{V}= \mathcal{H}\oplus\mathcal{V}_1\oplus\mathcal{V}_2.\end{equation}

The first equality (\ref{descbig}) produces a double grading of forms and multivectors; our convention is that {\it bidegree} or {\it type} $(p,q)$ means $\mathcal{H}$-degree $p$ and $\mathcal{V}$-degree $q$. The exterior differential	 has the decomposition
\begin{equation}\label{descd} d=d'_{(1,0)}+d''_{(0,1)}+\partial_{(2,-1)},\end{equation}
where $d''$ is the exterior differential along the leaves of $\mathcal{V}$ \cite{V71}. The second equality (\ref{descbig}), produces a double grading $(r,s)$ ($q=r+s$) of the $\mathcal{V}$-degree $q$, which corresponds to the terms $\mathcal{V}_1,\mathcal{V}_2$. This leads to a further decomposition of the terms of (\ref{descd}). For instance, we get $$ d''=d_{0,1,0}+d_{0,0,1}.$$

For a chosen $\mathcal{H}$, a vector $X\in T_xM$ has a {\it horizontal lift} $X^h$ defined by $X^h\in\mathcal{H}_{(x,y,z)}$,$p_*X^h=X$, and $T\mathfrak{T}M$ has local canonical bases
$$ X_i=\left(\frac{\partial}{\partial x^i}\right)^h= \frac{\partial}{\partial x^i}-t_i^j\frac{\partial}{\partial y^j}-\tau_{ij}\frac{\partial}{\partial z_j},\, \frac{\partial}{\partial y^i},\,\frac{\partial}{\partial z_i},$$
where $t,\tau$ are local functions of $(x,y,z)$.
The corresponding dual bases are
\begin{equation}\label{hcobase} dx^i,\,\theta^i=dy^i+t^i_jdx^j,\,\kappa_i=dz_i+\tau_{ij}dx^j.
\end{equation} A change of coordinates (\ref{coordtr}) implies the following transformation formulas
\begin{equation}\label{transh} \begin{array}{l}
\tilde{X}_i=\frac{\partial x^j}{\partial \tilde{x}^i}X_j,\;\tilde{\theta}^i=\frac{\partial \tilde{x}^i}{\partial{x}^j}\theta^j, \;\tilde{\kappa}_i=\frac{\partial {x}^j}{\partial{\tilde{x}}^i}\kappa_j,\vspace*{2mm}\\ \tilde{t}^i_j=\frac{\partial\tilde{x}^i}{\partial x^h} \frac{\partial x^k}{\partial \tilde{x}^j}t^h_k -\frac{\partial x^k}{\partial \tilde{x}^j}\frac{\partial^2 \tilde{x}^i}{\partial{x}^l\partial x^k}y^l,\; \tilde{\tau}_{ij}=\frac{\partial x^h}{\partial \tilde{x}^i} \frac{\partial x^k}{\partial \tilde{x}^j}\tau_{hk} -\frac{\partial^2 x^h}{\partial{\tilde{x}}^i\partial\tilde{x}^j}z_h.
\end{array}\end{equation}
\begin{prop}\label{extdehor} Any horizontal bundle on $TM$ has a canonical lift to a horizontal bundle on $\mathfrak{T}M$. Any horizontal bundle on the cotangent bundle $T^*M$ has a canonical lift to a horizontal bundle on $\mathfrak{T}M$.\end{prop}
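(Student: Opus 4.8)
The plan is to realize the lift through the extended-lift machinery of Section 2, so that no choices enter beyond the given connection. Recall that a horizontal bundle on $TM$ is a nonlinear connection: over a chart it is spanned by the vertically projectable vector fields $\delta/\delta x^i = \partial/\partial x^i - N_i^j(x,y)\,\partial/\partial y^j$, whose coefficients obey the nonlinear-connection transformation law, i.e. the same law as the one for $t$ in (\ref{transh}). These frame fields are exactly of the form to which Definition \ref{extended1} applies, so I would first compute their extended lifts $(\delta/\delta x^i)^e$ to $\mathfrak{T}M$ from (\ref{extend2}). A direct substitution (with $\xi^i=\delta^i_k$, $\eta^i=-N_k^i$) gives $(\delta/\delta x^k)^e = \partial/\partial x^k - N_k^i\,\partial/\partial y^i + z_j(\partial N_k^j/\partial y^i)\,\partial/\partial z_i$, so the candidate bundle $\mathcal{H}:=\mathrm{span}\{(\delta/\delta x^i)^e\}$ has local coefficients $t_i^j=N_i^j$ and $\tau_{ij}=-z_k\,\partial N_i^k/\partial y^j$ in the notation of (\ref{hcobase})--(\ref{transh}).

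Second, I would check that $\mathcal{H}$ is a genuine horizontal bundle. Complementarity $T\mathfrak{T}M=\mathcal{H}\oplus\mathcal{V}_1\oplus\mathcal{V}_2$ is immediate, since each $(\delta/\delta x^i)^e$ has $\partial/\partial x^i$ as its only non-vertical part, so the $m$ lifted fields are independent modulo $\mathcal{V}$ and span a rank-$m$ complement. The substantive point is coordinate independence, i.e. that $t_i^j$ and $\tau_{ij}$ satisfy (\ref{transh}). For $t$ this is just the nonlinear-connection law for $N$. For $\tau$ the key lemma is that $\Gamma^j_{ik}:=\partial N_i^j/\partial y^k$ transforms as the coefficients of a linear connection; granting this, $\tau_{ij}=-z_k\Gamma^k_{ij}$ transforms exactly by the rule for $\tau$ in (\ref{transh}), by the same computation that shows the dual of a linear connection on $TM$ descends to $T^*M$.

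Third, the $T^*M$ statement is handled symmetrically. A horizontal bundle on $T^*M$ is spanned over a chart by the vertically projectable fields $\delta/\delta x^i = \partial/\partial x^i - M_{ij}(x,z)\,\partial/\partial z_j$, whose coefficients obey the law matching that of $\tau$; applying the extended lift of Definition \ref{extended2} through (\ref{extend3}) yields $\tau_{ij}=M_{ij}$ and $t_i^j=-z_k\,\partial M_{ik}/\partial z_j$, and one checks invariance as before, now using that $\partial M_{ik}/\partial z_j$ transforms as a dual linear connection. Remark \ref{obseliftgen} guarantees that the extended-lift formulas remain valid for these frames even though their coefficients depend on all of $(x,y,z)$, which is what is needed to assemble the local pieces into a global bundle.

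I expect the main obstacle to be the verification that $\partial N_i^j/\partial y^k$ transforms tensorially as a linear connection (and the dual statement for $M$). This is the nonlinear-to-linear ``Berwald'' step: one differentiates the inhomogeneous transformation law of $N_i^j$ in $\tilde y^m$, uses $\partial y^l/\partial\tilde y^m = \partial x^l/\partial\tilde x^m$, and invokes the second-derivative identity $\frac{\partial^2\tilde x^j}{\partial x^k\partial x^l}\frac{\partial x^k}{\partial\tilde x^i}\frac{\partial x^l}{\partial\tilde x^m} = -\frac{\partial\tilde x^j}{\partial x^h}\frac{\partial^2 x^h}{\partial\tilde x^i\partial\tilde x^m}$ to convert the inhomogeneous term into the expected Christoffel form. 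A secondary point to state carefully is that, because the extended lift is not $C^\infty$-linear, well-definedness of $\mathcal{H}$ must be argued through the tensorial transformation of the pair $(t,\tau)$ rather than through naive linearity of the lift under frame changes.
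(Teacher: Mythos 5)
Your proposal is correct and follows essentially the same route as the paper: the horizontal bundle of $\mathfrak{T}M$ is spanned by the extended lifts of the projectable horizontal frame fields, yielding exactly the coefficients $\tau_{ij}=-z_h\,\partial t_i^h/\partial y^j$ (respectively $t_i^j=-z_h\,\partial\tau_{ih}/\partial z_j$) of (\ref{taulift})--(\ref{tlift}). Your explicit verification of (\ref{transh}) via the ``Berwald'' differentiation of the nonlinear-connection law is a correct spelling-out of what the paper leaves implicit in the coordinate-free definition of the extended lift and Remark \ref{obseliftgen}.
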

\begin{proof} In both cases, the horizontal bundle of $\mathfrak{T}M$ is spanned by the extended lifts of the projectable, horizontal vector fields of $TM$, respectively, $T^*M$.
If the basis of the horizontal lift of $TM$ is $\partial/\partial x^i-t_i^j(x,y)(\partial/\partial y^j)$, then, formula (\ref{extend2}) gives the following horizontal basis of the canonical lift
$$\frac{\partial}{\partial x^i}-t_i^j\frac{\partial}{\partial y^j}+z_h\frac{\partial t_i^h}{\partial y^j}
\frac{\partial}{\partial z_j}.$$
Similarly, if the horizontal basis on $T^*M$ is
$\partial/\partial x^i-\tau_{ij}(x,z)(\partial/\partial z_j)$, then, formula (\ref{extend3}) gives the following horizontal basis of the canonical lift
$$\frac{\partial}{\partial x^i}+ z_h\frac{\partial\tau_{ih}}
{\partial z_j}\frac{\partial}{\partial y^j} -\tau_{ij}\frac{\partial}{\partial z_j}.$$

Alternatively, as indicated in Remark \ref{obseliftgen}, instead of using the extended lift we may use extensions from the pullback bundles to $\mathfrak{T}$. This implies that if coefficients $t_i^j$ satisfying (\ref{transh}) are known, the formula
\begin{equation}\label{taulift}\tau_{ij}=-z_h\frac{\partial t_i^h}{\partial y^j}\end{equation}
yields coefficients $\tau_{ij}$ that satisfy (\ref{transh}), hence, we get a horizontal bundle on $\mathfrak{T}M$. Similarly, if we have the coefficients $\tau_{ij}$ the formula
\begin{equation}\label{tlift}t_i^j=-z_h\frac{\partial\tau_{ih}}
{\partial z_j}\end{equation} completes the construction of a horizontal bundle.\end{proof}
\begin{rem}\label{nonlincovar} {\rm
In a different terminology (e.g., \cite{BM}), $\mathcal{H}$ is called a non-linear connection, $t,\tau$ are called the connection coefficients and there exists an equivalent covariant derivative $\nabla^{\mathcal{H}}_X:\Gamma(p^{-1}(\mathbf{T}M))\rightarrow \Gamma(p^{-1}(\mathbf{T}M))$ ($X\in TM$), which has the local expression
$$\nabla^{\mathcal{H}}_{\xi^k\frac{\partial}{\partial x^k}}(\nu^i\frac{\partial}{\partial x^i},\kappa_jdx^j)=(\xi^j(\frac{\partial\nu^i}{\partial x^j}+t^i_j)\frac{\partial}{\partial x^i},\xi^j(\frac{\partial\kappa_i}{\partial x^j}-\tau_{ij})dx^i).$$}\end{rem}
\begin{example}\label{exHGamma} {\rm
Let $\Gamma$ be an arbitrary linear connection on $M$ with local connection coefficients $\Gamma^i_{jk}$. Then, the tangent vectors of the paths defined in $\mathfrak{T}M$ by	 parallel translation of the vector $y$ and of the covector $z$ along paths through $x$ in $M$ span a complement $\mathcal{H}_{(x,y,z)}$ of $\mathcal{V}_{(x,y,z)}$. The horizontal distribution obtained in this way has the local bases
\begin{equation}\label{Gammahoriz} X_i=\frac{\partial}{\partial x^i}-y^k\Gamma^j_{ik} \frac{\partial}{\partial y^j} +z_k\Gamma^k_{ij} \frac{\partial}{\partial z_j}.\end{equation}
The coefficients of this connection are related by (\ref{taulift}) and (\ref{tlift}) simultaneously.}\end{example}
\begin{example}\label{exHL} {\rm We recall the following classical construction on $TM$ \cite{{Cra},{MR}}.
A vector field
\begin{equation}\label{vectso} \mathfrak{X}=y^i\frac{\partial}{\partial{x}^i}
+\eta^i\frac{\partial}{\partial{y}^i}\in\chi(TM)\end{equation}
is {\it of the second order} because it is locally equivalent with a system of second order, ordinary, differential equations on $M$. For any second order vector field $\mathcal{X}$, the $(1,1)$-tensor field $L_{\mathcal{X}}S_{TM}$ is an almost product structure ($(L_{\mathcal{X}}S_{TM})^2=Id$) with the $+1$-eigenbundle equal to the vertical bundle. Hence, the $-1$-eigenbundle is a horizontal bundle $\mathcal{H}_{\mathcal{X}}$.

Let $\mathcal{L}\in C^\infty(TM)$ be a function with a non degenerate Hessian matrix $(\partial^2\mathcal{L}/\partial y^i\partial y^j)$, called a {\it regular Lagrangian}. Then, the equation
$i(\Gamma_{\mathcal{L}})\theta
=-d\mathcal{E}$, where
$$\theta=d\vartheta,\;
\vartheta=d\mathcal{L}\circ S_{TM}=\frac{\partial\mathcal{L}}{\partial y^i}dx^i,\;
\mathcal{E}=y^i\frac{\partial\mathcal{L}}{\partial y^i} -\mathcal{L},$$	 defines a second order vector field $\Gamma_{\mathcal{L}}$, therefore, a corresponding horizontal bundle $\mathcal{H}_{\Gamma_{\mathcal{L}}}$ on $TM$. The extended lift of $\mathcal{H}_{\Gamma_{\mathcal{L}}}$ given by Proposition \ref{extdehor} is a horizontal bundle $\mathcal{H}_{\mathcal{L}}$ on $\mathfrak{T}M$.
Notice that the field $\Gamma_{\mathcal{L}}$ itself is not foliated, hence, it does not have an extended lift.}\end{example}

We can give a $\mathfrak{T}M$-version of the notion of a second order vector field of Example \ref{exHL} and say that the vector field (\ref{vect}) is of {\it the second order} if $S\mathfrak{X}=\mathcal{E}_1$, where $\mathcal{E}_1$ is defined in (\ref{EVF}), equivalently, $\mathfrak{X}$ has the local expression
\begin{equation}\label{vectq} \mathfrak{X}=y^i\frac{\partial}{\partial{x}^i}
+\eta^i\frac{\partial}{\partial{y}^i}
+\zeta_i\frac{\partial}{\partial{z}_i}.\end{equation}
\begin{prop}\label{vect2pebig} For any second order vector field $\mathfrak{X}$ on $\mathfrak{T}M$, the tensor field
$Q_{\mathfrak{X}}=L_{\mathfrak{X}}S\in End(T\mathfrak{T}M)$ satisfies the identity $Q_{\mathfrak{X}}^3-Q_{\mathfrak{X}}=0$. The $-1$-eigenbundle of $Q_{\mathfrak{X}}$ is a horizontal bundle $\mathcal{H}_{\mathfrak{X}}$ with the local bases
\begin{equation}\label{vectorhor}
X_i= \frac{\partial}{\partial x^i}+\frac{1}{2}\frac{\partial\eta^j}{\partial y^i}\frac{\partial}{\partial y^j}+\frac{\partial\zeta_j}{\partial y^i}\frac{\partial}{\partial z_j}.\end{equation}
\end{prop}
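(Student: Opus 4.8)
The plan is to compute the endomorphism $Q_{\mathfrak{X}}=L_{\mathfrak{X}}S$ explicitly on the natural frame $(\partial/\partial x^i,\partial/\partial y^i,\partial/\partial z_i)$ and read everything off from its matrix. First I would use the standard identity for the Lie derivative of a $(1,1)$-tensor,
$$(L_{\mathfrak{X}}S)\mathfrak{Y}=[\mathfrak{X},S\mathfrak{Y}]-S[\mathfrak{X},\mathfrak{Y}],$$
together with $S(\partial/\partial x^i)=\partial/\partial y^i$ and $S(\partial/\partial y^i)=S(\partial/\partial z_i)=0$ from (\ref{Snou}), and the second-order form (\ref{vectq}) of $\mathfrak{X}$.

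Carrying out the brackets with $\mathfrak{X}=y^i\partial/\partial x^i+\eta^i\partial/\partial y^i+\zeta_i\partial/\partial z_i$, the decisive contribution is $[\mathfrak{X},\partial/\partial y^k]=-\partial/\partial x^k-(\partial\eta^i/\partial y^k)\partial/\partial y^i-(\partial\zeta_i/\partial y^k)\partial/\partial z_i$, where the leading term $-\partial/\partial x^k$ arises precisely from the second-order condition $\partial y^i/\partial y^k=\delta^i_k$. I expect to obtain
$$Q_{\mathfrak{X}}\frac{\partial}{\partial x^k}=-\frac{\partial}{\partial x^k}-\frac{\partial\eta^i}{\partial y^k}\frac{\partial}{\partial y^i}-\frac{\partial\zeta_i}{\partial y^k}\frac{\partial}{\partial z_i},\quad Q_{\mathfrak{X}}\frac{\partial}{\partial y^k}=\frac{\partial}{\partial y^k},\quad Q_{\mathfrak{X}}\frac{\partial}{\partial z_k}=0.$$
In block form, with blocks indexed by the $x$-, $y$- and $z$-coordinates, this is a block lower-triangular matrix with diagonal blocks $-I,\,I,\,0$, off-diagonal $(2,1)$-block $-A:=-(\partial\eta^i/\partial y^k)$ and $(3,1)$-block $-B:=-(\partial\zeta_i/\partial y^k)$. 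The identity $Q_{\mathfrak{X}}^3=Q_{\mathfrak{X}}$ then follows by a direct block multiplication: squaring annihilates the $(2,1)$-block and carries the $(3,1)$-block from $-B$ to $B$, and a third multiplication returns exactly the original matrix.

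Since $Q_{\mathfrak{X}}^3-Q_{\mathfrak{X}}=Q_{\mathfrak{X}}(Q_{\mathfrak{X}}-I)(Q_{\mathfrak{X}}+I)=0$, the eigenvalues lie in $\{-1,0,1\}$. To extract the horizontal bundle I would solve $Q_{\mathfrak{X}}\mathfrak{Y}=-\mathfrak{Y}$ for $\mathfrak{Y}=a^k\partial/\partial x^k+b^i\partial/\partial y^i+c_i\partial/\partial z_i$: the $x$-component equation is automatically satisfied, while the $y$- and $z$-components force $b^i=\tfrac12 a^k\,\partial\eta^i/\partial y^k$ and $c_i=a^k\,\partial\zeta_i/\partial y^k$. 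Taking $a^k=\delta^k_i$ yields exactly the announced basis $X_i$ of (\ref{vectorhor}). Because each $X_i$ has $\partial/\partial x^i$ as its leading term, the $m$ fields $X_i$ are independent modulo $\mathcal{V}$, so the $-1$-eigenbundle is a rank-$m$ complement of $\mathcal{V}$, i.e. a horizontal bundle $\mathcal{H}_{\mathfrak{X}}$.

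The computation is routine once the bracket identities are in place; the only point needing care is the role of the second-order hypothesis, which I expect to be the main (if minor) obstacle. The diagonal $(2,2)$-block of $Q_{\mathfrak{X}}$ must equal $+I$, i.e. $Q_{\mathfrak{X}}(\partial/\partial y^k)=\partial/\partial y^k$; this rests on $S[\mathfrak{X},\partial/\partial y^k]=-\partial/\partial y^k$, hence on the $x$-component of $\mathfrak{X}$ being exactly $y^i$. For a general vector field this block would be $(\partial\sigma^i/\partial y^k)$ and neither $Q_{\mathfrak{X}}^3=Q_{\mathfrak{X}}$ nor the rank of the $-1$-eigenbundle would survive, so the hypothesis $S\mathfrak{X}=\mathcal{E}_1$ is genuinely used. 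Finally, since $Q_{\mathfrak{X}}$ is a globally defined tensor (the Lie derivative of the global tensor $S$), its $-1$-eigenbundle is globally well defined, and the local rank computation guarantees it has constant rank $m$.
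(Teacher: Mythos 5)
Your proposal is correct and follows essentially the same route as the paper: compute $Q_{\mathfrak{X}}$ on the natural frame, read off the block-triangular matrix with diagonal blocks $-I,I,0$ to get $Q_{\mathfrak{X}}^3=Q_{\mathfrak{X}}$, and solve $Q_{\mathfrak{X}}\mathfrak{Y}=-\mathfrak{Y}$ to obtain the basis (\ref{vectorhor}). The paper leaves these as ``easy calculations''; you simply carry them out, correctly identifying where the second-order hypothesis $S\mathfrak{X}=\mathcal{E}_1$ is used.
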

\begin{proof} Easy calculations show that the required identity holds on the natural tangent basis defined by $(x^i,y^i,z_i)$. In particular, $Q_{\mathfrak{X}}(\partial/\partial y^i)=\partial/\partial y^i$, $Q_{\mathfrak{X}}(\partial/\partial z_i)=0$ and $\mathcal{V}$ is the direct sum of the eigenspaces with eigenvalues $1,0$ of $Q_{\mathfrak{X}}$. Hence, if we denote the $-1$-eigenbundle by $\mathcal{H}_{\mathfrak{X}}$, the latter is horizontal. The local coordinate computation of $Q_{\mathfrak{X}}\mathfrak{X}'$ shows that $\mathfrak{X}'$ is a $-1$-eigenvector iff it is of the form
$$ \mathfrak{X}'=\xi^{'i}\frac{\partial}{\partial x^i}
+\frac{1}{2}\xi^{'j}\frac{\partial\eta^i}{\partial y^j} \frac{\partial}{\partial y^i}
+\xi^{'j}\frac{\partial\zeta_i}{\partial y^j} \frac{\partial}{\partial z_i},$$
therefore, the horizontal lifts of $\partial/\partial x^i$, are (\ref{vectorhor}).\end{proof}
\begin{rem}\label{obsobsHX1} {\rm For any choice of a horizontal bundle $\mathcal{H}$ on $\mathfrak{T}M$, the vector field $\mathfrak{X}=y^i\mathfrak{X}_i$, where $\mathfrak{X}_i$ is the horizontal lift of $\partial/\partial x^i$, is the unique, $\mathcal{H}$-horizontal, second order vector field.}\end{rem}
\begin{rem}\label{obsextendq} {\rm The transformation formulas (\ref{vectortr}) allow us to check that, if (\ref{vectso}) is a second order vector field (or, more generally, a cross section of $p_1^{-1}(T(TM))$, i.e., the coefficients $\eta^i$ also depend on the ``parameters" $z_j$), it has a canonical extension to a second order vector field on $\mathfrak{T}M$, which is given by
$$\mathfrak{X}=y^i\frac{\partial}{\partial{x}^i}
+\eta^i\frac{\partial}{\partial{y}^i}-\frac{1}{2} \frac{\partial\eta^h}{\partial y^i}z_h \frac{\partial}{\partial{z}_i}.$$}\end{rem}
\begin{rem}\label{obsafin} {\rm
On both $TM$ and $\mathfrak{T}M$, the set $\mathcal{Q}(TM)$, respectively $\mathcal{Q}(\mathfrak{T}M)$, of second order vector fields is closed under convex linear combinations and its geometric interpretation is that of the set of global cross sections of an affine fiber bundle modeled over the vertical vector bundle. In fact, this is an affine subbundle of the affine tangent bundle of the manifold $TM$, respectively $\mathfrak{T}M$ \cite{KN}. The points of the affine fibers are vectors of the form (\ref{vectso}), respectively (\ref{vectq}), where the first term is the origin and the two other terms define the vector from the origin to the point $\mathfrak{X}$. The coefficients $\eta^i,\zeta_i$ are fiber-wise affine coordinates.}\end{rem}
\begin{rem}\label{formaLi} {\rm
On $\mathfrak{T}M$, we can also define a special class of $1$-forms, namely, a $1$-form $\mathfrak{a}$ will be {\it Liouville-related} if $\mathfrak{a}\circ S$ is the Liouville form $\lambda$. Then locally $\mathfrak{a}$ looks as follows
$$\mathfrak{a}=\alpha_idx^i+z_idy^i+\gamma^idz_i.$$}\end{rem}

For the rest of this section we fix a horizontal bundle $\mathcal{H}$. Then, foliation theory offers the following important linear connections.
\begin{defin}\label{defBott} {\rm A {\it connection with no mixed torsion} or {\it Bott connection} is a linear connection on $\mathfrak{T}M$ that preserves the subbundles $\mathcal{H},\mathcal{V}$ and its torsion $T_\nabla$ satisfies the condition
$T_\nabla(\mathfrak{X},\mathfrak{Y})=0$, $\forall\mathfrak{X}\in\mathcal{H},\mathfrak{Y} \in\mathcal{V}$.}\end{defin}

The required torsion condition is equivalent to
\begin{equation}\label{BottV} \nabla_{\mathfrak{X}}\mathfrak{Y} =pr_{\mathcal{V}}[\mathfrak{X},\mathfrak{Y}],\,
\nabla_{\mathfrak{Y}}\mathfrak{X} =pr_{\mathcal{H}}[\mathfrak{Y},\mathfrak{X}]\end{equation}
and also equivalent to
\begin{equation}\label{BottV'} \nabla_{\mathfrak{X}}\mathfrak{Y} =[\mathfrak{X},\mathfrak{Y}],\,
\nabla_{\mathfrak{Y}}\mathfrak{X}=0, \end{equation}
where the horizontal field $\mathfrak{X}$ is projectable by $p$.

If $D$ is an arbitrary, linear connection on $\mathfrak{T}M$, the addition of the derivatives
\begin{equation}\label{BottDV}
\nabla_{\mathfrak{X}}\mathfrak{X}'
=pr_{\mathcal{H}}D_{\mathfrak{X}}\mathfrak{X}',\, \nabla_{\mathfrak{Y}}\mathfrak{Y}'
=pr_{\mathcal{V}}D_{\mathfrak{Y}}\mathfrak{Y}'\; (\mathfrak{X},\mathfrak{X}'\in\mathcal{H},\,
\mathfrak{Y},\mathfrak{Y}'\in\mathcal{V})
\end{equation}
to (\ref{BottV}) defines a Bott connection $\nabla^D=\nabla$. Following \cite{Bej0}, which traced back the history of this connection to a 1931 paper by G. Vr\u anceanu \cite{Vr}, we call $\nabla^D$ a {\it Vr\u anceanu-Bott connection}.
Furthermore, if $D$ has zero torsion, $\nabla^D$ has the torsion
\begin{equation}\label{torsBott} \begin{array}{r}
T_{\nabla^D}(\mathfrak{Z}_1,\mathfrak{Z}_2) =-pr_{\mathfrak{V}}[pr_{\mathcal{H}}\mathfrak{Z}_1, pr_{\mathcal{H}}\mathfrak{Z}_2]\vspace*{2mm}\\ = -R_{\mathcal{H}}(\mathfrak{Z}_1,\mathfrak{Z}_2),\;\; \mathfrak{Z}_1,\mathfrak{Z}_2\in\chi(\mathfrak{T}M).\end{array}
\end{equation} $R_{\mathcal{H}}$ is called the Ehressmann curvature of the non-linear connection $\mathcal{H}$.
\begin{defin}\label{def2Bott} {\rm A {\it connection with no multi-mixed torsion} on $\mathfrak{T}M$ is a Bott connection $\nabla$ that also preserves the subbundles $\mathcal{V}_1,\mathcal{V}_2$ and satisfies the condition
$ T_\nabla(\mathfrak{X}_a,\mathfrak{Y}_{a'})=0$, where
$a,a'\in\{1,2\}$, $a'\equiv a+1\,({\rm mod.}\,2)$ and
$\mathfrak{X}_a,\mathfrak{Y}_a\in\mathcal{V}_a$.}\end{defin}

The new torsion condition of Definition \ref{def2Bott} is equivalent to
\begin{equation}\label{Bott}\nabla_{\mathfrak{X}_a}\mathfrak{Y}_{a'} =pr_{\mathcal{V}_{a'}}[\mathfrak{X}_a,\mathfrak{Y}_{a'}].\end{equation}
Notice that for a $\mathcal{V}$-projectable vector field $\mathfrak{X}$, the first equation (\ref{BottV'}) becomes $\nabla_{\mathfrak{X}}\mathfrak{Y}_a =pr_{\mathcal{V}_a}[\mathfrak{X},\mathfrak{Y}_a]$.
\begin{prop}\label{canonicBott} On $\mathfrak{T}M$, there exists a {\it canonical connection} $\nabla$ with no multi-mixed torsion, which depends only on the horizontal bundle $\mathcal{H}$ and such that, along the leaves of the foliations $\mathcal{V}_1,\mathcal{V}_2,\mathcal{V}$, this is the flat connection of the affine structure of the leaves.\end{prop}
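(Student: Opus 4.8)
The plan is to construct $\nabla$ directly from the threefold splitting $T\mathfrak{T}M=\mathcal{H}\oplus\mathcal{V}_1\oplus\mathcal{V}_2$. A linear connection that preserves all three subbundles is the same datum as nine ``blocks'' $\nabla_{E}F$, where $E,F\in\{\mathcal{H},\mathcal{V}_1,\mathcal{V}_2\}$ and each block is a map $\Gamma E\times\Gamma F\to\Gamma F$ that is $C^\infty$-linear in its first slot and Leibniz in its second; once the nine blocks are fixed, $\nabla$ extends uniquely to $\chi(\mathfrak{T}M)\times\chi(\mathfrak{T}M)$ by bilinearity. So the proof reduces to prescribing the nine blocks canonically and verifying the connection axioms.

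First I would pin down eight of the blocks from the conditions already in force. The Bott condition (\ref{BottV}) gives $\nabla_{\mathfrak{X}}\mathfrak{Y}_a=pr_{\mathcal{V}_a}[\mathfrak{X},\mathfrak{Y}_a]$ and $\nabla_{\mathfrak{Y}_a}\mathfrak{X}=pr_{\mathcal{H}}[\mathfrak{Y}_a,\mathfrak{X}]$ for $\mathfrak{X}\in\mathcal{H}$, $\mathfrak{Y}_a\in\mathcal{V}_a$; the no-multi-mixed-torsion condition (\ref{Bott}) gives $\nabla_{\mathfrak{Y}_a}\mathfrak{Y}_{a'}=pr_{\mathcal{V}_{a'}}[\mathfrak{Y}_a,\mathfrak{Y}_{a'}]$ for $a\neq a'$; and the flat-leaf requirement fixes $\nabla_{\mathcal{V}_a}\mathcal{V}_a$ to be the flat connection of the affine fibers, i.e. $\nabla_{\partial/\partial y^i}(\partial/\partial y^j)=0$ and $\nabla_{\partial/\partial z_i}(\partial/\partial z_j)=0$ in natural coordinates. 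Each of these formulas is a legitimate covariant derivative: $C^\infty$-linearity in the derivation slot holds because the inhomogeneous term of $[f\,\cdot,\cdot]$ points into the ``wrong'' subbundle and is annihilated by the projection, and the Leibniz rule is inherited from the Lie bracket. The crucial consistency point is that $\mathcal{V}$ is integrable, so $[\mathfrak{Y}_a,\mathfrak{Y}_{a'}]\in\Gamma\mathcal{V}$ and the four vertical blocks assemble into the single flat connection of the $\mathcal{V}$-leaves (in the natural vertical frame all the relevant brackets vanish). A direct computation of the torsion then confirms $T_\nabla(\mathfrak{X},\mathfrak{Y})=0$ and $T_\nabla(\mathfrak{Y}_a,\mathfrak{Y}_{a'})=0$, so $\nabla$ has no multi-mixed torsion.

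Only the block $\nabla_{\mathcal{H}}\mathcal{H}$ is left free, and its canonical prescription is the core of the statement. By Remark \ref{obsv} the relation $S\mathfrak{X}=(p_*\mathfrak{X})^v$ shows that $S$ restricts to a bundle isomorphism $S|_{\mathcal{H}}:\mathcal{H}\to\mathcal{V}_1$ (it is injective on $\mathcal{H}$ because $ker\,S=\mathcal{V}$, and $rank\,S=m=rank\,\mathcal{V}_1$). I would use it to transport the already-defined derivative $\nabla_{\mathcal{H}}\mathcal{V}_1$ back to $\mathcal{H}$, setting
\[\nabla_{\mathfrak{X}}\mathfrak{X}'=(S|_{\mathcal{H}})^{-1}\,pr_{\mathcal{V}_1}[\mathfrak{X},S\mathfrak{X}']\qquad(\mathfrak{X},\mathfrak{X}'\in\mathcal{H}).\]
This is manufactured solely from $S$, the Lie bracket and the projections attached to $\mathcal{H}$, hence depends only on $\mathcal{H}$; the same linearity check as above shows it is $C^\infty$-linear in $\mathfrak{X}$ and Leibniz in $\mathfrak{X}'$, so it is a genuine connection block (equivalently, it is the unique block making $S$ parallel along $\mathcal{H}$ in horizontal directions, $\nabla_{\mathfrak{X}}(S\mathfrak{X}')=S(\nabla_{\mathfrak{X}}\mathfrak{X}')$).

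Assembling the nine blocks produces the desired $\nabla$. I expect the main obstacle to be conceptual rather than computational: one must recognize that the imposed conditions determine exactly eight of the nine blocks, verify their mutual compatibility (above all that the flat vertical connection is consistent with the multi-mixed-torsion formulas, which rests on the integrability of $\mathcal{V}$), and then isolate the single remaining block $\nabla_{\mathcal{H}}\mathcal{H}$ and endow it with a coordinate-free recipe. Canonicity, the no-multi-mixed-torsion property, and flatness along the leaves of $\mathcal{V}_1,\mathcal{V}_2,\mathcal{V}$ are then immediate from the construction, the last by the coordinate computation in which the vertical frame $(\partial/\partial y^i,\partial/\partial z_i)$ is parallel.
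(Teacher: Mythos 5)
Your construction is correct, and its decisive ingredient --- the horizontal block $\nabla_{\mathfrak{X}}\mathfrak{X}'=\mathcal{S}^{-1}pr_{\mathcal{V}_1}[\mathfrak{X},\mathcal{S}\mathfrak{X}']$ with $\mathcal{S}=S|_{\mathcal{H}}$ --- is exactly the first formula (\ref{canBH}) of the paper's proof; the mixed blocks forced by (\ref{BottV}) and (\ref{Bott}) also coincide. The one genuine divergence is in the blocks $\nabla_{\mathcal{V}_1}\mathcal{V}_1$ and $\nabla_{\mathcal{V}_2}\mathcal{V}_2$: you decree them to be the flat connection of the affine leaves (zero Christoffel symbols in the natural frames $\partial/\partial y^i$, $\partial/\partial z_i$, which is well defined because the transition functions (\ref{coordtr}) are affine, indeed linear, along the fibers), whereas the paper defines them by the second and third formulas (\ref{canBH}), transporting Lie derivatives through $\mathcal{S}$ and ${}^t\mathcal{S}$, and only afterwards verifies that the projectable arguments are parallel, i.e.\ that the restriction to the leaves is the flat affine connection. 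The two prescriptions agree --- for instance $pr_{\mathcal{H}}[\partial/\partial y^i,\mathcal{S}^{-1}(\partial/\partial y^j)]=0$ because the bracket of $\partial/\partial y^i$ with the horizontal lift $X_j$ is vertical --- so nothing is lost; your route makes the flat-leaf assertion tautological at the price of a (routine) well-definedness check for the leafwise flat connection, while the paper's route gives uniform bracket-type formulas for every block and makes the dependence on $\mathcal{H}$ alone manifest from the outset.
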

\begin{proof} Consider the isomorphism $\mathcal{S}=S|_{\mathcal{H}}:\mathcal{H}\rightarrow\mathcal{V}_1$ and the transposed isomorphism $\hspace{1pt}^t\hspace{-1pt} \mathcal{S}:\mathcal{V}_2\rightarrow\mathcal{H}^*$
(remember that $\mathcal{V}_2\approx\mathcal{V}_1^*$). Then, add to (\ref{BottV}) and (\ref{Bott}) the following covariant derivatives
\begin{equation}\label{canBH} \begin{array}{c} \nabla_{\mathfrak{X}}\mathfrak{X}'=\mathcal{S}^{-1}pr_{\mathcal{V}_1}
[\mathfrak{X},\mathcal{S}\mathfrak{X}'],\, \nabla_{\mathfrak{Y}_1}\mathfrak{Y}'_1=\mathcal{S}pr_{\mathcal{H}}
[\mathfrak{Y}_1,\mathcal{S}^{-1}\mathfrak{Y}'_1],\vspace*{2mm}\\ \nabla_{\mathfrak{Y}_2}\mathfrak{Y}'_2= (\hspace{1pt}^t \hspace{-1pt}\mathcal{S})^{-1} pr_{\mathcal{H}^*} L_{\mathfrak{Y}_2}(\hspace{1pt}^t\hspace{-1pt}\mathcal{S}
\mathfrak{Y}'_2),\end{array}\end{equation} $\forall\mathfrak{X},\mathfrak{X}'\in
\Gamma\mathcal{H},\mathfrak{Y}_a,\mathfrak{Y}'_a\in
\Gamma\mathcal{V}_a$ and $ pr_{\mathcal{H}^*}$ is defined by means of (\ref{descbig}). The result is a linear connection on $\mathfrak{T}M$, which depends only on $\mathcal{H}$. The affine structure of the leaves of $\mathcal{V}_1,\mathcal{V}_2,\mathcal{V}$ is shown by the coordinate transformations (\ref{coordtr}) and it is easy to see that the parallel vector fields of this structure are $\mathfrak{Y}'_1\in\mathcal{V}_1$ such that $\mathcal{S}^{-1}\mathfrak{Y}'_1$ is projectable, $\mathfrak{Y}'_2\in\mathcal{V}_2$ such that $\hspace{-1pt}\mathcal{S}\mathfrak{Y}'_2$ is projectable and sums of such vector fields, respectively. Then, (\ref{canBH}) implies $\nabla_{\mathfrak{Y}_1}\mathfrak{Y}'_1=0, \nabla_{\mathfrak{Y}_2}\mathfrak{Y}'_2=0$ and (\ref{Bott})	 implies $\nabla_{\mathfrak{X}_a}\mathfrak{Y}'_{a'}=0$ (the last conclusion follows by using a parallel extension of the point-value of $\mathfrak{X}_a$, which leads to vanishing brackets in the right hand side of (\ref{Bott})).
\end{proof}

Any linear connection $D$ on $\mathfrak{T}M$ has a well defined deformation into a	connection $\bar{\nabla}^D$ with no multi-mixed torsion, which is obtained by adding to (\ref{BottV}) and (\ref{Bott}) the first equation (\ref{BottDV}) and the derivatives
\begin{equation}\label{DBott}
\bar{\nabla}^D_{\mathfrak{X}_a}\mathfrak{Y}_{a}
=pr_{\mathcal{V}_{a}}D_{\mathfrak{X}_a}\mathfrak{Y}_{a}.
\end{equation}
Generally, $\bar{\nabla}^D\neq\nabla^D$.
If $D$ has no torsion, the torsion of $\bar{\nabla}^D$ is still given by (\ref{torsBott}).

We also recall the following notion from foliation theory
\begin{defin}\label{conexpr} {\rm A Bott connection $\nabla$ is said to be {\it projectable} if for any two projectable, horizontal vector fields $\mathfrak{X},\mathfrak{X}'$, $\nabla_{\mathfrak{X}}\mathfrak{X}'$ is projectable.}\end{defin}
\begin{example}\label{excpr} {\rm If $\mathcal{H}$ is the horizontal space defined in Example \ref{exHGamma}, then, the corresponding canonical connection given by Proposition \ref{canonicBott} is projectable. The result follows by applying the first formula (\ref{canBH}) to the vector fields
(\ref{Gammahoriz}).}\end{example}

Concerning curvature, we can extend results from foliation theory and we have
\begin{prop}\label{propgencurb} The curvature $R_\nabla$ of a connection $\nabla$ without mixed torsion has the following properties
\begin{equation}\label{curb0} \begin{array}{l}
R_{\nabla}(\mathfrak{Y},\mathfrak{Y}')\mathfrak{X}=0, \hspace*{2mm} R_{\nabla}(\mathfrak{Y},\mathfrak{X})\mathfrak{X}'=
pr_{\mathcal{H}}[\mathfrak{Y},\nabla_{\mathfrak{X}}\mathfrak{X}'] \vspace*{2mm}\\
R_{\nabla}(\mathfrak{X},\mathfrak{X}')\mathfrak{Y}=
T_\nabla(\mathfrak{Y},R_{\mathcal{H}}(\mathfrak{X},\mathfrak{X}'))
-\nabla_{\mathfrak{Y}}(R_{\mathcal{H}}(\mathfrak{X},\mathfrak{X}')),
\end{array}\end{equation}
where $\mathfrak{X},\mathfrak{X}'\in\mathcal{H},\, \mathfrak{Y},\mathfrak{Y}'\in\mathcal{V}$, $R_{\mathcal{H}}$ is the Ehressmann curvature of $\mathcal{H}$ and the right hand sides are computed by using projectable fields $\mathfrak{X}_1,\mathfrak{X}_2$.
If $\nabla$ has no multi-mixed torsion one also has
\begin{equation}\label{curb01} R_{\nabla}(\mathfrak{Y}_a,\mathfrak{Y}'_a)\mathfrak{Y}_{a'}=0, \end{equation}
where the indices are like in (\ref{Bott}) and $\mathfrak{Y}_a\in\mathcal{V}_a$.
\end{prop}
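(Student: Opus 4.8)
The plan is to verify the three curvature identities \eqref{curb0} and the identity \eqref{curb01} by working directly from the definition $R_\nabla(\mathfrak{Z}_1,\mathfrak{Z}_2) = \nabla_{\mathfrak{Z}_1}\nabla_{\mathfrak{Z}_2} - \nabla_{\mathfrak{Z}_2}\nabla_{\mathfrak{Z}_1} - \nabla_{[\mathfrak{Z}_1,\mathfrak{Z}_2]}$, repeatedly substituting the Bott-connection formulas \eqref{BottV'} (equivalently \eqref{BottV}), the no-multi-mixed-torsion formula \eqref{Bott}, and the torsion expression \eqref{torsBott} that identifies the vertical projection of a horizontal bracket with the Ehresmann curvature $R_{\mathcal{H}}$. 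As the proposition itself indicates, the right-hand sides are computed on projectable horizontal fields and arbitrary vertical fields, so I would fix such representatives from the start; since all four identities are tensorial in each argument, it suffices to evaluate them on projectable/basis fields and the general case follows by $C^\infty$-linearity.

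First I would establish the opening identity $R_\nabla(\mathfrak{Y},\mathfrak{Y}')\mathfrak{X}=0$. Using \eqref{BottV'} in the form $\nabla_{\mathfrak{Y}}\mathfrak{X}=0$ for projectable horizontal $\mathfrak{X}$, the first two terms $\nabla_{\mathfrak{Y}}\nabla_{\mathfrak{Y}'}\mathfrak{X}$ and $\nabla_{\mathfrak{Y}'}\nabla_{\mathfrak{Y}}\mathfrak{X}$ vanish outright; the remaining term $-\nabla_{[\mathfrak{Y},\mathfrak{Y}']}\mathfrak{X}$ also vanishes because $[\mathfrak{Y},\mathfrak{Y}']$ is vertical (the vertical bundle being involutive) and $\nabla_{\text{vertical}}\mathfrak{X}=0$ again. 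Next, for $R_\nabla(\mathfrak{Y},\mathfrak{X})\mathfrak{X}'$ I would expand the three terms: $\nabla_{\mathfrak{X}}\mathfrak{X}'$ is horizontal, so $\nabla_{\mathfrak{Y}}\nabla_{\mathfrak{X}}\mathfrak{X}'=pr_{\mathcal{H}}[\mathfrak{Y},\nabla_{\mathfrak{X}}\mathfrak{X}']$ by \eqref{BottV'}; the term $\nabla_{\mathfrak{X}}\nabla_{\mathfrak{Y}}\mathfrak{X}'$ vanishes since $\nabla_{\mathfrak{Y}}\mathfrak{X}'=0$; and the bracket term $-\nabla_{[\mathfrak{Y},\mathfrak{X}]}\mathfrak{X}'$ should vanish because for $\mathfrak{X}$ projectable and $\mathfrak{Y}$ vertical the bracket $[\mathfrak{Y},\mathfrak{X}]$ is vertical, killing the derivative. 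This leaves exactly the stated $pr_{\mathcal{H}}[\mathfrak{Y},\nabla_{\mathfrak{X}}\mathfrak{X}']$.

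The third identity, $R_\nabla(\mathfrak{X},\mathfrak{X}')\mathfrak{Y}$, is where I expect the main obstacle, since it is the only one mixing the horizontal curvature $R_{\mathcal{H}}$ into a vertical output. The key input is \eqref{torsBott}: for projectable horizontal fields, $pr_{\mathcal{V}}[\mathfrak{X},\mathfrak{X}']=-R_{\mathcal{H}}(\mathfrak{X},\mathfrak{X}')$, so the bracket $[\mathfrak{X},\mathfrak{X}']$ decomposes into a horizontal projectable part plus the vertical piece $-R_{\mathcal{H}}(\mathfrak{X},\mathfrak{X}')$. I would expand $-\nabla_{[\mathfrak{X},\mathfrak{X}']}\mathfrak{Y}$ along this decomposition, using $\nabla_{\mathfrak{X}''}\mathfrak{Y}=pr_{\mathcal{V}}[\mathfrak{X}'',\mathfrak{Y}]$ on the horizontal part and $\nabla_{R_{\mathcal{H}}}\mathfrak{Y}$ (a vertical-vertical derivative) on the vertical part; the two double-derivative terms $\nabla_{\mathfrak{X}}\nabla_{\mathfrak{X}'}\mathfrak{Y}$ and $\nabla_{\mathfrak{X}'}\nabla_{\mathfrak{X}}\mathfrak{Y}$ must then be reorganized, via the Jacobi identity for the brackets and the definition of the torsion $T_\nabla$, so that the horizontal-bracket contributions cancel and what survives is precisely $T_\nabla(\mathfrak{Y},R_{\mathcal{H}}(\mathfrak{X},\mathfrak{X}'))-\nabla_{\mathfrak{Y}}R_{\mathcal{H}}(\mathfrak{X},\mathfrak{X}')$. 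The delicate bookkeeping here is matching the sign and the torsion term, and ensuring that the formula is genuinely tensorial despite the appearance of $\nabla_{\mathfrak{Y}}R_{\mathcal{H}}$ (which is legitimate because $R_{\mathcal{H}}(\mathfrak{X},\mathfrak{X}')$ is a well-defined vertical field). Finally, for \eqref{curb01} under the no-multi-mixed-torsion hypothesis, I would repeat the argument of the first identity but one level finer: with $\mathfrak{Y}_a,\mathfrak{Y}'_a\in\mathcal{V}_a$ and output in $\mathcal{V}_{a'}$, formula \eqref{Bott} gives $\nabla_{\mathfrak{Y}_a}\mathfrak{Y}_{a'}=pr_{\mathcal{V}_{a'}}[\mathfrak{Y}_a,\mathfrak{Y}_{a'}]$, and since each $\mathcal{V}_a$ is itself an involutive foliation with $[\mathcal{V}_a,\mathcal{V}_a]\subseteq\mathcal{V}_a$, both double-derivative terms and the bracket term $[\mathfrak{Y}_a,\mathfrak{Y}'_a]\in\mathcal{V}_a$ produce only $\mathcal{V}_a$-components that are annihilated when projected to $\mathcal{V}_{a'}$, forcing the curvature to vanish. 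I expect it suffices, as the proposition suggests throughout, to verify each identity on the natural coordinate and horizontal basis fields, where all brackets and projections are explicit.
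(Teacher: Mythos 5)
Your proposal follows essentially the same route as the paper's (one‑line) proof, which observes that by tensoriality one may take $\mathfrak{X},\mathfrak{X}'$ to be $p$-projectable (resp.\ $\mathfrak{Y}_{a'}$ to be $\mathcal{V}_a$-projectable, e.g.\ a coordinate field) and then read the identities off (\ref{BottV}), (\ref{BottV'}) and (\ref{Bott}); your treatment of the first, second and fourth identities is exactly this and is correct. One concrete correction is needed in your ``key input'' for the third identity: formula (\ref{torsBott}) reads $T_{\nabla}(\mathfrak{X},\mathfrak{X}')=-pr_{\mathcal{V}}[\mathfrak{X},\mathfrak{X}']=-R_{\mathcal{H}}(\mathfrak{X},\mathfrak{X}')$, so $pr_{\mathcal{V}}[\mathfrak{X},\mathfrak{X}']=+R_{\mathcal{H}}(\mathfrak{X},\mathfrak{X}')$, not $-R_{\mathcal{H}}(\mathfrak{X},\mathfrak{X}')$ as you assert. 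With the correct sign the bookkeeping you defer does close: writing $[\mathfrak{X},\mathfrak{X}']=H+V$ with $H=pr_{\mathcal{H}}[\mathfrak{X},\mathfrak{X}']$ (which is projectable) and $V=R_{\mathcal{H}}(\mathfrak{X},\mathfrak{X}')$, the Jacobi identity gives $R_{\nabla}(\mathfrak{X},\mathfrak{X}')\mathfrak{Y}=[[\mathfrak{X},\mathfrak{X}'],\mathfrak{Y}]-[H,\mathfrak{Y}]-\nabla_{V}\mathfrak{Y}=[V,\mathfrak{Y}]-\nabla_{V}\mathfrak{Y}=T_{\nabla}(\mathfrak{Y},V)-\nabla_{\mathfrak{Y}}V$, which is the stated identity; had you kept your sign you would land on its negative. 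So the approach is sound and matches the paper, but the stated intermediate fact must be corrected before the third identity can actually be verified.
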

\begin{proof} Since only the point-wise values of the arguments count, we get
(\ref{curb0}) from (\ref{BottV'}) by assuming that $\mathfrak{X}$ is $\mathcal{V}$-projectable.
Similarly, we get (\ref{curb01}) from (\ref{Bott}) by assuming that $\mathfrak{Y}_{a'}$ is $\mathcal{V}_a$-projectable.
\end{proof}
\begin{corol}\label{corolcurbpr} For a projectable connection one has $R_\nabla(\mathfrak{Y},\mathfrak{X})\mathfrak{X}'=0$.
\end{corol}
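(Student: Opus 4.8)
The plan is to deduce the statement immediately from the second identity of (\ref{curb0}) in Proposition \ref{propgencurb}. For any connection without mixed torsion, that identity reads
$$R_{\nabla}(\mathfrak{Y},\mathfrak{X})\mathfrak{X}'=pr_{\mathcal{H}}[\mathfrak{Y},\nabla_{\mathfrak{X}}\mathfrak{X}'],$$
where the right-hand side is to be evaluated on $p$-projectable horizontal representatives of the arguments. Because the curvature is tensorial, only the point-values of $\mathfrak{Y},\mathfrak{X},\mathfrak{X}'$ enter, so I may freely assume $\mathfrak{X},\mathfrak{X}'\in\Gamma\mathcal{H}$ to be projectable; this is exactly the regime in which (\ref{curb0}) was stated.

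First I would invoke the definition of a projectable connection (Definition \ref{conexpr}): since $\mathfrak{X},\mathfrak{X}'$ are projectable horizontal vector fields, $\nabla_{\mathfrak{X}}\mathfrak{X}'$ is again projectable, and it is horizontal because every Bott connection preserves $\mathcal{H}$. Writing $\mathfrak{W}=\nabla_{\mathfrak{X}}\mathfrak{X}'$, I then have a projectable horizontal field.

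The key step is to use the foliation-theoretic characterization of projectability with respect to the vertical foliation $\mathcal{V}$: a vector field $\mathfrak{W}$ is projectable precisely when $[\mathfrak{Y},\mathfrak{W}]\in\Gamma\mathcal{V}$ for every $\mathfrak{Y}\in\Gamma\mathcal{V}$. Applying this to $\mathfrak{W}=\nabla_{\mathfrak{X}}\mathfrak{X}'$ and an arbitrary vertical $\mathfrak{Y}$, the bracket $[\mathfrak{Y},\nabla_{\mathfrak{X}}\mathfrak{X}']$ lies in $\mathcal{V}$, so its horizontal part vanishes. Substituting $pr_{\mathcal{H}}[\mathfrak{Y},\nabla_{\mathfrak{X}}\mathfrak{X}']=0$ into the displayed formula gives $R_{\nabla}(\mathfrak{Y},\mathfrak{X})\mathfrak{X}'=0$.

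I do not expect any genuine obstacle: the corollary collapses to a single substitution once the previous proposition is granted. The only point deserving care is the passage between the two meanings of \emph{projectable} — the property of the connection in Definition \ref{conexpr} and the equivalent statement that brackets of $\mathfrak{W}$ with vertical fields stay vertical — which is precisely what forces the horizontal projection in (\ref{curb0}) to drop out.
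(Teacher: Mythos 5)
Your argument is correct and is exactly the paper's intended proof: the author's own justification is the single sentence that the result follows from the second formula of (\ref{curb0}), and your expansion — that for a projectable connection $\nabla_{\mathfrak{X}}\mathfrak{X}'$ is a projectable horizontal field, hence its bracket with any vertical $\mathfrak{Y}$ is vertical and the horizontal projection vanishes — is precisely the substitution being left implicit. No gaps; you have merely made explicit the foliation-theoretic characterization of projectability that the paper takes for granted.
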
 \begin{proof} The result follows from the second formula (\ref{curb0}).\end{proof}
\begin{prop}\label{propcurbVB} The curvature of the Vr\u anceanu-Bott connection $\nabla^D$, where $D$ is a torsionless linear connection on $\mathfrak{T}M$, also has the following properties
\begin{equation}\label{curb10} \begin{array}{l}
R_{\nabla^D}(\mathfrak{Y},\mathfrak{X})\mathfrak{X}'=
R_{\nabla}(\mathfrak{Y},\mathfrak{X}')\mathfrak{X},\vspace*{2mm}\\
R_{\nabla^D}(\mathfrak{X},\mathfrak{X}')\mathfrak{Y}= \nabla^D_{\mathfrak{Y}}(T_{\nabla^D}(\mathfrak{X},\mathfrak{X}')),	 \vspace*{2mm}\\
\sum_{Cycl(\mathfrak{X},\mathfrak{X}',\mathfrak{X}'')}
R_{\nabla^D}(\mathfrak{X},\mathfrak{X}')\mathfrak{X}''=0,\vspace*{2mm}\\ \sum_{Cycl(\mathfrak{Y},\mathfrak{Y}',\mathfrak{Y}'')}
R_{\nabla^D}(\mathfrak{Y},\mathfrak{Y}')\mathfrak{Y}''=0,
\vspace*{2mm}\\ R_{\nabla^D}(\mathfrak{X},\mathfrak{Y}) \mathfrak{Y}'=R_{\nabla^D}(\mathfrak{X},\mathfrak{Y}') \mathfrak{Y},
\end{array}\end{equation}
where all $\mathfrak{X}\in\mathcal{H}$ (and projectable when needed) and all
$\mathfrak{Y}\in\mathcal{V}$. The same properties hold for the connection with no multi-mixed torsion $\bar{\nabla}^D$ defined by the torsionless connection $D$. In particular, for $\bar{\nabla}^D$ we have $R_{\nabla^D}(\mathfrak{Y}_a,\mathfrak{Y}'_a)\mathfrak{Y}=0$,
$\forall\mathfrak{Y}_a\in\mathcal{V}_a,\mathfrak{Y}\in\mathcal{V}$, $a=1,2$.
\end{prop}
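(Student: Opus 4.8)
The plan is to pin down the full torsion tensor of $\nabla^D$ first and then read off all five identities from the first Bianchi identity for a connection with torsion, together with the curvature formulas (\ref{curb0}) and (\ref{curb01}) already proved in Proposition \ref{propgencurb}. By (\ref{torsBott}) the torsion of $\nabla^D$ depends only on the horizontal parts of its arguments: it equals $-R_{\mathcal{H}}(\mathfrak{X},\mathfrak{X}')\in\mathcal{V}$ on a horizontal pair, it vanishes on every mixed pair (the Bott property), and it vanishes on every vertical pair, since $D$ is torsionless and $\mathcal{V}$ is integrable, whence $\nabla^D_{\mathfrak{Y}}\mathfrak{Y}'-\nabla^D_{\mathfrak{Y}'}\mathfrak{Y}=pr_{\mathcal{V}}[\mathfrak{Y},\mathfrak{Y}']=[\mathfrak{Y},\mathfrak{Y}']$. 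Thus $T_{\nabla^D}$ is vertical-valued and concentrated on the horizontal-horizontal block, which is precisely what makes the Bianchi computations collapse. Throughout I use that curvature is tensorial, so each relation may be checked on projectable horizontal fields $\mathfrak{X}$, for which (\ref{BottV'}) gives $\nabla_{\mathfrak{Y}}\mathfrak{X}=0$, and on convenient vertical fields. The first Bianchi identity I rely on is $\mathfrak{S}\,R(\mathfrak{Z}_1,\mathfrak{Z}_2)\mathfrak{Z}_3=\mathfrak{S}\bigl((\nabla_{\mathfrak{Z}_1}T)(\mathfrak{Z}_2,\mathfrak{Z}_3)+T(T(\mathfrak{Z}_1,\mathfrak{Z}_2),\mathfrak{Z}_3)\bigr)$, where $\mathfrak{S}$ denotes the cyclic sum.

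I would prove the second identity first, as it is the most direct. Starting from the third line of (\ref{curb0}) and substituting $R_{\mathcal{H}}(\mathfrak{X},\mathfrak{X}')=-T_{\nabla^D}(\mathfrak{X},\mathfrak{X}')$, the term $T_{\nabla^D}(\mathfrak{Y},T_{\nabla^D}(\mathfrak{X},\mathfrak{X}'))$ is a torsion of two vertical vectors, hence zero, and only $\nabla^D_{\mathfrak{Y}}\bigl(T_{\nabla^D}(\mathfrak{X},\mathfrak{X}')\bigr)$ survives, which is the second identity. For the first identity I would apply the Bianchi identity to the triple $(\mathfrak{Y},\mathfrak{X},\mathfrak{X}')$. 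Every torsion-torsion term vanishes, since each contains either a mixed or a vertical-vertical torsion, and of the three covariant-derivative terms only $(\nabla^D_{\mathfrak{Y}}T_{\nabla^D})(\mathfrak{X},\mathfrak{X}')$ can be nonzero; choosing $\mathfrak{X},\mathfrak{X}'$ projectable makes its correction terms drop out, so it reduces to $\nabla^D_{\mathfrak{Y}}(T_{\nabla^D}(\mathfrak{X},\mathfrak{X}'))=R_{\nabla^D}(\mathfrak{X},\mathfrak{X}')\mathfrak{Y}$ by the second identity. The Bianchi relation then reads $R_{\nabla^D}(\mathfrak{Y},\mathfrak{X})\mathfrak{X}'+R_{\nabla^D}(\mathfrak{X},\mathfrak{X}')\mathfrak{Y}+R_{\nabla^D}(\mathfrak{X}',\mathfrak{Y})\mathfrak{X}=R_{\nabla^D}(\mathfrak{X},\mathfrak{X}')\mathfrak{Y}$, and skew-symmetry in the first two slots gives $R_{\nabla^D}(\mathfrak{Y},\mathfrak{X})\mathfrak{X}'=R_{\nabla^D}(\mathfrak{Y},\mathfrak{X}')\mathfrak{X}$.

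The remaining identities follow from the same Bianchi identity on homogeneous triples. For $(\mathfrak{X},\mathfrak{X}',\mathfrak{X}'')$ all horizontal, the torsion-torsion terms vanish (inner torsion vertical, outer torsion mixed) while the cyclic sum of $(\nabla T)$-terms is vertical; the left-hand side is horizontal because $\nabla^D$ preserves $\mathcal{H}$, so both sides are zero, giving the third identity. For $(\mathfrak{Y},\mathfrak{Y}',\mathfrak{Y}'')$ all vertical, every torsion that appears is a vertical-vertical torsion and hence vanishes, so the fourth identity is immediate. For $(\mathfrak{X},\mathfrak{Y},\mathfrak{Y}')$ the whole right-hand side vanishes for the same type reasons, while on the left the term $R_{\nabla^D}(\mathfrak{Y},\mathfrak{Y}')\mathfrak{X}$ vanishes by the first line of (\ref{curb0}); the surviving two terms, together with skew-symmetry, yield the fifth identity $R_{\nabla^D}(\mathfrak{X},\mathfrak{Y})\mathfrak{Y}'=R_{\nabla^D}(\mathfrak{X},\mathfrak{Y}')\mathfrak{Y}$.

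Finally, for $\bar{\nabla}^D$ I would note that, when $D$ is torsionless, it carries the same torsion (\ref{torsBott}), hence the identical three-block structure, and that it is a connection with no multi-mixed torsion, so Proposition \ref{propgencurb}, in particular (\ref{curb0}) and (\ref{curb01}), applies to it verbatim; the four computations above then reproduce (\ref{curb10}) unchanged. The extra relation $R_{\bar\nabla^D}(\mathfrak{Y}_a,\mathfrak{Y}'_a)\mathfrak{Y}=0$ follows from (\ref{curb01}), since $\bar\nabla^D$ preserves the splitting $\mathcal{V}=\mathcal{V}_1\oplus\mathcal{V}_2$ and (\ref{curb01}) annihilates the $\mathcal{V}_{a'}$-component. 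The main obstacle, and the only place needing care, is the bookkeeping: in each Bianchi expansion one must correctly classify every term as mixed, vertical-vertical, or horizontal-horizontal and choose projectable representatives so that the spurious correction terms in $(\nabla T)$ disappear. Once the torsion structure is fixed, this is routine rather than deep.
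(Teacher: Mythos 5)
Your proof of the five identities in (\ref{curb10}) follows the same route as the paper: everything is reduced to the torsion structure (\ref{torsBott}) (vertical--valued, supported on the horizontal--horizontal block, zero on mixed pairs by the Bott property and on vertical pairs because $D$ is torsionless and $\mathcal{V}$ is integrable) together with the first Bianchi identity with torsion evaluated on projectable representatives. The paper states exactly this and only writes out the all-horizontal case, so your case-by-case bookkeeping is a correct filling-in of the details. The only divergence of substance is the first identity, which the paper reads off directly from the second line of (\ref{curb0}) together with $\nabla^D_{\mathfrak{X}}\mathfrak{X}'-\nabla^D_{\mathfrak{X}'}\mathfrak{X}=pr_{\mathcal{H}}[\mathfrak{X},\mathfrak{X}']$ (a consequence of $D$ being torsionless), while you obtain it from the Bianchi identity on the triple $(\mathfrak{Y},\mathfrak{X},\mathfrak{X}')$ combined with the already-proved second identity; both are valid, and yours has the advantage of treating all five identities uniformly.

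The one place where your argument has a genuine gap is the final assertion. The claim is $R_{\bar{\nabla}^D}(\mathfrak{Y}_a,\mathfrak{Y}'_a)\mathfrak{Y}=0$ for \emph{all} $\mathfrak{Y}\in\mathcal{V}$, and after decomposing $\mathfrak{Y}$ along $\mathcal{V}=\mathcal{V}_1\oplus\mathcal{V}_2$ there are two components to kill. Formula (\ref{curb01}) disposes of the $\mathcal{V}_{a'}$-component, which is all your sentence addresses; the fact that $\bar{\nabla}^D$ preserves the splitting only tells you that $R_{\bar{\nabla}^D}(\mathfrak{Y}_a,\mathfrak{Y}'_a)$ maps $\mathcal{V}_a$ into $\mathcal{V}_a$, not that this block vanishes, so the case $R_{\bar{\nabla}^D}(\mathfrak{Y}_a,\mathfrak{Y}'_a)\mathfrak{Y}''_a$ with all three arguments in $\mathcal{V}_a$ is left unproved --- and the fourth (cyclic) identity alone does not force individual terms of a cyclic sum to vanish. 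The paper's own proof is also terse here, but it explicitly invokes an additional ingredient, namely the fourth relation of (\ref{curb10}) applied to triples with $\mathfrak{Y},\mathfrak{Y}'\in\mathcal{V}_a$ and $\mathfrak{Y}''\in\mathcal{V}_{a'}$ in combination with (\ref{curb01}) and the decomposition of the third argument. You should either reproduce and complete that step or supply an independent argument for the $\mathcal{V}_a$--$\mathcal{V}_a$ block; as written, neither (\ref{curb01}) nor splitting-preservation reaches it.
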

\begin{proof} The first and second formulas (\ref{curb10}) are a consequence of (\ref{curb0}) and of the expression (\ref{torsBott}) of the torsion. The three other formulas follow from (\ref{torsBott}) and the Bianchi identity
(\cite{KN}, vol. I)
$$ \sum_{Cycl(\mathfrak{X},\mathfrak{X}'\mathfrak{X}'')}
[R_{\nabla^D}(\mathfrak{X},\mathfrak{X}')\mathfrak{X}'' -
T(T(\mathfrak{X},\mathfrak{X}'),\mathfrak{X}'')-
(\nabla_{\mathfrak{X}}T)(\mathfrak{X}',\mathfrak{X}'')]=0.$$
We give some details for the case where all the arguments are horizontal, while assuming that they also are projectable.
Then, the first torsion term of the Bianchi formula vanishes since its first argument is vertical and the second is horizontal. If we use projectable vector fields $\mathfrak{X},\mathfrak{X}',\mathfrak{X}''$, the second torsion term of the Bianchi formula must be vertical and, since the curvature term is horizontal, we are done. The same properties hold for the connection $\bar{\nabla}^D$ because it has the same torsion like $\nabla^D$. The last assertion of the proposition follows by using the fourth relation (\ref{curb10}) for $\mathfrak{Y},\mathfrak{Y}'\in\mathcal{V}_a,\mathfrak{Y}'' \in\mathcal{V}_{a'}$ and by decomposing $\mathfrak{Y}$ into its $\mathcal{V}_a$-components.\end{proof}
\section{Metrics and double fields}
The general, local expression of a pseudo-Riemannian metric on the big-tangent manifold $\mathfrak{T}M$ is
$$ \begin{array}{r}\mathfrak{g}=
\hspace{1pt}^1\hspace{-1pt}g_{ij}dx^i\odot dx^j + \hspace{1pt}^2\hspace{-1pt}g_{ij}dx^i\odot dy^j+
\hspace{1pt}^3\hspace{-1pt}g^{j}_{i}dx^i\odot dz_j\vspace*{2mm}\\ +\hspace{1pt}^4\hspace{-1pt}g_{ij}dy^i\odot dy^j+
\hspace{1pt}^5\hspace{-1pt}g^{j}_{i}dy^i\odot dz_j+\hspace{1pt}^6\hspace{-1pt}g^{ij}dz_i\odot dz_j,\end{array}$$
where the coefficients $\hspace{1pt}^1\hspace{-1pt}g, \hspace{1pt}^4\hspace{-1pt}g, \hspace{1pt}^6\hspace{-1pt}g$ are symmetric.

Besides, we will always assume that the metric has a non degenerate restriction to the vertical bundle $\mathcal{V}$, which implies that the orthogonal bundle $\mathcal{H}_{\mathfrak{g}}\perp_{\mathfrak{g}}\mathcal{V}$ is a horizontal bundle and $\mathfrak{g}|_{\mathcal{H}}$ is non degenerate too. Hereafter, always, in the presence of a metric, we will use the horizontal bundle $\mathcal{H}=\mathcal{H}_{\mathfrak{g}}$.
\begin{example}\label{exSasaki} {\rm The formula
$$ \mathfrak{s}=g_{ij}dx^i\otimes dx^j+ g_{ij}Dy^i\otimes Dy^j+g^{ij}Dz_i\otimes Dz_j,
$$
where $g$ is a pseudo-Riemannian metric on $M$, $D$ is the Levi-Civita connection of $g$ and $Dy^j,Dz_i$ are the usual expressions of the covariant differential of a contravariant and covariant vector field, respectively, defines a metric on $\mathfrak{T}M$, which is non degenerate on $\mathcal{V}$ and will be called the {\it Sasaki metric}.}\end{example}
\begin{example}\label{exSasakitype} {\rm Let $g$ be a pseudo-Riemannian metric on $M$, $\mathcal{H}$ an arbitrary horizontal bundle and $(dx^i,\theta^i,\kappa_i)$ the corresponding cobasis (\ref{hcobase}). Then, the formula
$$\mathfrak{g}= g_{ij}dx^i\otimes dx^j+ g_{ij}\theta^i\otimes\theta^j +g^{ij}\kappa_i\otimes\kappa_j$$ defines a metric on $\mathfrak{T}M$, which is non degenerate on $\mathcal{V}$ and will be called a {\it Sasaki-type metric}. In particular, if
$g_{ij}=\partial^2\mathcal{L}/\partial y^i\partial y^j$, where $\mathcal{L}$ is a regular Lagrangian, we get a metric $\mathfrak{g}_{\mathcal{L}}$.}\end{example}

We recall the following result of foliation theory \cite{{V71},{V73}}:
\begin{prop}\label{prcancon}
Let $D$ be the Levi-Civita connection of $\mathfrak{g}$. The corresponding Vr\u anceanu-Bott connection $\nabla^D$ defined by (\ref{BottDV}) is the unique connection such that: i) the subbundles $\mathcal{H}$ and $\mathcal{V}$ are preserved ii) parallel translations along curves that are tangent to either $\mathcal{H}$ or $\mathcal{V}$ preserve the restriction of $\mathfrak{g}$ to $\mathcal{H},\mathcal{V}$, respectively, iii) the restrictions of the torsion to $\mathcal{H}$ and $\mathcal{V}$ take values in $\mathcal{V}$ and $\mathcal{H}$, respectively.\end{prop}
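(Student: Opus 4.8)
The plan is to establish existence and uniqueness separately, using throughout the standing hypothesis that $\mathcal{H}=\mathcal{H}_{\mathfrak{g}}=\mathcal{V}^{\perp_{\mathfrak{g}}}$, so that $\mathfrak{g}$ is block-diagonal with respect to $T\mathfrak{T}M=\mathcal{H}\oplus\mathcal{V}$ and both blocks are non-degenerate. For existence I would check that the $\nabla^D$ assembled from (\ref{BottV}) and (\ref{BottDV}) satisfies i)--iii). Property i) is immediate from the block form of the four defining formulas. For ii), take $\mathfrak{X}\in\mathcal{H}$ and horizontal sections $s,s'$; since $\nabla^D_{\mathfrak{X}}s=pr_{\mathcal{H}}D_{\mathfrak{X}}s$ and $s'\in\mathcal{H}$, orthogonality gives $\mathfrak{g}(\nabla^D_{\mathfrak{X}}s,s')=\mathfrak{g}(D_{\mathfrak{X}}s,s')$, and the Levi-Civita property of $D$ yields $\mathfrak{X}\,\mathfrak{g}(s,s')=\mathfrak{g}(\nabla^D_{\mathfrak{X}}s,s')+\mathfrak{g}(s,\nabla^D_{\mathfrak{X}}s')$; the same computation on $\mathcal{V}$ handles the vertical case. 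Property iii) is the torsion computation already recorded in (\ref{torsBott}): on $\mathcal{H}$ the torsion equals $-R_{\mathcal{H}}$ and is $\mathcal{V}$-valued, while on $\mathcal{V}$ the integrability of the vertical foliation makes $[\mathfrak{Y},\mathfrak{Y}']$ vertical, so the torsion vanishes and is trivially $\mathcal{H}$-valued.

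For uniqueness, let $\nabla$ be any connection obeying i)--iii). Condition i) splits $\nabla$ into four $C^\infty$-tensorial blocks: the pure ones $A(\mathfrak{X},\mathfrak{X}')=\nabla_{\mathfrak{X}}\mathfrak{X}'\in\mathcal{H}$ and $E(\mathfrak{Y},\mathfrak{Y}')=\nabla_{\mathfrak{Y}}\mathfrak{Y}'\in\mathcal{V}$, and the two mixed ones $\nabla_{\mathfrak{X}}\mathfrak{Y}\in\mathcal{V}$, $\nabla_{\mathfrak{Y}}\mathfrak{X}\in\mathcal{H}$. I would first pin down $A$ by a Koszul argument internal to $\mathcal{H}$: for $\mathfrak{X},\mathfrak{X}',\mathfrak{X}''\in\mathcal{H}$, condition ii) supplies metric compatibility of $A$ with $\mathfrak{g}|_{\mathcal{H}}$ in horizontal directions, while the $\mathcal{H}$-part of iii) forces $A(\mathfrak{X},\mathfrak{X}')-A(\mathfrak{X}',\mathfrak{X})=pr_{\mathcal{H}}[\mathfrak{X},\mathfrak{X}']$. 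Cyclically combining the three compatibility identities and inserting this antisymmetry expresses $2\mathfrak{g}(A(\mathfrak{X},\mathfrak{X}'),\mathfrak{X}'')$ through $\mathfrak{g}|_{\mathcal{H}}$-pairings and brackets; since $\mathfrak{X}''$ is horizontal the projections $pr_{\mathcal{H}}$ inside these pairings may be dropped, and non-degeneracy of $\mathfrak{g}|_{\mathcal{H}}$ determines $A$. Comparing with the true Koszul formula for $D$ gives $A=pr_{\mathcal{H}}D|_{\mathcal{H}\times\mathcal{H}}$, matching $\nabla^D$. The identical argument on $\mathcal{V}$, using that $[\mathfrak{Y},\mathfrak{Y}']$ is vertical, yields $E=pr_{\mathcal{V}}D|_{\mathcal{V}\times\mathcal{V}}$.

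It remains to fix the two mixed blocks, and here the argument must invoke the vanishing mixed torsion built into the notion of Bott connection (Definition \ref{defBott}): writing $T_{\nabla}(\mathfrak{X},\mathfrak{Y})=\nabla_{\mathfrak{X}}\mathfrak{Y}-\nabla_{\mathfrak{Y}}\mathfrak{X}-[\mathfrak{X},\mathfrak{Y}]=0$ and separating its vertical and horizontal parts (using $\nabla_{\mathfrak{X}}\mathfrak{Y}\in\mathcal{V}$, $\nabla_{\mathfrak{Y}}\mathfrak{X}\in\mathcal{H}$ from i)) yields at once $\nabla_{\mathfrak{X}}\mathfrak{Y}=pr_{\mathcal{V}}[\mathfrak{X},\mathfrak{Y}]$ and $\nabla_{\mathfrak{Y}}\mathfrak{X}=pr_{\mathcal{H}}[\mathfrak{Y},\mathfrak{X}]$, which are exactly (\ref{BottV}); so all four blocks coincide with those of $\nabla^D$. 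I expect the main obstacle to be the Koszul step for the pure blocks: one must verify that conditions ii) and iii), which only give metric and torsion information \emph{internal} to each subbundle, still suffice to run the full cyclic identity, the key being that $\mathcal{H}\perp\mathcal{V}$ discards exactly those terms that would otherwise demand cross-information between $\mathcal{H}$ and $\mathcal{V}$. A secondary point to state carefully is that the mixed-torsion vanishing, although entering the proposition only through the Vr\u anceanu-Bott (Bott) class, is genuinely needed: without it the blocks $\nabla_{\mathfrak{X}}\mathfrak{Y}$ and $\nabla_{\mathfrak{Y}}\mathfrak{X}$ are left undetermined by i)--iii) alone, so the uniqueness must be understood within the class of connections with no mixed torsion.
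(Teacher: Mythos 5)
Your argument is correct and takes essentially the same route as the paper: existence by direct verification, and uniqueness via the Koszul identity run separately inside $\mathcal{H}$ and inside $\mathcal{V}$ --- exactly the ``trick that gives the global expression of the Levi-Civita connection'' that the paper invokes, yielding $\mathfrak{g}(\nabla_{\mathfrak{X}}\mathfrak{X}',\mathfrak{X}'')=\mathfrak{g}(pr_{\mathcal{H}}D_{\mathfrak{X}}\mathfrak{X}',\mathfrak{X}'')$ and its vertical analogue. Your closing caveat is also accurate: the paper's proof, like yours, only pins down the two pure blocks from i)--iii) and tacitly takes the mixed blocks from the Bott conditions (\ref{BottV}), so the uniqueness is to be read within the class of connections with no mixed torsion.
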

\begin{proof} The meaning of the properties i), ii), iii) is
$$\nabla^D_{\mathfrak{Z}}\mathfrak{X}\in\mathfrak{H},\;
\nabla^D_{\mathfrak{Z}}\mathfrak{Y}\in\mathfrak{V},\;{\rm for}\;\mathfrak{Z}\in\chi(\mathfrak{T}M),\, \mathfrak{X}\in\mathcal{H},\,
\mathfrak{Y}\in\mathcal{V},$$
$$\nabla^D_{\mathfrak{X}}\mathfrak{g}(\mathfrak{Y},\mathfrak{Z})=0,
\;{\rm if}\;\mathfrak{X},\mathfrak{Y},\mathfrak{Z}\in\mathcal{H}\;{\rm or}\;\mathfrak{X},\mathfrak{Y},\mathfrak{Z}\in\mathcal{V},$$
$$pr_{\mathcal{H}}T_{\nabla^D}(\mathfrak{X},\mathfrak{Y})=0\;{\rm if}\;\mathfrak{X},\mathfrak{Y}\in\mathcal{H},\;\;
pr_{\mathcal{V}}T_{\nabla^D}(\mathfrak{X},\mathfrak{Y})=0\;{\rm if}\;\mathfrak{X},\mathfrak{Y}\in\mathcal{V}.$$	 All these equalities follow from the definition of $\nabla^D$ and of the Levi-Civita connection $D$ .
Conversely, if the previous equalities hold, the trick that gives the global expression of the Levi-Civita connection (e.g., \cite{KN}, vol. I) yields $$\mathfrak{g}(\nabla^D_{\mathfrak{X}}\mathfrak{Y},\mathfrak{Z})=
\mathfrak{g}(pr_{\mathcal{H}}D_{\mathfrak{X}}\mathfrak{Y},\mathfrak{Z}),
\,\mathfrak{g}(\nabla^D_{\mathfrak{X}}\mathfrak{Y},\mathfrak{Z})=
\mathfrak{g}(pr_{\mathcal{V}}D_{\mathfrak{X}}\mathfrak{Y},\mathfrak{Z}),$$
where all the arguments belong to $\mathcal{H}$ in the first case and to $\mathcal{V}$ in the second case, whence, the required result.\end{proof}

Notice that the restriction of $\nabla^D$ to the leaves of $\mathcal{V}$ is the Levi-Civita connection of the leaves. The connection $\nabla^D$ above will be called the {\it canonical connection} of $\mathfrak{g}$ on $\mathfrak{T}M$.

The canonical connection does not preserve the subbundles $\mathcal{V}_1,\mathcal{V}_2$. Instead, the latter are preserved by the corresponding connection (\ref{DBott}), which may also be written as
\begin{equation}\label{nablabarD}
\bar{\nabla}^D_{\mathfrak{Z}}\mathfrak{U}= \nabla^D_{\mathfrak{Z}}\mathfrak{U},\,
\bar{\nabla}^D_{\mathfrak{X}_a}\mathfrak{Y}_a=pr_{\mathcal{V}_a} \nabla^D_{\mathfrak{X}_a}\mathfrak{Y}_a,\, \bar{\nabla}^D_{\mathfrak{X}_a}\mathfrak{Y}_{a'}=pr_{\mathcal{V}_{a'}}
[\mathfrak{X}_a,\mathfrak{Y}_{a'}],\end{equation}
where at least one of the vector fields $\mathfrak{Z},\mathfrak{U}$ is horizontal and $a,a'\in\{1,2\}$, $a'\equiv a+1 ({\rm mod.}\,2)$.

The torsion and curvature of the canonical connection $\nabla^D$ satisfy the formulas (\ref{torsBott}), (\ref{curb0}) and (\ref{curb10}) and these imply corresponding properties for the covariant curvature tensor
$$R_{\nabla^D}(\mathfrak{Z}_1, \mathfrak{Z}_2,\mathfrak{Z}_3,\mathfrak{Z}_4) =\mathfrak{g}(R_{\nabla^D}(\mathfrak{Z}_3,\mathfrak{Z}_4)\mathfrak{Z}_2, \mathfrak{Z}_1),$$ particularly,
$$\begin{array}{c}
R_{\nabla^D}(\mathfrak{Z}_1,\mathfrak{Z}_2,
\mathfrak{Z}_3,\mathfrak{Z}_4) =-R_{\nabla^D}(\mathfrak{Z}_1,\mathfrak{Z}_2, \mathfrak{Z}_4,\mathfrak{Z}_3),\vspace*{2mm}\\
\sum_{Cycl(1,2,3)} R_{\nabla^D}(\mathfrak{Z},\mathfrak{X}_1, \mathfrak{X}_2,\mathfrak{X}_3)=0,\end{array}$$
where $\mathfrak{X}$ are horizontal vectors, $\mathfrak{Y}$ are vertical vectors and  $\mathfrak{Z}$ are arbitrary vectors.)

For other identities one needs a horizontal tensor $C$ called the {\it Cartan tensor} (a name suggested by Finsler geometry). This tensor is defined by \cite{VL}
$$ C(\mathfrak{X}_1,\mathfrak{X}_2,\mathfrak{X}_3) =(L_{S\mathfrak{X}_1}g)(\mathfrak{X}_2,\mathfrak{X}_3)
=(\nabla^D_{S\mathfrak{X}_1}g)(\mathfrak{X}_2,\mathfrak{X}_3),$$
where $g=\mathfrak{g}|_{\mathcal{H}}$ is a horizontal metric extended by $0$ on vertical arguments. The local components of $C$ are
$$ C_{ijk}=C(X_i,X_j,X_k)=\frac{\partial g_{jk}}{\partial y^i},$$ whence, we deduce that
$C=0$  iff $g$ is a projectable metric and $C$ is totally symmetric iff, locally (but, possibly, not globally), $g$ is the Hessian of a function ``in the direction of	 $\mathcal{V}_1$".

The following curvature identities may be proven like in \cite{VL}:
$$\begin{array}{l}
R_{\nabla^D}(\mathfrak{X}_1,\mathfrak{X}_2,\mathfrak{X}_3,\mathfrak{X}_4) +R_{\nabla^D}(\mathfrak{X}_2,\mathfrak{X}_1,\mathfrak{X}_3,\mathfrak{X}_4) \vspace*{2mm}\\ =-C(\mathcal{S}^{-1}T_{\nabla^D}
(\mathfrak{X}_3,\mathfrak{X}_4),\mathfrak{X}_1,\mathfrak{X}_2),
\vspace*{2mm}\\
R_{\nabla^D}(\mathfrak{X}_1,\mathfrak{X}_2,\mathfrak{X}_3,\mathfrak{X}_4) -R_{\nabla^D}(\mathfrak{X}_3,\mathfrak{X}_4,\mathfrak{X}_1,\mathfrak{X}_2) \vspace*{2mm}\\ =\frac{1}{2}[C(\mathcal{S}^{-1}T_{\nabla^D}
(\mathfrak{X}_1,\mathfrak{X}_2),\mathfrak{X}_3,\mathfrak{X}_4)
-C(\mathcal{S}^{-1}T_{\nabla^D}
(\mathfrak{X}_3,\mathfrak{X}_4),\mathfrak{X}_1,\mathfrak{X}_2)].
\end{array}$$
\begin{corol}\label{corolcurb} If either the horizontal component of the metric $\mathfrak{g}$ is projectable or the horizontal bundle is integrable, the restriction of the curvature of $\nabla^D$ to horizontal arguments satisfies the Riemannian curvature identities.\end{corol}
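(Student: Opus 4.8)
The plan is to obtain the result directly from the two curvature identities displayed immediately before this corollary, combined with the general symmetries of $R_{\nabla^D}$ already recorded in this section. Recall that, on arbitrary arguments, the covariant curvature of the canonical connection is antisymmetric in its last two slots, and that on horizontal arguments it satisfies the first Bianchi identity $\sum_{Cycl}R_{\nabla^D}(\mathfrak{Z},\mathfrak{X}_1,\mathfrak{X}_2,\mathfrak{X}_3)=0$. Thus, among the four Riemannian curvature identities, the antisymmetry in the second pair and the first Bianchi identity hold unconditionally; it remains only to establish the antisymmetry in the first pair and the pair-exchange symmetry. For these the two displayed identities reduce the whole matter to showing that their right-hand sides vanish.

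First I would observe that both right-hand sides are built from the single expression $C(\mathcal{S}^{-1}T_{\nabla^D}(\cdot,\cdot),\cdot,\cdot)$, so it suffices to check that this expression is zero under either hypothesis. In the first case, where the horizontal component $g=\mathfrak{g}|_{\mathcal{H}}$ is projectable, the characterization of the Cartan tensor recorded above gives $C=0$ identically, since its components $C_{ijk}=\partial g_{jk}/\partial y^i$ vanish for a projectable metric; hence both right-hand sides are immediately zero.

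In the second case, where $\mathcal{H}$ is integrable, I would invoke formula (\ref{torsBott}): on horizontal arguments the torsion of $\nabla^D$ is $T_{\nabla^D}(\mathfrak{X}_3,\mathfrak{X}_4)=-R_{\mathcal{H}}(\mathfrak{X}_3,\mathfrak{X}_4)=-pr_{\mathcal{V}}[\mathfrak{X}_3,\mathfrak{X}_4]$. Integrability of $\mathcal{H}$ means precisely that the bracket of two horizontal fields remains horizontal, so this vertical projection vanishes and hence $\mathcal{S}^{-1}T_{\nabla^D}(\mathfrak{X}_3,\mathfrak{X}_4)=0$; again both right-hand sides are zero. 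In either case the first displayed identity yields the antisymmetry $R_{\nabla^D}(\mathfrak{X}_1,\mathfrak{X}_2,\mathfrak{X}_3,\mathfrak{X}_4)=-R_{\nabla^D}(\mathfrak{X}_2,\mathfrak{X}_1,\mathfrak{X}_3,\mathfrak{X}_4)$ and the second yields the pair symmetry $R_{\nabla^D}(\mathfrak{X}_1,\mathfrak{X}_2,\mathfrak{X}_3,\mathfrak{X}_4)=R_{\nabla^D}(\mathfrak{X}_3,\mathfrak{X}_4,\mathfrak{X}_1,\mathfrak{X}_2)$, so together with the two unconditional identities all four Riemannian symmetries hold.

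I expect no serious obstacle here: the corollary is an immediate consequence of the two previously stated curvature identities, and the only real content is the vanishing of their right-hand sides. The one point deserving care is confirming that the hypotheses enter \emph{only} through $C=0$ or through $\mathcal{S}^{-1}T_{\nabla^D}|_{\mathcal{H}\times\mathcal{H}}=0$, and that the multilinearity of $C$ and $T_{\nabla^D}$ lets one pass from vanishing on the natural horizontal basis $(X_i)$ to arbitrary horizontal arguments.
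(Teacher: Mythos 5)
Your proposal is correct and follows essentially the same route as the paper: the paper's proof simply notes that the projectability hypothesis gives $C=0$ and the integrability hypothesis gives $T_{\nabla^D}=0$, so the right-hand sides of the two displayed curvature identities vanish. Your additional bookkeeping (identifying which of the four Riemannian identities are unconditional and which require the hypothesis) is a faithful elaboration of the same argument.
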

\begin{proof} The first case is characterized by $C=0$ and the second case by $T_{\nabla^D}=0$.
\end{proof}
\begin{defin}\label{vm} {\rm
A non degenerate metric $\mathfrak{g}_{\mathcal{V}}$ on the vertical bundle $\mathcal{V}$ is called a {\it vertical metric} on $\mathfrak{T}M$. A vertical metric that has a non degenerate restriction to $\mathcal{V}_2$ is called {\it strongly non degenerate}\footnote{The choice of $\mathcal{V}_2$ rather than $\mathcal{V}_1$ in this definition is geometrically more convenient, but it is not essential. In any case, there is no reason to ask for both restrictions to be non degenerate.}.}\end{defin}

If non degenerate (which was assumed), the restriction to $\mathcal{V}$ of a pseudo-Riemannian metric of $\mathfrak{T}M$ is a vertical metric. Conversely, any choice of a horizontal bundle $\mathcal{H}$ with a metric $\mathfrak{g}_{\mathcal{H}}$ allows us to extend the vertical metric $\mathfrak{g}_{\mathcal{V}}$ to the metric $\mathfrak{g}= \mathfrak{g}_{\mathcal{H}}+\mathfrak{g}_{\mathcal{V}}$.

Hereafter, all the vertical metrics are assumed strongly non degenerate.

In this section we will discuss vertical metrics that are compatible with the para-Hermitian metric of $\mathcal{V}$, where the compatibility condition is like in \cite{VT} and comes from generalized Riemannian metrics \cite{Galt}. The results too are based on these sources.

It is convenient to use $\mathcal{V}\approx\mathcal{V}_1\oplus\mathcal{V}_1^*$ and, correspondingly, write vertical vectors as pairs $(\mathfrak{Y},\mathfrak{a})$, where $\mathfrak{Y}$ is the $\mathcal{V}_1$-component and $\mathfrak{a}\in\mathcal{V}_1^*$ is the image of the $\mathcal{V}_2$-component.
Then, the restrictions $h=\mathfrak{g}_{\mathcal{V}}|_{\mathcal{V}_1\times\mathcal{V}_1},
k=\mathfrak{g}_{\mathcal{V}}|_{\mathcal{V}_2\times\mathcal{V}_2},
l=\mathfrak{g}_{\mathcal{V}}|_{\mathcal{V}_1\times\mathcal{V}_2}$ appear as tensors of type $(0,2),(1,1),(2,0)$ of the bundle $\mathcal{V}_1$.
\begin{rem}\label{obsLville} {\rm The strong non degeneracy condition means that $k$ is non degenerate and there exists a Legendre-type involution on $\mathfrak{T}M$ given by
$$(x,y,z)\mapsto(x,\sharp_kz,\flat_ky).$$}\end{rem}
\begin{example}\label{exHess} {\rm Let $\mathcal{K}$ be a function in $C^\infty(\mathfrak{T}M)$. $Hess\,\mathcal{K}(\mathfrak{Z},\mathfrak{Z}')= \mathfrak{Z}'\mathfrak{Z}\mathcal{K}$, $\mathfrak{Z},\mathfrak{Z}'\in\mathcal{V}$ restricted to
$\mathcal{V}_1\times\mathcal{V}_1,\mathcal{V}_2\times\mathcal{V}_2, \mathcal{V}_1\times\mathcal{V}_2$ yields tensor fields $h,l,k$. Thus, if the Hessian is non degenerate, it defines a vertical metric, which is strongly non degenerate if $k$ is non degenerate. Moreover, if we chose a horizontal bundle $\mathcal{H}$ and transfer $k$ to $\mathcal{H}$ by $(S|_{\mathcal{H}})^{-1}$ we obtain a metric of $\mathfrak{T}M$ associated to the function $\mathcal{K}$.}\end{example}

Consider the endomorphism $\phi\in End(\mathcal{V})$ defined by \begin{equation}\label{phi0}
2g(\phi\mathfrak{Z},\mathfrak{Z}')= \mathfrak{g}_{\mathcal{V}}(\mathfrak{Z},\mathfrak{Z}'),\;
\mathfrak{Z}\leftrightarrow(\mathfrak{Y},\mathfrak{a}) ,\mathfrak{Z}'
\leftrightarrow(\mathfrak{Y}',\mathfrak{a}')\in\mathcal{V},\end{equation}
where $g$ is the para-Hermitian metric of $\mathcal{V}$. The symmetry of $\mathfrak{g}_{\mathcal{V}}$ implies the property
\begin{equation}\label{symphi} g(\phi\mathfrak{Z},\mathfrak{Z}')=
g(\mathfrak{Z},\phi\mathfrak{Z}').
\end{equation}

From  (\ref{phi0}) we get the following matrix representation of $\phi$
\begin{equation}\label{phi2}
\phi\left(\begin{array}{c}\mathfrak{Y}\vspace{2mm}\\ \mathfrak{a} \end{array}
\right)=\left(\begin{array}{cc} l&\sharp_k\vspace{2mm}\\
\flat_h&^t\hspace{-1pt}l\end{array}\right) \left(
\begin{array}{c}\mathfrak{Y}\vspace{2mm}\\ \mathfrak{a} \end{array}\right).
\end{equation}
\begin{defin}\label{defcompg} {\rm\cite{Galt}
The vertical metric $\mathfrak{g}_{\mathcal{V}}$ is {\it $g$-compatible} if
$\phi^2=Id.$}\end{defin}

If $\phi^2=Id$ and, with the same notation, (\ref{phi0}) and (\ref{symphi}) imply
\begin{equation}\label{phi01} 2g(\mathfrak{Z},\mathfrak{Z}')= \mathfrak{g}_{\mathcal{V}}(\phi\mathfrak{Z},\mathfrak{Z}'),
\;\mathfrak{g}_{\mathcal{V}}(\phi\mathfrak{Z},\mathfrak{Z}')=
\mathfrak{g}_{\mathcal{V}}(\mathfrak{Z},\phi\mathfrak{Z}').
\end{equation}

In terms of $h,l,k$ the condition $\phi^2=Id$ means \begin{equation}\label{condcompat}
l^2+\sharp_k\circ\flat_h=Id,\, l\circ\sharp_k+\sharp_k\circ\hspace{1pt}^t\hspace{-1pt}l=0,\,
\flat_h\circ l+\hspace{1pt}^t\hspace{-1pt}l\circ\flat_h=0.
\end{equation}
\begin{prop}\label{Gcupsi} The strongly non degenerate,vertical $g$-compatible metrics $\mathfrak{g}_{\mathcal{V}}$ are in a bijective correspondence with pairs $(\sigma,\psi)$ where $\sigma$ is a non degenerate metric on $\mathcal{V}_1$ and $\psi\in\Gamma\wedge^2\mathcal{V}_1^*$. \end{prop}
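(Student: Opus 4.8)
The plan is to establish the bijection by showing how to pass between a $g$-compatible vertical metric $\mathfrak{g}_{\mathcal{V}}$ and a pair $(\sigma,\psi)$ in both directions, and then to verify that the two constructions are mutually inverse. The key observation is that the $g$-compatibility condition $\phi^2=Id$ forces $\phi$ to be a para-complex-type structure on $\mathcal{V}$, and the matrix form (\ref{phi2}) together with the three algebraic relations (\ref{condcompat}) are exactly the constraints that let us reduce the three tensors $h,l,k$ on $\mathcal{V}_1$ to just two independent data, namely a symmetric part and a skew part.

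\medskip

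First I would extract $(\sigma,\psi)$ from a given $\mathfrak{g}_{\mathcal{V}}$. Since strong non degeneracy gives that $k$ (hence $\sharp_k$) is invertible, the second relation in (\ref{condcompat}) reads $l\circ\sharp_k=-\sharp_k\circ{}^t l$, i.e. $\sharp_k^{-1}\circ l\circ\sharp_k=-{}^t l$; equivalently the $(2,0)$-tensor on $\mathcal{V}_1^*$ obtained from $l$ and $k$ has a definite symmetry. The natural move is to set $\sigma:=\flat_k^{-1}=\sharp_k$ viewed as a (non degenerate, symmetric) metric on $\mathcal{V}_1$ — more precisely to take $\sigma$ to be the metric induced by $k$ — and to set $\psi$ to be the skew $\mathcal{V}_1$-$2$-form built from the ``off-diagonal'' data $l$ relative to $\sigma$. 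Concretely I expect $\psi(\cdot,\cdot)=\sigma(l\,\cdot,\cdot)$ up to sign, and the third relation in (\ref{condcompat}) should guarantee that $\psi$ is skew, so $\psi\in\Gamma\wedge^2\mathcal{V}_1^*$. Then the first relation $l^2+\sharp_k\circ\flat_h=Id$ solves for $h$ in terms of $\sigma$ and $\psi$, showing that $h$ is not independent data.

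\medskip

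Conversely, given $(\sigma,\psi)$ with $\sigma$ non degenerate and $\psi$ skew, I would reconstruct $\mathfrak{g}_{\mathcal{V}}$ by setting $k$ from $\sigma$, defining $l$ from $\psi$ via $\sigma$, and defining $h$ through the first equation of (\ref{condcompat}). One then checks that the three tensors so produced satisfy all of (\ref{condcompat}), so that the associated $\phi$ of (\ref{phi2}) satisfies $\phi^2=Id$, and that via (\ref{phi0}) the resulting $\mathfrak{g}_{\mathcal{V}}$ is symmetric (using (\ref{symphi})), non degenerate, and strongly non degenerate (because $k$ is non degenerate). The symmetry requirement on $\mathfrak{g}_{\mathcal{V}}$ is what pins down that the \emph{symmetric} piece comes from $\sigma$ and the \emph{skew} piece from $\psi$; this is where the para-Hermitian normalization of $g$ on $\mathcal{V}$ (Proposition \ref{paraHpeV}) enters, since $g$ pairs $\mathcal{V}_1$ with $\mathcal{V}_2\approx\mathcal{V}_1^*$ and thereby identifies the blocks correctly.

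\medskip

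The main obstacle I anticipate is bookkeeping rather than conceptual: one must be careful that the symmetry of $\mathfrak{g}_{\mathcal{V}}$ as a bilinear form on $\mathcal{V}$ translates into the correct symmetry/skew-symmetry statements for $h,l,k$ as tensors of type $(0,2),(1,1),(2,0)$ on $\mathcal{V}_1$, keeping track of the musical isomorphisms $\flat_h,\sharp_k$ and the transpose ${}^t l$ under the identification $\mathcal{V}_2\approx\mathcal{V}_1^*$. Verifying that the map $(\sigma,\psi)\mapsto\mathfrak{g}_{\mathcal{V}}$ and its inverse genuinely compose to the identity — i.e. that solving for $h$ from the first relation in (\ref{condcompat}) and then re-extracting $\sigma,\psi$ returns the original pair — is the step that requires the closest attention, but it reduces to straightforward linear algebra once the symmetry dictionary is fixed.
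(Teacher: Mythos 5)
Your proposal follows the paper's proof essentially verbatim: set $\sigma=k^{-1}$ and $\flat_\psi=-\flat_\sigma\circ l$ (skewness coming from (\ref{condcompat})), then recover $h$ from the first relation $l^2+\sharp_k\circ\flat_h=Id$, which is exactly the reconstruction formula (\ref{eqGcupsi}). The only slip is attributing the skew symmetry of $\psi$ to the \emph{third} relation of (\ref{condcompat}); as your own computation $\flat_\sigma\circ l=-\,^t\hspace{-1pt}l\circ\flat_\sigma$ shows, it is the \emph{second} relation that does this.
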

\begin{proof} We proceed like in the case of generalized Riemannian metrics \cite{Galt}. By (\ref{phi0}), instead of looking at $\mathfrak{g}_{\mathcal{V}}$, we may look at the corresponding $\phi\in End(\mathcal{V})$, then, define the corresponding pair by\footnote{In fact, $\sigma$ is the covariant version of $k$ but, we prefer to use the new symbol to avoid any possible confusion.}
$$ \sigma=k^{-1},\;\flat_\psi=-\flat_\sigma\circ l$$
(the skew symmetry of $\psi$ follows from (\ref{condcompat})).
Conversely, given a pair $(\sigma,\psi)$, if we take
\begin{equation}\label{eqGcupsi} k=\sigma^{-1},\,l=-\sharp_\sigma\flat_\psi,\, \flat_h=\flat_\sigma\circ
(Id-(\sharp_\sigma\flat_\psi)^2),\end{equation}
we get the endomorphism $\phi$ and the required metric $\mathfrak{g}_{\mathcal{V}}$.
\end{proof}
\begin{rem}\label{obssigmapsi} {\rm	 A vertical metric $\mathfrak{g}_{\mathcal{V}}$ may be interpreted as a metric on the vector bundle $p^{-1}(\mathbf{T}M)\approx\mathcal{V}$ and
the tensors $\sigma,\psi$ may be interpreted as tensors of the vector bundle $p^{-1}(TM)\approx\mathcal{V}_1$.}\end{rem}
\begin{prop}\label{strUpm} Let $\mathcal{U}_\pm\subset\mathcal{V}$ be the $(\pm1)$-eigenbundles of the product structure $\phi$ that defines a compatible metric $\mathfrak{g}_{\mathcal{V}}$. Then, $\mathcal{U}_+\perp_{\mathfrak{g}_{\mathcal{V}}}\mathcal{U}_-$, the
projections $pr_{\mathcal{V}_1}|_{\mathcal{U}_\pm}: {\mathcal{U}_\pm}\rightarrow\mathcal{V}_1$ defined by the decomposition $\mathcal{V}=\mathcal{V}_1\oplus\mathcal{V}_2$ are isomorphisms, and the corresponding pullbacks of the metric $\sigma$ are equal to $(1/2)\mathfrak{g}_{\mathcal{V}}|_{\mathcal{U}_\pm}$.\end{prop}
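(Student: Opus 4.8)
The plan is to treat $\phi$ as a $\mathfrak{g}_{\mathcal{V}}$-self-adjoint involution and to extract all three assertions from the block form (\ref{phi2}) together with the dictionary $\sigma=k^{-1}$, $\flat_\psi=-\flat_\sigma\circ l$ of Proposition \ref{Gcupsi}. Throughout I write a vertical vector as a pair $(\mathfrak{Y},\mathfrak{a})$ with $\mathfrak{Y}\in\mathcal{V}_1$, $\mathfrak{a}\in\mathcal{V}_1^*$, so that $pr_{\mathcal{V}_1}(\mathfrak{Y},\mathfrak{a})=\mathfrak{Y}$ and, by the coordinate expression of $g$ in Proposition \ref{paraHpeV}, $2g((\mathfrak{Y},\mathfrak{a}),(\mathfrak{Y}',\mathfrak{a}'))=\mathfrak{a}(\mathfrak{Y}')+\mathfrak{a}'(\mathfrak{Y})$. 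For the orthogonality $\mathcal{U}_+\perp_{\mathfrak{g}_{\mathcal{V}}}\mathcal{U}_-$ I would run the standard eigenspace argument: for $\mathfrak{Z}\in\mathcal{U}_+$ and $\mathfrak{Z}'\in\mathcal{U}_-$, the self-adjointness relation (\ref{phi01}) gives $\mathfrak{g}_{\mathcal{V}}(\mathfrak{Z},\mathfrak{Z}')=\mathfrak{g}_{\mathcal{V}}(\phi\mathfrak{Z},\mathfrak{Z}')=\mathfrak{g}_{\mathcal{V}}(\mathfrak{Z},\phi\mathfrak{Z}')=-\mathfrak{g}_{\mathcal{V}}(\mathfrak{Z},\mathfrak{Z}')$, forcing the pairing to vanish. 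This step is routine.

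For the isomorphism claim I note that, since $\phi^2=Id$, one has $\mathcal{V}=\mathcal{U}_+\oplus\mathcal{U}_-$ with $\dim\mathcal{U}_++\dim\mathcal{U}_-=2m$. I would prove injectivity of $pr_{\mathcal{V}_1}|_{\mathcal{U}_\pm}$: if $(\mathfrak{Y},\mathfrak{a})\in\mathcal{U}_\pm$ has $\mathfrak{Y}=0$, then the top row of (\ref{phi2}) reads $\sharp_k\mathfrak{a}=\pm\mathfrak{Y}=0$, and the strong non degeneracy of $k$ (Remark \ref{obsLville}) gives $\mathfrak{a}=0$. Hence $\dim\mathcal{U}_\pm\le m$, and the dimension count forces $\dim\mathcal{U}_\pm=m$, so both injections are isomorphisms onto $\mathcal{V}_1$.

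The substantive part is the metric identity, and this is where the one genuine computation sits. For $(\mathfrak{Y},\mathfrak{a})\in\mathcal{U}_\pm$ the top row of (\ref{phi2}) is $l\mathfrak{Y}+\sharp_k\mathfrak{a}=\pm\mathfrak{Y}$, which I would solve, using $\sharp_k=\sharp_\sigma$ and $\sigma=k^{-1}$, as $\mathfrak{a}=\flat_\sigma(\pm\mathfrak{Y}-l\mathfrak{Y})$. Substituting into $2g(\mathfrak{Z},\mathfrak{Z}')=\mathfrak{a}(\mathfrak{Y}')+\mathfrak{a}'(\mathfrak{Y})$ produces $\pm 2\sigma(\mathfrak{Y},\mathfrak{Y}')-[\sigma(l\mathfrak{Y},\mathfrak{Y}')+\sigma(l\mathfrak{Y}',\mathfrak{Y})]$, and the bracketed cross term equals $-\psi(\mathfrak{Y},\mathfrak{Y}')-\psi(\mathfrak{Y}',\mathfrak{Y})=0$ since $\flat_\sigma\circ l=-\flat_\psi$ with $\psi$ skew (Proposition \ref{Gcupsi}, eq. (\ref{eqGcupsi})). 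Combining with $\mathfrak{g}_{\mathcal{V}}(\mathfrak{Z},\mathfrak{Z}')=2g(\phi\mathfrak{Z},\mathfrak{Z}')=\pm 2g(\mathfrak{Z},\mathfrak{Z}')$ from (\ref{phi0}), the two eigenvalue signs multiply to $+1$ and I obtain $\mathfrak{g}_{\mathcal{V}}(\mathfrak{Z},\mathfrak{Z}')=2\sigma(\mathfrak{Y},\mathfrak{Y}')=2\,(pr_{\mathcal{V}_1}^{*}\sigma)(\mathfrak{Z},\mathfrak{Z}')$, which is the assertion. I expect the main obstacle to be precisely this cancellation: one must recognize that the off-diagonal block $l$ contributes only through its $\sigma$-skew part $\psi$, so that the cross term drops out and the final identity becomes sign-independent across $\mathcal{U}_+$ and $\mathcal{U}_-$; everything else is bookkeeping with the block form.
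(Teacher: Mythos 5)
Your proof is correct and follows essentially the same route as the paper's: orthogonality from the self-adjointness relation (\ref{phi01}), the isomorphism claim from $\mathcal{U}_\pm\cap\mathcal{V}_2=0$ plus a rank count, and the metric identity from the explicit inverse $\iota_\pm\mathfrak{Y}=(\mathfrak{Y},(\flat_\psi\pm\flat_\sigma)\mathfrak{Y})$ of the projection, with the $\psi$-terms cancelling by skew symmetry. The only cosmetic differences are that the paper derives $\mathcal{U}_\pm\cap\mathcal{V}_2=0$ from the $g$-isotropy of $\mathcal{V}_2$ and the strong non degeneracy of $\mathfrak{g}_{\mathcal{V}}$ rather than from the top row of (\ref{phi2}) and the non degeneracy of $k$ (which are equivalent), and that it quotes the formula for $\iota_\pm$ from the cited sources where you compute it directly.
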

\begin{proof} We follow \cite{VT}. The first conclusion is a consequence of (\ref{phi01}).
For the second, we first show that $\mathcal{U}_\pm\cap\mathcal{V}_2=0$. Indeed, if $\mathfrak{Y}\in\mathcal{U}_\pm\cap\mathcal{V}_2$, then,
$$\mathfrak{g}_{\mathcal{V}}(\mathfrak{Y},\mathfrak{Z})=
2g(\phi\mathfrak{Y},\mathfrak{Z})=
\pm 2g(\mathfrak{Y},\mathfrak{Z})=0, \;\forall \mathfrak{Z}\in\mathcal{V}_2,$$
because $\mathcal{V}_2$ is $g$-isotropic. Since $\mathfrak{g}_{\mathcal{V}}$ is strongly non degenerate, we deduce $\mathfrak{Y}=0$.
Together with the orthogonality between $\mathcal{U}_\pm$, this result implies $rank\,\mathcal{U}_\pm= m$, therefore,
$ \mathcal{V}_2\oplus \mathcal{U_\pm}=\mathcal{V}$ and
$pr_{\mathcal{V}_1}|_{\mathcal{U}_\pm}: {\mathcal{U}_\pm}\rightarrow\mathcal{V}_1$ are isomorphisms with inverses, say, $\iota_\pm$. The expression of $\iota_\pm$ was established in \cite{{Galt},{VT}} and is given by
$$ \iota_\pm\mathfrak{Y}=(\mathfrak{Y},\flat_k(l\mp Id)\mathfrak{Y}) =(\mathfrak{Y},(\flat_\psi\pm\flat_\sigma)\mathfrak{Y})$$
($\mathfrak{Y}\in\mathcal{V}_1$ and
the pairs correspond to vertical vectors). The last assertion of the proposition is a consequence of this expression.
\end{proof}
\begin{rem}\label{obsauxgUpm} {\rm Similarly, one gets
$$\mathcal{U}_+\perp_g\mathcal{U}_-,\;g|_{\mathcal{U}_\pm} =\pm(pr_{\mathcal{V}_1})^*\sigma.$$ }\end{rem}

On para-Hermitian manifolds, compatible, generalized, pseudo-Riemannian metrics may be seen as double fields, because of their similarity with the double fields of string theory (see \cite{VT} for details and references to string theory literature). This suggests the following definition, which, perhaps, will interest physicists.
\begin{defin}\label{defdfield} {\rm A {\it double field over a manifold} $M$ is a pair $(\mathcal{H},\mathfrak{g}_{\mathcal{V}})$ where $\mathcal{H}$ is a horizontal bundle on $\mathfrak{T}M$ and $\mathfrak{g}_{\mathcal{V}}$ is a compatible metric on $\mathcal{V}$.}\end{defin}

By Proposition \ref{Gcupsi} double fields over $M$ are in a bijective correspondence with triples $(\mathcal{H}, \sigma,\psi)$ where $\sigma$ is a non degenerate metric on $p^{-1}(TM)\approx\mathcal{V}_1$ and $\psi$ is a $2$-form on the same bundle. $\sigma$ and $\psi$ are called the {\it components of the field}.
\begin{example}\label{exdfield1} {\rm Let $(M,\gamma)$ be a pseudo-Riemannian manifold. The Levi-Civita connection of $\gamma$ yields a horizontal bundle $\mathcal{H}_\gamma$ defined by (\ref{Gammahoriz}) and $(\mathcal{H}_\gamma,\mathfrak{g}_{\mathcal{V}})$, where $\mathfrak{g}_{\mathcal{V}}$ is a vertical, compatible metric is a double field over $M$. In particular, there exists a double field with the component $\sigma$ equal to the transfer of $\gamma$ to its isomorphic bundle $p^{-1}(TM)$ and the form component $\psi=0$. The corresponding vertical metric is the vertical component of the Sasaki metric associated to $\gamma$ and its Levi-Civita connection.}\end{example}
\begin{example}\label{exdfield2} {\rm  Let $\mathcal{L}$ be a regular Lagrangian on $TM$ and let $\mathcal{H}_{\mathcal{L}},\sigma_{\mathcal{L}}$ be the horizontal bundle and the horizontal component of the metric $\mathfrak{g}_{\mathcal{L}}$ defined in Example \ref{exSasakitype} (transferred to $p^{-1}(TM)$). Furthermore, consider the $1$-form $\vartheta=d\mathcal{L}\circ S$ and the corresponding $2$-form $d'\vartheta$, where $d'$ is defined by the decomposition (\ref{descd}) associated to the horizontal bundle $\mathcal{H}_{\mathcal{L}}$. $d'\vartheta$ transfers to a $2$-form $\psi_{\mathcal{L}}$ on the bundle $p^{-1}(TM)$ and the triple $(\mathcal{H}_{\mathcal{L}},\sigma_{\mathcal{L}},\psi_{\mathcal{L}})$ defines a double field over $M$ that is canonically associated to the Lagrangian $\mathcal{L}$.}\end{example}

The use of double fields in physics requires an {\it action functional} deduced from physics requirements, particularly $T$-duality, which we do not discuss here. Instead, we will define an action that extends the one constructed for para-Hermitian manifolds in \cite{VT}, which was motivated by the geometry of the double fields theory of physics.
\begin{defin}\label{doublec} {\rm A {\it vertical, double metric connection} is a connection on the vertical bundle $\mathcal{V}$ that preserves the para-Hermitian metric $g$ and the double field metric $\mathfrak{g}_{\mathcal{V}}$.}\end{defin}
\begin{prop}\label{propdmc} A connection $\nabla^{\mathcal{V}}$ on $\mathcal{V}$ is a vertical double metric connection iff	 it preserves the subbundles $\mathcal{U}_\pm$ and their metrics $(pr_{\mathcal{V}_1})^*\sigma$.\end{prop}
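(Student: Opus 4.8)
The statement to prove is Proposition \ref{propdmc}: a connection $\nabla^{\mathcal{V}}$ on $\mathcal{V}$ is a vertical double metric connection (i.e., it preserves both the para-Hermitian metric $g$ and the double field metric $\mathfrak{g}_{\mathcal{V}}$) if and only if it preserves the eigenbundles $\mathcal{U}_\pm$ together with their metrics $(pr_{\mathcal{V}_1})^*\sigma$. The natural strategy is to exploit the fact, recorded in Remark \ref{obsauxgUpm} and Proposition \ref{strUpm}, that the data $(g,\mathfrak{g}_{\mathcal{V}})$ and the data $(\mathcal{U}_+,\mathcal{U}_-,(pr_{\mathcal{V}_1})^*\sigma)$ encode exactly the same geometric information, via the product structure $\phi$. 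The plan is therefore to translate each of the two metric-preservation conditions into conditions on the $\phi$-eigenbundle decomposition $\mathcal{V}=\mathcal{U}_+\oplus\mathcal{U}_-$, and then read off the equivalence.

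First I would establish the ``only if'' direction. Suppose $\nabla^{\mathcal{V}}$ preserves both $g$ and $\mathfrak{g}_{\mathcal{V}}$. By the defining relation (\ref{phi0}), $\phi$ is constructed algebraically from $g$ and $\mathfrak{g}_{\mathcal{V}}$ alone; since a connection preserving two tensors preserves every tensor built functorially from them, $\nabla^{\mathcal{V}}$ must preserve $\phi$ itself (this is the standard fact that $\nabla^{\mathcal{V}}\phi=0$ whenever $\nabla^{\mathcal{V}}g=0$ and $\nabla^{\mathcal{V}}\mathfrak{g}_{\mathcal{V}}=0$, because $\phi=\sharp_{2g}\circ\flat_{\mathfrak{g}_{\mathcal{V}}}$). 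A connection that preserves the product structure $\phi$ automatically preserves its $\pm1$-eigenbundles $\mathcal{U}_\pm$, since parallel transport commutes with $\phi$ and hence maps eigenspaces to eigenspaces. Finally, by Remark \ref{obsauxgUpm} we have $g|_{\mathcal{U}_\pm}=\pm(pr_{\mathcal{V}_1})^*\sigma$, so preservation of $g$ restricted to $\mathcal{U}_\pm$ (which follows from preserving $g$ and $\mathcal{U}_\pm$) is exactly preservation of $(pr_{\mathcal{V}_1})^*\sigma$ on each factor.

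For the ``if'' direction I would reverse this. Assume $\nabla^{\mathcal{V}}$ preserves $\mathcal{U}_+$ and $\mathcal{U}_-$ separately, together with the metrics $(pr_{\mathcal{V}_1})^*\sigma$ on each. Preserving the splitting $\mathcal{V}=\mathcal{U}_+\oplus\mathcal{U}_-$ means $\nabla^{\mathcal{V}}$ preserves $\phi$ (which is $+1$ on $\mathcal{U}_+$ and $-1$ on $\mathcal{U}_-$). I would then reconstruct both metrics from $\sigma$ and the eigenbundle data: by Remark \ref{obsauxgUpm} the para-Hermitian metric is $g=(pr_{\mathcal{V}_1})^*\sigma$ on $\mathcal{U}_+$, $g=-(pr_{\mathcal{V}_1})^*\sigma$ on $\mathcal{U}_-$, and $g(\mathcal{U}_+,\mathcal{U}_-)=0$; since $\nabla^{\mathcal{V}}$ preserves the orthogonal eigenbundles and the metrics on each, it preserves $g$. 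Likewise, by Proposition \ref{strUpm} the double metric satisfies $\mathfrak{g}_{\mathcal{V}}|_{\mathcal{U}_\pm}=2(pr_{\mathcal{V}_1})^*\sigma$ with $\mathcal{U}_+\perp_{\mathfrak{g}_{\mathcal{V}}}\mathcal{U}_-$, so the same argument gives $\nabla^{\mathcal{V}}\mathfrak{g}_{\mathcal{V}}=0$. Hence $\nabla^{\mathcal{V}}$ is a vertical double metric connection.

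The one point requiring care — and the step I expect to be the main obstacle — is verifying that preservation of the orthogonal decomposition plus preservation of the two restricted metrics truly forces preservation of the full tensors $g$ and $\mathfrak{g}_{\mathcal{V}}$ on all of $\mathcal{V}$, rather than only on the diagonal blocks. This is clean precisely because $\mathcal{U}_+$ and $\mathcal{U}_-$ are $g$- and $\mathfrak{g}_{\mathcal{V}}$-orthogonal: the off-diagonal blocks of both metrics vanish identically, so there is no mixed term to control, and a tensor whose block components are each parallel is itself parallel. I would make this explicit by evaluating $(\nabla^{\mathcal{V}}_{\mathfrak{Z}}g)(\mathfrak{W}_+,\mathfrak{W}_-)$ on eigenvector arguments and using $\nabla^{\mathcal{V}}_{\mathfrak{Z}}\mathfrak{W}_\pm\in\Gamma\mathcal{U}_\pm$ to see that it reduces to the already-vanishing restricted derivatives, and symmetrically for $\mathfrak{g}_{\mathcal{V}}$.
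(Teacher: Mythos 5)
Your proposal is correct and follows essentially the same route as the paper: both arguments hinge on the equivalence, via the relation $2g(\mathfrak{Z},\mathfrak{Z}')=\mathfrak{g}_{\mathcal{V}}(\phi\mathfrak{Z},\mathfrak{Z}')$, between preserving the pair $(g,\mathfrak{g}_{\mathcal{V}})$ and preserving $(\phi,\mathfrak{g}_{\mathcal{V}})$, i.e.\ the eigenbundles $\mathcal{U}_\pm$ together with the restricted metrics, which Proposition \ref{strUpm} identifies with $(pr_{\mathcal{V}_1})^*\sigma$ up to a factor. Your write-up is in fact more detailed than the paper's two-sentence proof, in particular in making explicit the block-diagonal argument for the converse direction.
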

\begin{proof} By (\ref{phi01}), the preservation of the pair of metrics $(g,\mathfrak{g}_{\mathcal{V}})$ is equivalent with the preservation of the pair $(\mathfrak{g}_{\mathcal{V}},\phi)$. Therefore, the eigenbundles $\mathcal{U}_\pm$ and the metrics $\mathfrak{g}_{\mathcal{V}}|_{\mathcal{U}_\pm}$ must be preserved too.\end{proof}
\begin{corol}\label{coroldmc} The connection $\nabla^{\mathcal{V}}$ is double metric iff it is expressible as
$$ \nabla^{\mathcal{V}}_{\mathfrak{Z}}(\iota_\pm\mathfrak{Y})
=\iota_\pm(\mathcal{D}^{\pm}_{\mathfrak{Z}}\mathfrak{Y})
\hspace{2mm} (\mathfrak{Y}\in\Gamma\mathcal{V}_1,\mathfrak{Z}\in
\chi(\mathfrak{T}M)),$$
where $\mathcal{D}^{\pm}$ is a pair of $\sigma$-metric connections on $\mathcal{V}_1\approx p^{-1}(TM)$.\end{corol}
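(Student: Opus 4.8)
The plan is to read this corollary as a direct translation of Proposition \ref{propdmc} through the fiberwise isomorphisms $\iota_\pm\colon\mathcal{V}_1\to\mathcal{U}_\pm$ furnished by Proposition \ref{strUpm}. The first thing I would record is that, since $\phi^2=Id$, the product structure $\phi$ splits $\mathcal{V}=\mathcal{U}_+\oplus\mathcal{U}_-$ into its $(\pm1)$-eigenbundles, and that $\iota_\pm$ is precisely the inverse of $pr_{\mathcal{V}_1}|_{\mathcal{U}_\pm}$. Moreover $\iota_\pm$ is a $C^\infty(\mathfrak{T}M)$-linear bundle isomorphism that is an isometry between $(\mathcal{V}_1,\sigma)$ and $(\mathcal{U}_\pm,(pr_{\mathcal{V}_1})^*\sigma)$, because $pr_{\mathcal{V}_1}\circ\iota_\pm=Id$ gives $(pr_{\mathcal{V}_1})^*\sigma(\iota_\pm\mathfrak{Y},\iota_\pm\mathfrak{Y}')=\sigma(\mathfrak{Y},\mathfrak{Y}')$.

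For the forward implication, I would assume $\nabla^{\mathcal{V}}$ is double metric. By Proposition \ref{propdmc} it preserves $\mathcal{U}_\pm$ and the metrics $(pr_{\mathcal{V}_1})^*\sigma$, so I would transport it to $\mathcal{V}_1$ by setting $\mathcal{D}^{\pm}_{\mathfrak{Z}}\mathfrak{Y}=\iota_\pm^{-1}\big(\nabla^{\mathcal{V}}_{\mathfrak{Z}}(\iota_\pm\mathfrak{Y})\big)=pr_{\mathcal{V}_1}\nabla^{\mathcal{V}}_{\mathfrak{Z}}(\iota_\pm\mathfrak{Y})$. Preservation of $\mathcal{U}_\pm$ guarantees $\nabla^{\mathcal{V}}_{\mathfrak{Z}}(\iota_\pm\mathfrak{Y})\in\Gamma\mathcal{U}_\pm$, so that $\iota_\pm^{-1}$ may be applied; the $C^\infty(\mathfrak{T}M)$-linearity of $\iota_\pm$ makes the connection axioms (in particular the Leibniz rule, with its derivation term $(\mathfrak{Z}f)\mathfrak{Y}$) pass through unchanged; and the isometry property together with metricity of $\nabla^{\mathcal{V}}$ on $\mathcal{U}_\pm$ shows that $\mathcal{D}^{\pm}$ preserves $\sigma$. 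This yields exactly the asserted expression.

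Conversely, given a pair $\mathcal{D}^{\pm}$ of $\sigma$-metric connections on $\mathcal{V}_1$, I would use the decomposition $\mathcal{V}=\mathcal{U}_+\oplus\mathcal{U}_-$ to define $\nabla^{\mathcal{V}}$ on all of $\mathcal{V}$: writing a section of $\mathcal{V}$ as $\iota_+\mathfrak{Y}_++\iota_-\mathfrak{Y}_-$, set $\nabla^{\mathcal{V}}_{\mathfrak{Z}}(\iota_+\mathfrak{Y}_++\iota_-\mathfrak{Y}_-)=\iota_+(\mathcal{D}^{+}_{\mathfrak{Z}}\mathfrak{Y}_+)+\iota_-(\mathcal{D}^{-}_{\mathfrak{Z}}\mathfrak{Y}_-)$. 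By construction this preserves each $\mathcal{U}_\pm$, and since each $\iota_\pm$ is an isometry onto $(\mathcal{U}_\pm,(pr_{\mathcal{V}_1})^*\sigma)$ while $\mathcal{D}^{\pm}$ is $\sigma$-metric, $\nabla^{\mathcal{V}}$ preserves $(pr_{\mathcal{V}_1})^*\sigma$; Proposition \ref{propdmc} then certifies that $\nabla^{\mathcal{V}}$ is double metric. The only point needing care is the routine verification that transport along the $C^\infty(\mathfrak{T}M)$-linear isomorphisms $\iota_\pm$ genuinely sends connections to connections and isometries to isometries, and I expect no real obstacle beyond this bookkeeping, since everything reduces to $pr_{\mathcal{V}_1}\circ\iota_\pm=Id$ and the statements of Proposition \ref{strUpm}.
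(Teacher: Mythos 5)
Your proposal is correct and is exactly the argument the paper leaves implicit (its proof of this corollary is simply ``Obvious''): the corollary is the transport of Proposition \ref{propdmc} through the $C^\infty(\mathfrak{T}M)$-linear isometries $\iota_\pm:(\mathcal{V}_1,\sigma)\to(\mathcal{U}_\pm,(pr_{\mathcal{V}_1})^*\sigma)$ supplied by Proposition \ref{strUpm}. Your bookkeeping of both directions, including the Leibniz rule and the metricity transfer, is the intended verification.
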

\begin{proof} Obvious.\end{proof}

The action of a double field over $M$ will be defined by means of a canonical double metric connection obtained by adapting the construction given in \cite{VT}.
\begin{prop}\label{prop1end} There exist invariant procedures to derive double metric connections from the horizontal bundle $\mathcal{H}$ and the components $(\sigma,\psi)$ of a given double field.\end{prop}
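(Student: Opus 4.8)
The plan is to reduce the statement to Corollary \ref{coroldmc} and then to build the required connections by an invariant ``metrize, then (optionally) split'' recipe, following the para-Hermitian construction of \cite{VT}. By Corollary \ref{coroldmc}, giving a vertical double metric connection $\nabla^{\mathcal{V}}$ on $\mathcal{V}$ is the same as giving a pair $\mathcal{D}^{\pm}$ of $\sigma$-metric connections on the bundle $\mathcal{V}_1\approx p^{-1}(TM)$, the link being $\nabla^{\mathcal{V}}_{\mathfrak{Z}}(\iota_\pm\mathfrak{Y})=\iota_\pm(\mathcal{D}^{\pm}_{\mathfrak{Z}}\mathfrak{Y})$ with $\iota_\pm$ the isomorphisms of Proposition \ref{strUpm}. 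Hence it suffices to produce, invariantly from the data $(\mathcal{H},\sigma,\psi)$, two connections on $\mathcal{V}_1$ that preserve $\sigma$; Proposition \ref{propdmc} then certifies that the assembled $\nabla^{\mathcal{V}}$ is double metric. I note at once that $\psi$ enters the assembly automatically, since $\iota_\pm\mathfrak{Y}=(\mathfrak{Y},(\flat_\psi\pm\flat_\sigma)\mathfrak{Y})$ already involves $\flat_\psi$, so even a procedure that outputs $\mathcal{D}^{+}=\mathcal{D}^{-}$ still uses all three ingredients.

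First I would use the horizontal bundle alone to manufacture an auxiliary connection $D$ on $\mathcal{V}_1$. The canonical connection of Proposition \ref{canonicBott} depends only on $\mathcal{H}$, is defined in every tangent direction of $\mathfrak{T}M$, and preserves the subbundle $\mathcal{V}_1$; restricting its covariant derivative to act on sections of $\mathcal{V}_1$ yields such a $D$, with the vertical directions handled by the flat affine part of that connection. This $D$ need not preserve $\sigma$, so the next step is the standard metrization: set $\sigma(\bar{D}_{\mathfrak{Z}}A,B)=\sigma(D_{\mathfrak{Z}}A,B)+\tfrac12(D_{\mathfrak{Z}}\sigma)(A,B)$ for $A,B\in\Gamma\mathcal{V}_1$ and $\mathfrak{Z}\in\chi(\mathfrak{T}M)$. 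The correction is tensorial in $A$ and $\sigma$-self-adjoint, so $\bar{D}$ is again a connection, and since $\sigma$ is symmetric one checks directly that $\bar{D}_{\mathfrak{Z}}\sigma=0$. Thus $\bar{D}$ is a canonical $\sigma$-metric connection built from $(\mathcal{H},\sigma)$, and taking $\mathcal{D}^{+}=\mathcal{D}^{-}=\bar{D}$ already establishes the existence claimed by the proposition.

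To exhibit a genuine \emph{family} of such procedures (justifying the plural in the statement) and to let $\psi$ modify the connections themselves, I would add to $\bar{D}$ a $\sigma$-skew-symmetric, $\psi$-dependent correction with opposite signs, $\mathcal{D}^{\pm}_{\mathfrak{Z}}A=\bar{D}_{\mathfrak{Z}}A\pm\Xi_{\mathfrak{Z}}A$, where $\Xi_{\mathfrak{Z}}$ is defined through the contraction $\sigma(\Xi_{\mathfrak{Z}}A,B)=\tfrac12(\bar{D}_{\mathfrak{Z}}\psi)(A,B)$. Because $\psi$, hence $\bar{D}_{\mathfrak{Z}}\psi$, is a $2$-form, $\Xi_{\mathfrak{Z}}$ is $\sigma$-skew-adjoint, so $\sigma(\mathcal{D}^{\pm}_{\mathfrak{Z}}A,B)+\sigma(A,\mathcal{D}^{\pm}_{\mathfrak{Z}}B)=\mathfrak{Z}\,\sigma(A,B)$ and both $\mathcal{D}^{\pm}$ stay $\sigma$-metric. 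Invoking Corollary \ref{coroldmc} assembles any such pair into a vertical double metric connection; since each ingredient ($D$ from $\mathcal{H}$, the metrization from $\sigma$, the correction from $\psi$) is constructed without auxiliary choices, every member of the family is invariant.

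The main obstacle I anticipate is not a single computation but the bookkeeping that keeps everything simultaneously invariant, tensorial, and metric: one must verify that the base connection $D$ is available in all directions (the vertical ones coming from the affine part of Proposition \ref{canonicBott}), that both the metrizing term and the $\psi$-term are genuinely tensorial in their connection argument so that the Leibniz rule survives, and above all that the $\psi$-correction is \emph{exactly} $\sigma$-skew so that metricity is not destroyed. I would also stress that the recipe is deliberately non-unique --- any further $\sigma$-skew correction preserves double metricity --- which is precisely why the statement speaks of ``procedures''; the remaining freedom is what later gets pinned down by torsion-type normalizations when the canonical connection and the action functional are singled out.
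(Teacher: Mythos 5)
Your argument is correct and follows the same skeleton as the paper's proof: reduce via Corollary \ref{coroldmc} to producing a pair of $\sigma$-metric connections on $\mathcal{V}_1$, start from an invariantly constructed auxiliary connection, repair metricity by the correction $\frac{1}{2}\sharp_\sigma\flat_{D_{\mathfrak{Z}}\sigma}$ (which is literally the paper's transformation (\ref{gotometricc})), and then perturb by a $\sigma$-skew, $\psi$-dependent term with opposite signs before reassembling through $\iota_\pm$. You differ in two concrete choices. For the base connection you restrict the canonical connection of Proposition \ref{canonicBott}, which uses only $\mathcal{H}$; the paper instead transfers $\sigma$ to $\mathcal{H}$ and $\mathcal{V}_2$ to get a metric $\mathfrak{g}_\sigma$ on $\mathfrak{T}M$, takes the connection $\bar{\nabla}^{D^\sigma}$ of (\ref{nablabarD}) associated with its Levi-Civita connection $D^\sigma$, and restricts that to $\mathcal{V}_1$ --- the payoff being that the leafwise part is already the Levi-Civita connection of $\sigma$ along the leaves of $\mathcal{V}_1$, so the metrizing correction is smaller. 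For the $\psi$-term you use $\frac{1}{2}(\bar{D}_{\mathfrak{Z}}\psi)(A,B)$, defined in all directions $\mathfrak{Z}$, whereas the paper uses $\pm\frac{1}{2}\sharp_\sigma[i(\mathfrak{Y}_1)i(pr_{\mathcal{V}_1}\mathfrak{Z})d_{\mathcal{V}_1}\psi]$, which vanishes for $\mathfrak{Z}\in\mathcal{H},\mathcal{V}_2$ and involves the leafwise exterior derivative rather than a covariant derivative; that specific form is what gets massaged in Proposition \ref{prop2end} into the field-adapted connection with vanishing Gualtieri torsion, so your variant, while perfectly adequate for the existence claim here, would not plug directly into the subsequent normalization. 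Both corrections are invariant and $\sigma$-skew-adjoint, so both prove the proposition; your remark that $\psi$ already enters through $\iota_\pm$ even when $\mathcal{D}^+=\mathcal{D}^-$ is also correct and corresponds to the paper's intermediate connection $\mathcal{D}^0$.
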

\begin{proof} To prove the assertion, we give below one such procedure. We start by transferring the component metric $\sigma$ of the field to $\mathcal{V}_2\approx\mathcal{V}_1^*$ and to the horizontal bundle $\mathcal{H}\approx\mathcal{V}_1$. By adding up the results we get a metric $\mathfrak{g}_\sigma$ on $\mathfrak{T}M$ and there exists a corresponding connection $\bar{\nabla}^{D^\sigma}$ given by (\ref{nablabarD}), where $D^\sigma$ is the Levi-Civita connection of the metric $\mathfrak{g}_\sigma$. The restriction $D'^0_{\mathfrak{Z}}=\bar{\nabla}^{D^\sigma}_{\mathfrak{Z}} |_{\mathcal{V}_1}$ ($\mathfrak{Z}\in\chi(\mathfrak{T}M)$) yields a connection on $\mathcal{V}_1$, which, for $\mathfrak{Z}\in\mathcal{V}_1$, is just the Levi-Civita connection of $\sigma$ along the leaves tangent to $\mathcal{V}_1$. The invariant transformation
\begin{equation}\label{gotometricc}
D^0_{\mathfrak{Z}}\mathfrak{Y}_1=
D'^0_{\mathfrak{Z}}\mathfrak{Y}_1
+\frac{1}{2}
\sharp_\sigma\flat_{D'^0_{\mathfrak{Z}}\sigma} \mathfrak{Y}_1,\end{equation} where $D'^0_{\mathfrak{Z}}\sigma\in\Gamma\odot^2\mathcal{V}^*_1$,
yields a connection that preserves the metric $\sigma$ along any path (check by a calculation).
Then, following	 Corollary \ref{coroldmc}, we get a double metric connection $\mathcal{D}^0$ on $\mathcal{V}$ that corresponds to the pair $(D^0,D^0)$.

Now, we add a torsion defined by the field component $\psi$ and define a pair of connections
on $\mathcal{V}_1$ by
$$ D'^\pm_{\mathfrak{Z}}\mathfrak{Y}_1=
D^0_{\mathfrak{Z}}\mathfrak{Y}_1\pm\frac{1}{2}
\sharp_\sigma[i(\mathfrak{Y}_1) i(pr_{\mathcal{V}_1}\mathfrak{Z})d_{\mathcal{V}_1}\psi]
\hspace{2mm}(\mathfrak{Y}_1\in\mathcal{V}_1),$$
where $d_{\mathcal{V}_1}$ is the exterior differential along the leaves of $\mathcal{V}_1$.
The additional term is a vector in $\mathcal{V}_1$ and vanishes for $\mathfrak{Z}\in\mathcal{H},\mathcal{V}_2$. $D'^\pm$ still preserve $\sigma$ under parallel translations along paths in the leaves of $\mathcal{V}_1$. Then, we use	 transformations (\ref{gotometricc}) to change $D'^\pm$ to $\sigma$-preserving connections $D^\pm$ and introduce the double metric connection $\mathcal{D}$ on $\mathcal{V}$, which corresponds to the pair $D^\pm$ by Corollary \ref{coroldmc}.\end{proof}

To follow the footsteps of physical double field theory, we have to use a connection that is related to the $C$-bracket of string theory literature. We will use the {\it metric bracket} on $\mathcal{V}$ used in \cite{{VJMP},{VT}}.
\begin{defin}\label{defmetricbr} {\rm
Let $\mathfrak{g}_{\mathcal{V}}$ be a vertical metric and $\nabla$ a $\mathfrak{g}_{\mathcal{V}}$-metric connection on $\mathcal{V}$. The {\it operation $\wedge_{\nabla}$} is defined by the formula $$\mathfrak{g}_{\mathcal{V}}(\mathfrak{Y},\mathfrak{Y}' \wedge_{\nabla}\mathfrak{Y}'')=\frac{1}{2}
[\mathfrak{g}_{\mathcal{V}}(\mathfrak{Y}', \nabla_{\mathfrak{Y}}\mathfrak{Y}'') -\mathfrak{g}_{\mathcal{V}}(\mathfrak{Y}'', \nabla_{\mathfrak{Y}}\mathfrak{Y}')],$$
where all the arguments are in $\mathcal{V}$.
The {\it$\mathfrak{g}_{\mathcal{V}}$-metric bracket} is the bracket defined by the formula
$$[\mathfrak{Y}',\mathfrak{Y}'']_{\mathfrak{g}_{\mathcal{V}}}=
\mathcal{D}^0_{\mathfrak{Y}'}\mathfrak{Y}''
-\mathcal{D}^0_{\mathfrak{Y}''}\mathfrak{Y}'  -\mathfrak{Y}'\wedge_{\mathcal{D}^0}\mathfrak{Y}''.$$}\end{defin}

The name ``metric bracket" comes from the following properties \cite{{VJMP},{VT}}
\begin{equation}\label{prmbr} \begin{array}{l}
\mathfrak{Y}(\mathfrak{g}_{\mathcal{V}}(\mathfrak{Y'},
\mathfrak{Y}''))
=\mathfrak{g}_{\mathcal{V}}([\mathfrak{Y},
\mathfrak{Y}']_{\mathfrak{g}_{\mathcal{V}}}
+\frac{1}{2}grad_{\mathfrak{g}_{\mathcal{V}}}
(\mathfrak{g}_{\mathcal{V}}(\mathfrak{Y},\mathfrak{Y}'), \mathfrak{Y}'')\vspace*{2mm}\\

+\mathfrak{g}_{\mathcal{V}}(\mathfrak{Y}',
[\mathfrak{Y},\mathfrak{Y}'']_{\mathfrak{g}_{\mathcal{V}}}
+\frac{1}{2}grad_{\mathfrak{g}_{\mathcal{V}}}
(\mathfrak{g}_{\mathcal{V}}(\mathfrak{Y},
\mathfrak{Y}''))\vspace*{2mm}\\

[\mathfrak{Y},f\mathfrak{Y}']_{\mathfrak{g}_{\mathcal{V}}}
=f[\mathfrak{Y}, \mathfrak{Y}']_{\mathfrak{g}_{\mathcal{V}}} +(\mathfrak{Y}f)\mathfrak{Y}'
-\frac{1}{2}\mathfrak{g}_{\mathcal{V}}(\mathfrak{Y},
\mathfrak{Y}')grad_{\mathfrak{g}_{\mathcal{V}}}\, f.\end {array}\end{equation}

The metric bracket leads to new invariants of $\mathfrak{g}_{\mathcal{V}}$-preserving connections $\nabla$ \cite{VT}.
\begin{defin}\label{deformed invariants} {\rm The bracket $$[\mathfrak{Y},\mathcal{Y}']^\nabla_{\mathfrak{g}_{\mathcal{V}}} =[\mathfrak{Y},\mathcal{Y}']_{\mathfrak{g}_{\mathcal{V}}}+ \mathfrak{Y}\wedge_\nabla\mathcal{Y}'$$ is the {\it deformed Lie bracket}. The vertical tensor field
$$T_\nabla^\sigma(\mathfrak{Y},\mathcal{Y}')= \nabla_{\mathfrak{Y}}\mathfrak{Y}'-\nabla_{\mathfrak{Y}'}\mathfrak{Y} -[\mathfrak{Y},\mathcal{Y}']^\nabla_{\mathfrak{g}_{\mathcal{V}}}$$ is the {\it deformed torsion} of $\nabla$. The vertical tensor field
$$\tau_\nabla(\mathfrak{Y},\mathcal{Y}',\mathfrak{Y}'')=
\mathfrak{g}_{\mathcal{V}}(T_\nabla^{\mathfrak{g}_{\mathcal{V}}}
(\mathfrak{Y},\mathcal{Y}'), \mathfrak{Y}'')$$ is the {\it Gualtieri torsion} of $\nabla$.}
\end{defin}
\begin{prop}\label{exprGtors} If $\nabla$ is a $\mathfrak{g}_{\mathcal{V}}$-metric connection and if we denote $$\Theta(\mathfrak{Y},\mathfrak{Y}')=\nabla_{\mathfrak{Y}}\mathfrak{Y}' -\mathcal{D}^0_{\mathfrak{Y}}\mathfrak{Y}',\;\;
\Xi(\mathfrak{Y},\mathfrak{Y}',\mathfrak{Y}'')=
\mathfrak{g}_{\mathcal{V}}(\Theta(\mathfrak{Y},\mathfrak{Y}'),
\mathfrak{Y}''),$$ we get
$$\begin{array}{l}
\Xi(\mathfrak{Y},\mathfrak{Y}',\mathfrak{Y}'')= -\Xi(\mathfrak{Y},\mathfrak{Y}'', \mathfrak{Y}'),\vspace*{2mm}\\
\tau_\nabla(\mathfrak{Y},\mathfrak{Y}',\mathfrak{Y}'')=
\sum_{Cycl(\mathfrak{Y},\mathfrak{Y}',\mathfrak{Y}'')}
\Xi(\mathfrak{Y},\mathfrak{Y}',\mathfrak{Y}'').\end{array}$$\end{prop}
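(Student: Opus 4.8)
The plan is to exploit the fact that $\Theta=\nabla-\mathcal{D}^0$ is the difference of two linear connections, hence a genuine $(1,2)$-tensor field (in particular $C^\infty$-linear in all three slots of $\Xi$), and that \emph{both} $\nabla$ and $\mathcal{D}^0$ are $\mathfrak{g}_{\mathcal{V}}$-metric — the latter because $\mathcal{D}^0$ is a double metric connection by Proposition \ref{prop1end} and hence preserves $\mathfrak{g}_{\mathcal{V}}$ in the sense of Definition \ref{doublec}. First I would establish the skew-symmetry of $\Xi$ in its last two arguments. Writing the metric-compatibility identity
$$\mathfrak{Y}(\mathfrak{g}_{\mathcal{V}}(\mathfrak{Y}',\mathfrak{Y}''))= \mathfrak{g}_{\mathcal{V}}(\nabla_{\mathfrak{Y}}\mathfrak{Y}',\mathfrak{Y}'') +\mathfrak{g}_{\mathcal{V}}(\mathfrak{Y}',\nabla_{\mathfrak{Y}}\mathfrak{Y}'')$$
for both $\nabla$ and $\mathcal{D}^0$ and subtracting, the left-hand sides and the derivatives of the metric cancel, leaving $\Xi(\mathfrak{Y},\mathfrak{Y}',\mathfrak{Y}'')+\Xi(\mathfrak{Y},\mathfrak{Y}'',\mathfrak{Y}')=0$, which is exactly the first asserted identity.

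For the second identity I would unwind the definition of the Gualtieri torsion. Substituting the metric bracket of Definition \ref{defmetricbr} into the deformed Lie bracket and then into the deformed torsion, the two $\mathcal{D}^0$-derivatives group with the two $\nabla$-derivatives, yielding
$$T_\nabla^{\mathfrak{g}_{\mathcal{V}}}(\mathfrak{Y},\mathfrak{Y}') =\Theta(\mathfrak{Y},\mathfrak{Y}')-\Theta(\mathfrak{Y}',\mathfrak{Y}) +\mathfrak{Y}\wedge_{\mathcal{D}^0}\mathfrak{Y}'-\mathfrak{Y}\wedge_\nabla\mathfrak{Y}',$$
so the whole computation reduces to controlling the difference of the two $\wedge$-operations. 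Using the defining formula of $\wedge$ (Definition \ref{defmetricbr}) together with $\nabla-\mathcal{D}^0=\Theta$, I would obtain
$$\mathfrak{g}_{\mathcal{V}}(\mathfrak{Y}\wedge_\nabla\mathfrak{Y}' -\mathfrak{Y}\wedge_{\mathcal{D}^0}\mathfrak{Y}',\mathfrak{Y}'') =\tfrac{1}{2}\big[\Xi(\mathfrak{Y}'',\mathfrak{Y}',\mathfrak{Y}) -\Xi(\mathfrak{Y}'',\mathfrak{Y},\mathfrak{Y}')\big] =-\Xi(\mathfrak{Y}'',\mathfrak{Y},\mathfrak{Y}'),$$
the last equality being the skew-symmetry established in the first step.

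Finally I would pair $T_\nabla^{\mathfrak{g}_{\mathcal{V}}}(\mathfrak{Y},\mathfrak{Y}')$ with $\mathfrak{Y}''$. The first two terms contribute $\Xi(\mathfrak{Y},\mathfrak{Y}',\mathfrak{Y}'')-\Xi(\mathfrak{Y}',\mathfrak{Y},\mathfrak{Y}'')$, while the $\wedge$-difference contributes $+\Xi(\mathfrak{Y}'',\mathfrak{Y},\mathfrak{Y}')$ (the sign flips because the term appears as $\mathfrak{Y}\wedge_{\mathcal{D}^0}\mathfrak{Y}'-\mathfrak{Y}\wedge_\nabla\mathfrak{Y}'$). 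Applying the skew-symmetry of $\Xi$ to the middle summand, $-\Xi(\mathfrak{Y}',\mathfrak{Y},\mathfrak{Y}'')=\Xi(\mathfrak{Y}',\mathfrak{Y}'',\mathfrak{Y})$, the three summands assemble precisely into the cyclic sum $\sum_{Cycl}\Xi(\mathfrak{Y},\mathfrak{Y}',\mathfrak{Y}'')$, as claimed. The only delicate point, and the step I expect to demand the most care, is the bookkeeping of argument positions in the two $\wedge$-operations: their defining formula places the differentiated entries in the second and third slots rather than the first, so I must track these permutations consistently and invoke the skew-symmetry at exactly the right moments; once this is done the remainder is purely formal.
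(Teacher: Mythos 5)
Your proposal is correct and follows the same route as the paper, which simply states that the first identity follows from the metric character of $\nabla$ and $\mathcal{D}^0$ and the second from unwinding the definition of $\tau_\nabla$; your computation is a faithful and accurate expansion of exactly that "straightforward calculation," including the correct handling of the argument positions in the $\wedge$-operations.
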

\begin{proof} The first equality follows from the metric character of the two connections. The second follows from the definition of $\tau_\nabla$ by a straightforward calculation.\end{proof}
\begin{corol}\label{coroltau} The Gualtieri torsion $\tau_\nabla$ is totally skew symmetric.\end{corol}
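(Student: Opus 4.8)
The plan is to extract everything from Proposition \ref{exprGtors}, which already packages the two structural facts I need. Recall that it introduces $\Xi(\mathfrak{Y},\mathfrak{Y}',\mathfrak{Y}'')=\mathfrak{g}_{\mathcal{V}}(\Theta(\mathfrak{Y},\mathfrak{Y}'),\mathfrak{Y}'')$ and establishes, first, that $\Xi$ is skew-symmetric in its last two arguments,
$$\Xi(\mathfrak{Y},\mathfrak{Y}',\mathfrak{Y}'')=-\Xi(\mathfrak{Y},\mathfrak{Y}'',\mathfrak{Y}'),$$
and, second, that the Gualtieri torsion is the cyclic sum
$$\tau_\nabla(\mathfrak{Y},\mathfrak{Y}',\mathfrak{Y}'')=\sum_{Cycl(\mathfrak{Y},\mathfrak{Y}',\mathfrak{Y}'')}\Xi(\mathfrak{Y},\mathfrak{Y}',\mathfrak{Y}'').$$
The whole argument will be a purely combinatorial deduction from these two identities; no further geometry is required.

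First I would observe that, being written as a cyclic sum over its three arguments, $\tau_\nabla$ is manifestly invariant under cyclic permutations, so $\tau_\nabla(\mathfrak{Y},\mathfrak{Y}',\mathfrak{Y}'')=\tau_\nabla(\mathfrak{Y}',\mathfrak{Y}'',\mathfrak{Y})$. Next I would check skew-symmetry under the transposition of the last two slots by expanding and applying the skewness of $\Xi$ termwise: writing out $\tau_\nabla(\mathfrak{Y},\mathfrak{Y}'',\mathfrak{Y}')=\Xi(\mathfrak{Y},\mathfrak{Y}'',\mathfrak{Y}')+\Xi(\mathfrak{Y}'',\mathfrak{Y}',\mathfrak{Y})+\Xi(\mathfrak{Y}',\mathfrak{Y},\mathfrak{Y}'')$ and replacing each term via $\Xi(\cdot,\cdot,\cdot)=-\Xi(\cdot,\cdot,\cdot)$ with the last two arguments swapped, each summand reproduces minus a summand of $\tau_\nabla(\mathfrak{Y},\mathfrak{Y}',\mathfrak{Y}'')$, giving $\tau_\nabla(\mathfrak{Y},\mathfrak{Y}'',\mathfrak{Y}')=-\tau_\nabla(\mathfrak{Y},\mathfrak{Y}',\mathfrak{Y}'')$.

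Finally I would combine the two: cyclic invariance together with skew-symmetry in one adjacent pair forces skew-symmetry in every adjacent pair, since the adjacent transpositions generate the full permutation group $S_3$. Concretely, $\tau_\nabla(\mathfrak{Y}',\mathfrak{Y},\mathfrak{Y}'')$ equals $\tau_\nabla(\mathfrak{Y},\mathfrak{Y}'',\mathfrak{Y}')$ by one cyclic shift, which in turn equals $-\tau_\nabla(\mathfrak{Y},\mathfrak{Y}',\mathfrak{Y}'')$ by the step just proved; hence $\tau_\nabla$ is also skew in its first two arguments, and therefore totally skew-symmetric. I do not expect any genuine obstacle here: the only point requiring a moment's care is the bookkeeping in the termwise expansion, making sure that each of the three cyclic summands is matched correctly with its swapped partner so that the overall sign comes out uniformly as $-1$; everything else is automatic from the cyclic-plus-one-transposition principle.
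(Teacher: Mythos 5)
Your argument is correct and is precisely the elementary combinatorial deduction the paper has in mind when it dismisses this corollary with ``Obvious'': cyclic invariance of the cyclic sum plus the skewness of $\Xi$ in its last two slots under one transposition generate total skew-symmetry. The termwise matching in your expansion of $\tau_\nabla(\mathfrak{Y},\mathfrak{Y}'',\mathfrak{Y}')$ is bookkept correctly, so there is nothing to add.
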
 \begin{proof} Obvious.\end{proof}
\begin{prop}\label{prop2end} There exist invariant procedures to derive a double metric connection with a vanishing Gualtieri torsion starting from the data of a given double field.\end{prop}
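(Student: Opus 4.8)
The plan is to follow the author's indication and adapt the construction of \cite{VT}: start from one of the double metric connections supplied by Proposition \ref{prop1end} — say the $\psi$-deformed connection $\mathcal{D}$ attached to the pair $D^{\pm}$ — and correct it by a vertical tensor so as to annihilate its Gualtieri torsion while remaining in the class of double metric connections. The procedure is invariant because every ingredient ($\mathcal{D}$, the reference $\mathcal{D}^0$, the metric $\mathfrak{g}_{\mathcal{V}}$ and the eigenbundles $\mathcal{U}_\pm$) is manufactured invariantly out of the data $(\mathcal{H},\sigma,\psi)$.

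First I would isolate the two facts that drive the argument. By Proposition \ref{exprGtors}, for any $\mathfrak{g}_{\mathcal{V}}$-metric connection $\nabla$ the Gualtieri torsion is the cyclic sum $\tau_\nabla=\sum_{Cycl}\Xi$, where $\Xi(\mathfrak{Y},\mathfrak{Y}',\mathfrak{Y}'')=\mathfrak{g}_{\mathcal{V}}(\Theta(\mathfrak{Y},\mathfrak{Y}'),\mathfrak{Y}'')$ with $\Theta=\nabla-\mathcal{D}^0$, and by Corollary \ref{coroltau} this tensor $\tau_\nabla$ is totally skew. Hence, replacing $\mathcal{D}$ by $\nabla=\mathcal{D}+A$ for a vertical $(1,2)$-tensor $A$, the linearity of $\Theta\mapsto\Xi$ gives $\tau_\nabla=\tau_{\mathcal{D}}+\sum_{Cycl}\Xi_A$ with $\Xi_A=\mathfrak{g}_{\mathcal{V}}(A(\cdot,\cdot),\cdot)$. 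The whole problem then reduces to producing an \emph{admissible} $A$ with $\sum_{Cycl}\Xi_A=-\tau_{\mathcal{D}}$, where admissibility means that $\nabla$ is still double metric. Since $\mathcal{D}$ and $\nabla$ preserve $\mathfrak{g}_{\mathcal{V}}$ and the eigenbundles $\mathcal{U}_\pm$ (Proposition \ref{propdmc}), this translates into two conditions on $A$: that $\Xi_A$ be skew in its last two arguments, and that it vanish whenever those two arguments lie in different eigenbundles $\mathcal{U}_\pm$.

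The core step is to solve this equation, which I would do by decomposing everything along $\mathcal{V}=\mathcal{U}_+\oplus\mathcal{U}_-$, using $\mathcal{U}_+\perp_{\mathfrak{g}_{\mathcal{V}}}\mathcal{U}_-$ from Proposition \ref{strUpm}. Write the totally skew $\tau_{\mathcal{D}}$ in its four components living in $\wedge^3\mathcal{U}_+^*$, $\wedge^2\mathcal{U}_+^*\wedge\mathcal{U}_-^*$, $\mathcal{U}_+^*\wedge\wedge^2\mathcal{U}_-^*$, $\wedge^3\mathcal{U}_-^*$. For the two pure components I would take $\Xi_A$ totally skew with values in the same eigenbundle, so that the cyclic sum reproduces $3\,\Xi_A$ and the matching is immediate. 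For the two mixed components I would park the minority eigentype in the first (derivation) slot of $\Xi_A$ — e.g. set $\Xi_A(\mathfrak{Z}_-,\mathfrak{Y}_+,\mathfrak{Y}'_+)=-\tau_{\mathcal{D}}(\mathfrak{Y}_+,\mathfrak{Y}'_+,\mathfrak{Z}_-)$ — so that in the cyclic sum on arguments of type $(+,+,-)$ the two terms whose last pair is mixed drop out by admissibility and only this block survives. Checking the four eigentypes shows each admissible block of $\Xi_A$ feeds exactly one component of $\tau_{\mathcal{D}}$, so the cyclic-sum map surjects onto the totally skew vertical $3$-tensors; picking the canonical (totally skew within each pure block) solution and raising the last index by $\mathfrak{g}_{\mathcal{V}}$ yields an admissible $A$, whence $\nabla=\mathcal{D}+A$ is a double metric connection with $\tau_\nabla=0$.

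The point requiring most care — and the main obstacle — is precisely this admissibility bookkeeping: because $A$ must keep the connection double metric, the allowed $\Xi_A$ are forced to be block-diagonal in $\mathcal{U}_\pm$ in their last two slots, and one must verify that, despite this constraint, the cyclic-sum map is still onto all totally skew vertical $3$-tensors. The eigentype decomposition is exactly what settles this, the mixed components being reachable only because the odd eigentype can be placed in the unconstrained derivation slot. The remaining verifications — that $A$ is genuinely tensorial, that the construction uses only invariantly defined objects, and that the factor-$3$ (pure) and factor-$1$ (mixed) matchings are mutually consistent because $\tau_{\mathcal{D}}$ is totally skew — are routine consequences of Corollary \ref{coroltau} and Proposition \ref{exprGtors}.
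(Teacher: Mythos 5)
Your argument is correct and establishes the proposition, but it handles the crucial step differently from the paper. The paper works in two stages: it first replaces the pair $D^{\pm}$ by connections $\tilde{D}^{\pm}$ obtained by adding a term built from the metric bracket, $pr_{\mathcal{V}_1}pr_{\mathcal{U}_\pm}[pr_{\mathcal{U}_\mp}\mathfrak{Y},\iota_\pm\mathfrak{Y}_1]_\sigma$, and invokes a technical computation from \cite{VT} (the one behind formula (4.15) there) to conclude that the resulting double metric connection $\tilde{\mathcal{D}}$ has Gualtieri torsion whose mixed components (two arguments in $\mathcal{U}_\pm$, one in $\mathcal{U}_\mp$) vanish; only then does it apply the correction $\mathfrak{g}_{\mathcal{V}}(\Phi(\cdot,\cdot),\cdot)=-\frac{1}{3}\tau_{\tilde{\mathcal{D}}}$, which is admissible precisely because what remains of $\tau_{\tilde{\mathcal{D}}}$ is of pure eigentype. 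You compress this into a single algebraic step: admissibility forces $\Xi_A$ to be block-diagonal and skew in its last two slots, but the first (derivation) slot is unconstrained, so the mixed components of $-\tau_{\mathcal{D}}$ are reachable by parking the minority eigentype there, while the pure components are matched by the same $\tfrac{1}{3}$ totally skew ansatz the paper uses. Your surjectivity check of the cyclic-sum map is sound, and the consistency of the block matchings across reorderings of a mixed triple is automatic since $\sum_{Cycl}\Xi_A=\tau_{\mathcal{D}+A}-\tau_{\mathcal{D}}$ is a difference of Gualtieri torsions and hence totally skew by Corollary \ref{coroltau}; all ingredients are invariantly built from $(\mathcal{H},\sigma,\psi)$. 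Your route buys self-containment and transparency --- it avoids importing the bracket calculations from \cite{VT} and makes explicit why block-diagonality is not an obstruction --- whereas the paper's route produces the intermediate connection $\tilde{\mathcal{D}}$, whose bracket-type correction parallels standard constructions in generalized geometry and keeps the construction aligned with the double field theory literature. The two resulting connections need not coincide (the cyclic-sum map has a nontrivial kernel on admissible tensors), but since the proposition only asserts the existence of an invariant procedure, this is immaterial.
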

\begin{proof} We may use the procedure given in \cite{VT}.
Change the connections $D^\pm$ by the transformation
$$\tilde{D}^\pm_{\mathfrak{X}}\mathfrak{Y} =D^\pm_{\mathfrak{X}}\mathfrak{Y}_1,\;
\tilde{D}^\pm_{\mathfrak{Y}}\mathfrak{Y}_1 =D^\pm_{\mathfrak{Y}}\mathfrak{Y}_1 +pr_{\mathcal{V}_1}
pr_{\mathcal{U}_\pm}[pr_{\mathcal{U}_\mp}\mathfrak{Y}, \iota_\pm\mathfrak{Y}_1]_\sigma,$$
where $\mathfrak{X}\in\mathcal{H},\mathfrak{Y}\in\mathcal{V}, \mathfrak{Y}_1\in\mathcal{V}_1$.
A technical calculation, namely the one made for formula (4.15) of \cite{VT}, which uses (\ref{prmbr}), shows that $\tilde{D}^\pm$ preserve the metric $\sigma$. Thus, the change provides a double metric connection $\tilde{\mathcal{D}}$ on $\mathcal{V}$ that corresponds to the pair $\tilde{D}^\pm$. Another technical calculation shows that the Gualtieri torsion of $\tilde{\mathcal{D}}$ vanishes if two arguments belong to $\mathcal{U}_\pm$ and the third to $\mathcal{U}_\mp$.
Now, denote by $\tilde{\Theta},\bar{\Theta}$ the values of the tensor $\Theta$ of Proposition \ref{exprGtors} for the connections $\tilde{\mathcal{D}},\bar{\mathcal{D}}$ where $\bar{\mathcal{D}}= \tilde{\mathcal{D}}+\Phi$ for an arbitrary tensor $\Phi$. Then, we get
$$\tau_{\bar{D}}(\mathfrak{Y},\mathfrak{Y}',\mathfrak{Y}'') =\tau_{\tilde{D}}(\mathfrak{Y},\mathfrak{Y}',\mathfrak{Y}'')+
\mathfrak{g}_{\mathcal{V}}(\Phi(\mathfrak{Y},\mathfrak{Y}'), \mathfrak{Y}'')$$ and there is a unique choice of $\Phi$ such that the second term in the right hand side is totally skew symmetric and $\tau_{\bar{D}}=0$, given by
$$\mathfrak{g}_{\mathcal{V}}(\Phi(\mathfrak{Y},\mathfrak{Y}'), \mathfrak{Y}'')=-\frac{1}{3}\tau_{\tilde{D}}
(\mathfrak{Y},\mathfrak{Y}',\mathfrak{Y}'').$$
We regard $\bar{\mathcal{D}}$ as the connection required by the proposition and call it the {\it field-adapted connection}.
\end{proof}

Since $\bar{\mathcal{D}}$-covariant derivatives in vertical directions work like on para-Hermitian manifolds, we can transfer the definition of the action given in \cite{VT}, thereby, keeping close to double field theory.
\begin{defin}\label{curbdeform} {\rm The vertical tensor field
$$R^{\mathfrak{g}_{\mathcal{V}}}_{\bar{\mathcal{D}}}
(\mathfrak{Y},\mathfrak{Y}')\mathfrak{Y}''=
\bar{\mathcal{D}}_{\mathfrak{Y}}
\bar{\mathcal{D}}_{\mathfrak{Y}'}\mathfrak{Y}''
-\bar{\mathcal{D}}_{
\mathfrak{Y}}\bar{\mathcal{D}}_{\mathfrak{Y}'}\mathfrak{Y}''
-\bar{\mathcal{D}}_{[\mathfrak{Y},\mathcal{Y}']^{\bar{\mathcal{D}}}_{
\mathfrak{g}_{\mathcal{V}}}}\mathfrak{Y}''$$ is the {\it deformed, vertical curvature} of the connection $\bar{\mathcal{D}}$.
The vertical tensor field
$$ \begin{array}{r}
Ric^{\mathfrak{g}_{\mathcal{V}}}_{\bar{\mathcal{D}}}(\mathfrak{Y}, \mathfrak{Y}') =
 \frac{1}{2}\sum_{i=1}^m[
<\theta^i,R^{\mathfrak{g}_{\mathcal{V}}}_{
\bar{\mathcal{D}}}(\frac{\partial}{\partial y^i},\mathfrak{Y})\mathfrak{Y}'>+
<\kappa_i,R^{\mathfrak{g}_{\mathcal{V}}}_{
\bar{\mathcal{D}}}(\frac{\partial}{\partial z_i},
\mathfrak{Y})\mathfrak{Y}'>\vspace*{2mm}\\ +
<\theta^i,R^{\mathfrak{g}_{\mathcal{V}}}_{
\bar{\mathcal{D}}}(\frac{\partial}{\partial y^i},\mathfrak{Y}')\mathfrak{Y}>+
<\kappa_i,R^{\mathfrak{g}_{\mathcal{V}}}_{
\bar{\mathcal{D}}}(\frac{\partial}{\partial z_i},
\mathfrak{Y}')\mathfrak{Y}>], \end{array}$$ where $\theta^i,\kappa_i$ are defined by (\ref{hcobase}),
is the {\it deformed, vertical, Ricci curvature} of the connection $\bar{\mathcal{D}}$.
The scalar
$$ \rho^{\mathfrak{g}_{\mathcal{V}}}_{
\bar{\mathcal{D}}}=\sum_{q,s=1}^{2m}
\mathfrak{g}_{\mathcal{V}}^{qs}
Ric^{\mathfrak{g}_{\mathcal{V}}}_{
\bar{\mathcal{D}}qs}, $$
where the tensor components are with respect to any local basis of the bundle $\mathcal{V}$, is the {\it deformed, vertical, scalar curvature} of the connection $\bar{\mathcal{D}}$.}\end{defin}

If the manifold $M$ is oriented and if $(x^i)$ are	positive local charts, the double field $(\mathcal{H},\mathfrak{g}_{\mathcal{V}})$ yields the volume form
$$ d(vol)= |det\,\sigma|^{1/2}dx^1\wedge...\wedge dx^m
\wedge \theta^1\wedge...\wedge \theta^m\wedge \kappa_1\wedge...\wedge \kappa_m$$
on $\mathfrak{T}M$ (the $dx^i$ and $\theta^i$ portions of the form multiply by $J^{-1}$ and the $\kappa_i$ portion multiplies by $J$ under a
coordinate transformation $\tilde{x}^i=\tilde{x}^i(x^j)$ with the Jacobian $J$). Now, we can formulate the final result as
\begin{prop}\label{ultima} Let $(\mathcal{H},\mathfrak{g}_{\mathcal{V}})$ be a double field over the manifold $M$ with {\it density} $\varphi\in C^\infty(\mathfrak{T}M)$. Then, provided that the integral is finite, the formula
$$ \mathcal{A}(\mathfrak{g}_{\mathcal{V}},\mathcal{H})
=\int_{\mathfrak{T}M}e^{-2\varphi}
\rho^{\mathcal{V}}_{\mathfrak{g}_{\mathcal{V}}}d(vol)$$
yields a well defined action functional of the field.\end{prop}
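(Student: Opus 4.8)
The plan is to show that the integrand $e^{-2\varphi}\rho^{\mathfrak{g}_{\mathcal{V}}}_{\bar{\mathcal{D}}}\,d(vol)$ is a globally well-defined $3m$-form on $\mathfrak{T}M$ whose construction uses only the double field $(\mathcal{H},\mathfrak{g}_{\mathcal{V}})$ (equivalently the triple $(\mathcal{H},\sigma,\psi)$) together with the density $\varphi$, so that, once $M$ is oriented and the integral is assumed finite, the value $\mathcal{A}$ depends on no auxiliary choice and is a genuine functional of the field. Since $e^{-2\varphi}$ is manifestly a globally defined positive function, the verification splits into two independent parts: that the deformed vertical scalar curvature $\rho^{\mathfrak{g}_{\mathcal{V}}}_{\bar{\mathcal{D}}}$ of Definition \ref{curbdeform} is a well-defined scalar function, and that $d(vol)$ is a well-defined volume form.

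For the scalar curvature I would first recall that the field-adapted connection $\bar{\mathcal{D}}$ of Proposition \ref{prop2end} is produced by invariant procedures from $(\mathcal{H},\sigma,\psi)$, hence is canonically attached to the field and globally defined on $\mathcal{V}$. The next, and main, step is to check that the deformed vertical curvature $R^{\mathfrak{g}_{\mathcal{V}}}_{\bar{\mathcal{D}}}$ is tensorial in its three vertical arguments. This is the delicate point, because the operator contracts $\bar{\mathcal{D}}$ against the metric bracket $[\,\cdot,\cdot\,]^{\bar{\mathcal{D}}}_{\mathfrak{g}_{\mathcal{V}}}$ rather than the Lie bracket, so the usual cancellation that makes a curvature operator $C^\infty$-linear must be re-derived from the specific Leibniz and $C^\infty$-linearity properties (\ref{prmbr}) of the metric bracket. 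I expect this to be exactly the computation already carried out on para-Hermitian manifolds in \cite{VT}; since $\bar{\mathcal{D}}$-differentiation in vertical directions obeys the same formal rules here, that verification transfers and yields a genuine $(1,3)$-tensor. Granting this, the contraction defining $Ric^{\mathfrak{g}_{\mathcal{V}}}_{\bar{\mathcal{D}}}$ is, by (\ref{hcobase}), the symmetrized trace over $\mathcal{V}$ of the endomorphism $\mathfrak{Z}\mapsto R^{\mathfrak{g}_{\mathcal{V}}}_{\bar{\mathcal{D}}}(\mathfrak{Z},\cdot)\,\cdot$, because $\{\partial/\partial y^i,\partial/\partial z_i\}$ and $\{\theta^i,\kappa_i\}$ are mutually dual bases of $\mathcal{V}$ and $\mathcal{V}^*$; being a symmetrized trace it is independent of the basis and so a global symmetric $(0,2)$-tensor. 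Finally $\rho^{\mathfrak{g}_{\mathcal{V}}}_{\bar{\mathcal{D}}}=\mathfrak{g}_{\mathcal{V}}^{qs}Ric^{\mathfrak{g}_{\mathcal{V}}}_{\bar{\mathcal{D}}\,qs}$ is the full metric trace of this tensor, hence a well-defined smooth function on $\mathfrak{T}M$.

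It remains to confirm that $d(vol)$ is intrinsic, and here I would rely on the transformation rules (\ref{transh}): under a base change $\tilde{x}=\tilde{x}(x)$ with Jacobian $J$, the factors $dx^{1}\wedge\cdots\wedge dx^{m}$, $\theta^{1}\wedge\cdots\wedge\theta^{m}$ and $\kappa_{1}\wedge\cdots\wedge\kappa_{m}$ pick up reciprocal powers of $J$, while $|\det\sigma|^{1/2}$ scales by the compensating power because $\sigma$ is a metric on $p^{-1}(TM)\approx\mathcal{V}_1$; the product is invariant, as noted in the parenthetical computation preceding the statement. Thus $d(vol)$ is a globally defined volume form on the oriented manifold $\mathfrak{T}M$, the integrand is a well-defined $3m$-form built solely from the field and $\varphi$, and, whenever the integral converges, $\mathcal{A}(\mathfrak{g}_{\mathcal{V}},\mathcal{H})$ is a number attached intrinsically to the double field. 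The only genuine obstacle is the tensoriality of the metric-bracket curvature treated in the second paragraph; everything else is invariance bookkeeping.
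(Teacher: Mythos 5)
Your proposal is correct and coincides with the paper's (largely implicit) justification: the paper states this proposition without a formal proof, relying on the parenthetical Jacobian computation for the invariance of $d(vol)$ and on the tensorial character of the deformed curvature asserted in Definition \ref{curbdeform}, whose verification is indeed deferred to \cite{VT}. You have simply made explicit the two invariance checks (scalar curvature and volume form) that the paper leaves to the reader, so the argument matches in substance.
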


Of course, any possible applications of this result should be decided by physics.

{\small Department of Mathematics, University of Haifa, Israel. E-mail: vaisman@math.haifa.ac.il

\begin{thebibliography}{xx}
\bibitem{Bej0} A. Bejancu, Schouten-Van Kampen and Vr\u anceanu connections on foliated manifolds, Anal. Univ. ``Al. I. Cuza", Ia\c si, Mat., 52 (2006), 37-60.
\bibitem{DB} D. Bernard, Sur la g\'eom\'etrie diff\'erentielle des $G$-structures, Ann. Inst. Fourier, Grenoble, 10 (1960), 151-270.
\bibitem{BM} I. Bucataru and R. Miron, Finsler-Lagrange Geometry, Editura Academiei Romane, Bucure\c sti, 2007.	
\bibitem{C} T. Courant, Dirac Manifolds, Transactions Amer. Math. Soc., 319 (1990), 631-661.
\bibitem{Cra} M. Crampin, On horizontal distributions on the tangent bundle of a differentiable manifold, J. London Math. Soc., 2 (1971), 178-182.
\bibitem{CFG} V. Cruceanu, P. Fortuny and P. M. Gadea, A survey on paracomplex geometry, Rocky Mountain J. Math. 26(1) (1996), 83-115.
\bibitem{Galt} M. Gualtieri, Generalized complex geometry, 	 Ph.D. thesis, Univ. Oxford, 2003; arXiv:math.DG/0401221.
\bibitem{Ht1} N. J. Hitchin, Generalized Calabi-Yau manifolds,
Quart. J. Math., 54 (2003), 281-308.
\bibitem{KN} S. Kobayashi and K. Nomizu, Foundations of Differential Geometry, vol. II, Interscience Publ., New York, 1969.
\bibitem{KS} K. Kodaira and D. C. Spencer, Multifoliate structures, Annals of Math., 74 (1961),52-100.
\bibitem{LM} P. Libermann and Ch.-M. Marle, Symplectic Geometry and Analytical Mechanics, D. Reidel Publ. Comp., Dordrecht, Holland, 1987.
\bibitem{Mol} P. Molino, Riemannian Foliations, Progress in Math., 73, Birkh\"auser, Boston, 1988.
\bibitem{MR}  G. Morandi, C. Ferrario, G. Lo Vecchio, G. Marmo, C. Rubano, The inverse problem in the calculus of variations and the geometry of the tangent bundle,
Physics Reports, Volume 188, Issue 3-4, p. 147-284.
\bibitem{V70} I. Vaisman, Almost multi-foliate Riemannian manifolds, Anal. Univ. Ia\c si, Matematic\u a, 16 (1) (1970), 97-104.
\bibitem{V71} I. Vaisman,Vari\'et\'es Riemanniennes Feuillet\'ees, Czechoslovak Math. J., 21 (96) (1971), 46-75.	
\bibitem{V73} I. Vaisman, Cohomology and Differential Forms, M. Dekker, Inc., New York, 1973.	
\bibitem{VLagr} I. Vaisman, $d_{f}$-Cohomology of Lagrangian Foliations, Monatshefte fur Math., 106 (1988), 221-244.
\bibitem{VPs} I. Vaisman, Lectures on the geometry of Poisson
manifolds, Progress in Math., vol. 118, Birkh\"auser Verlag, Boston, 1994.
\bibitem{V3} I. Vaisman, Transversal twistor spaces of foliations,
Ann. Global Anal. and Geom., 19 (2001), 209-234.
\bibitem{VL} I. Vaisman, Lagrange geometry on tangent manifolds, Int. J. of Math. and Math. Sci., 51 (2003), 3241-3266.
\bibitem{VJMP} I. Vaisman, On the geometry of double field theory, J. of Math. Physics, 53, 033509 (2012).
\bibitem{VD} I. Vaisman,Dirac structures on generalized Riemannian manifolds, Revue Roumaine de Math. Pures et Appl., 17(2) (2012), 179-203, arXiv:1105.5908[mathDG].
\bibitem{VT} I. Vaisman, Towards a double field theory on para-Hermitian manifolds, arXiv:1209.0152[mathDG].
\bibitem{Vr} G. Vr\u anceanu, Sur quelques points de la th´eorie des espaces non holonomes, Bull.
Fac. St. Cern¢aut¸i, 5, 177-205, (1931).
\bibitem{Zab} M. Zabzine,
Lectures on Generalized Complex Geometry and Supersymmetry, Archivum Mathematicum (Supplement), 42 (2006), 119-146.
\end{thebibliography}
\end{document}